\documentclass[11pt,twoside]{article}

\setlength{\textwidth}{160mm} \setlength{\textheight}{210mm}
\setlength{\parindent}{8mm} \frenchspacing
\setlength{\oddsidemargin}{0pt} \setlength{\evensidemargin}{0pt}
\thispagestyle{empty}
\usepackage{mathrsfs,amsfonts,amsmath,amssymb}
\usepackage{latexsym}
\usepackage{comment}


\pagestyle{myheadings}
\newtheorem{satz}{Theorem}
\newtheorem{proposition}[satz]{Proposition}
\newtheorem{theorem}[satz]{Theorem}
\newtheorem{lemma}[satz]{Lemma}
\newtheorem{definition}[satz]{Definition}
\newtheorem{corollary}[satz]{Corollary}

\newtheorem{example}[satz]{Example}
\newtheorem{question}[satz]{Question}

\def\Z{\mathbb {Z}}
\def\R{\mathbb {R}}
\def\F{\mathbb {F}}
\def\E{\mathsf{E}}

\def\a{\alpha}

\def\C{\mathbb{C}}

\def\d{\delta}
\def\o{\omega}
\def\({\big (}
\def\){\big )}

\def\G{\Gamma}

\def\le{\leqslant}
\def\ge{\geqslant}
\def\_phi{\varphi}
\def\eps{\varepsilon}
\def\m{\times}
\def\Gr{{\mathbf G}}
\def\FF{\widehat}

\def\ov{\overline}
\def\Spec{{\rm Spec\,}}

\def\la{\lambda}
\def\D{\Delta}

\def\C{\mathbb{C}}

\def\cov{\mathsf{cov}}

\def\cov{{\rm cov}}
\def\un{{\rm un}}
\def\U{{\mathcal U}}

\newcommand{\bp}{\bigskip}

\author{I.D. Shkredov}
\title{
On universal sets and sumsets
}
\date{}
\begin{document}
	\maketitle


\begin{center}
	Annotation.
\end{center}

{\it \small
    We study the concept of universal sets from the additive--combinatorial point of view. 
    Among other results we obtain some applications of this type of uniformity to sets avoiding solutions to linear equations, and 
    get 
    an optimal upper bound for the covering number of general  sumsets.
}
\\

\section{Introduction}

Let $\Gr$ be an abelian group.
Given two sets $A,B\subseteq \Gr$, define  
the {\it sumset} 
of $A$ and $B$ as 
$$A+B:=\{a+b ~:~ a\in{A},\,b\in{B}\}\,.$$
In a similar way we define the {\it difference sets} and the {\it higher sumsets}, e.g., $2A-A$ is $A+A-A$.
The study of the structure of sumsets is a fundamental  problem in  classical additive combinatorics \cite{TV}, and this relatively new branch of mathematics provides  us with a number of excellent results on this object. 
For example, 
the Pl\"unnecke--Ruzsa inequality (see, e.g., \cite{Ruzsa_Plun} or \cite{TV}) says that for any positive integers $n$ and $m$ 
the following holds 
\begin{equation}\label{f:Pl-R} 
    |nA-mA| \le \left( \frac{|A+A|}{|A|} \right)^{n+m} \cdot |A| \,.
\end{equation} 
Thus, bound \eqref{f:Pl-R} connects the cardinality of the 
original 
set $A$ and its higher sumsets $nA-mA$. 
In this paper we do not consider cardinalities of sumsets, but rather deal with a new characteristic of a numerical set, namely universality and study how this characteristic changes from addition.
Namely, a set $A \subseteq \Gr$ is called the  {\it $k$--universal} set if  for any $x_1,\dots,x_k \in \Gr$ there exists $s\in \Gr$ such that $x_1+s,\dots,x_k+s \in A$.
The term ``universal set'' was coined in \cite{alon2009discrete}  and 
some applications to the Kakeya problem can be found in \cite{KLSS}.
If we denote the maximal $k$ with the above property  by $\un (A)$, then our first result says that 
\begin{equation}\label{f:un(A+B)_intr}
    \un (A+B) \ge \un (A) \un (B) \,.
\end{equation}
The estimate \eqref{f:un(A+B)_intr} is optimal and it shows, in particular, that the characteristic $\un (\cdot)$ grows  rapidly upon addition. 
A ``statistical'' or ``quantitative'' version of inequality \eqref{f:un(A+B)_intr} can be found in Section \ref{sec:universal} and can be considered as a generalization of the Ruzsa triangle inequality \cite{TV}.
The author hopes that such characteristics are interesting in its own right and can find application in further additive--combinatorial problems.
In the proofs we make extensive use of the concept of higher sumsets/higher energies, see \cite{SS_higher} and therefore 
the main idea 
of obtaining inequalities as \eqref{f:un(A+B)_intr} 
is to move into higher dimensions.

Any $k$--universal set has some random properties; universality  can be thought of as a combinatorial version of ordinary uniform sets (see, e.g., \cite{TV}).
For example, we show in Section \ref{sec:covering} that any $k$--universal subset $U$ of a finite group $\Gr$ intersects an arbitrary sumset $A+B$, where $|A| =\a |\Gr|$, $|B| = \beta |\Gr|$ and $\a \gg \frac{\log (1/\beta)}{k}$, say.
In particular, $U$ intersects any sufficiently long arithmetic progression or Bohr set.
Even stronger properties of universal sets can be found in Sections \ref{sec:Ek},  \ref{sec:covering} and \ref{sec:concluding}.
On the other hand, universality is much weaker property than classical uniformity (for example, {\it any} set $U\subseteq \Gr$ of size $|U| > (1-1/k) |\Gr|$ is $k$--universal) and  this gives the concept considerable flexibility. Using this flexibility, we obtain several applications.


In our first application we study sets $A\subseteq \Gr$, $|\Gr| <\infty$, $|A|=\d |\Gr|$ avoiding solutions to general linear equations 
\begin{equation}\label{f:linear_eq_intr}
    \a_1 x_1 + \dots + \a_n x_n = \beta \,,
\end{equation}
where  $n\ge 3$ and $\a_1,\dots,\a_n$ are coprime to $|\Gr|$. 
Denote by $f_A (x)$ the balanced function of the set $A$ (all required definitions can be found in Section  \ref{sec:def}). 
In the case of $n=3$, in the first part of the Kelly--Meka  proof \cite{kelley2023strong} it was shown that the so--called $\E_k$--norm $f_A$, $k=O(\log (2/\d))$ controls the number of solutions to the equation \eqref{f:linear_eq_intr}
in the sense that if $\| f_A\|_{\E_k}$ is small, then \eqref{f:linear_eq_intr} has $\Omega(\d^3 |\Gr|^2)$ solutions. For larger $n$ equation \eqref{f:linear_eq_intr} was studied in \cite{Gijswijt_systems}, \cite{Kosciuszko_eq}, \cite{Schoen_convex_eq} and in other papers.
Using the universality one can, in particular,  easily  show that for large $n$ one needs $\E_{k_*}$--norm of $f_A$ with much smaller $k_* = k_*(n)$ to control  the number of the solutions to the equation \eqref{f:linear_eq_intr}.

\begin{theorem}
    Let $\Gr$ be a finite abelian group, $n\ge 4$ be an integer, $A\subseteq \Gr$ be a set, $|A| = \d |\Gr|$ and $A$ has no solutions to the equation 
\begin{equation}\label{f:E_k_linear_new_cond_intr}
    \a_1 x_1 + \dots + \a_n x_n = \beta \,, \quad \quad x_j \in A,\, j\in [n] \,,
\end{equation}
    where $\a_1,\dots,\a_n$ are coprime to $|\Gr|$.
    Also, 
    let $\eps \in (0,1/8]$ be any number.
    Then there is $l \le (3\log (1/\d) )^{\frac{1}{[n/2]-1}}+1$ such that 
\begin{equation}\label{f:E_k_linear_new_intr}
    \|f_A \|^{2l}_{\E_l} \ge \eps^{2l} \d^{2l} |\Gr|^{l+1} \,.
\end{equation}
\label{t:E_k_linear_new_intr}
\end{theorem}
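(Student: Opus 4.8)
The plan is to prove the contrapositive. Assume \eqref{f:E_k_linear_new_intr} fails for \emph{every} integer $l$ with $1\le l\le L$, where $L:=(3\log(1/\d))^{1/([n/2]-1)}+1$ (so $L$ makes sense, as $[n/2]-1\ge 1$ for $n\ge 4$); I will then exhibit a solution of \eqref{f:E_k_linear_new_cond_intr} inside $A$. If $\d$ is bounded away from $0$ the statement is elementary — $L$ is then an absolute constant, and the crude Fourier estimate for the solution count of \eqref{f:E_k_linear_new_cond_intr} (write each indicator as $\d+f_A$ and bound the error by $\|\widehat{f_A}\|_\infty$, which is small once \eqref{f:E_k_linear_new_intr} fails for $l=1$) already gives a contradiction — so from now on $\d$ is small. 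This is exactly where the extra variables help: for $n=3$ one genuinely needs $l$ of order $\log(1/\d)$ as in Kelley--Meka, but for $n\ge4$ universality does the work.

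Suppose then $\|f_A\|_{\E_l}^{2l}<\eps^{2l}\d^{2l}|\Gr|^{l+1}$ for all $l\le L$. Two consequences, both from Sections~\ref{sec:universal}--\ref{sec:Ek}, will be used. First, the simultaneous smallness of $\|f_A\|_{\E_l}$ for $l\le L$ is precisely the hypothesis of the ``statistical'' form of \eqref{f:un(A+B)_intr}: it forces $\un(A)\ge L$ (the $l$--universality counting functions $(x_1,\dots,x_l)\mapsto\sum_s\prod_{i\le l}A(x_i+s)$ are uniformly concentrated near their mean for $l\le L$). Second, the Ruzsa--type energy inequality mentioned in the Introduction forces every two--dilate sumset $\a_iA+\a_jA$ to have density at least an absolute constant $c_0$; moreover, invoking $\E_l$ for larger $l\le L$, one may take $c_0>1/3$ (in fact arbitrarily close to $1$).

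Now split the variables of \eqref{f:E_k_linear_new_cond_intr}: let $V:=\a_1A+\dots+\a_{[n/2]-1}A$ (a $([n/2]-1)$--fold dilate sum) and $W:=\a_{[n/2]}A+\dots+\a_nA$ (the remaining $\lc n/2\rc+1\ge3$ dilates), so that \eqref{f:E_k_linear_new_cond_intr} is solvable in $A$ iff $\b\in V+W$, i.e.\ $(\b-V)\cap W\neq\emptyset$. Since each $\a_i$ is coprime to $|\Gr|$, multiplication by $\a_i$ is an automorphism of $\Gr$, so $\un(\a_iA)=\un(A)$; hence by the multiplicativity \eqref{f:un(A+B)_intr} and the invariance of $\un(\cdot)$ under translations and reflections,
\[
\un(\b-V)=\un(V)\ge\un(A)^{[n/2]-1}\ge L^{[n/2]-1}\ge3\log(1/\d).
\]
Write $W=X+Y$ with $Y:=\a_nA$ (density $\d$) and $X:=\a_{[n/2]}A+\dots+\a_{n-1}A$; as $X$ contains a translate of a two--dilate sumset, $|X|\ge c_0|\Gr|\ge|Y|$. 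The covering result of Section~\ref{sec:covering} says that a $k$--universal set meets $X+Y$ whenever $|X|/|\Gr|\gg\log(|\Gr|/|Y|)/k$; here the left side is $\ge c_0>1/3$ while the right side equals $\log(1/\d)/\un(\b-V)\le1/3$, so the hypothesis holds with room to spare (this is where the constant $3$, $c_0>1/3$ and $\eps\le1/8$ provide the slack), giving $(\b-V)\cap W\neq\emptyset$. Thus \eqref{f:E_k_linear_new_cond_intr} has a solution in $A$, a contradiction.

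The crux is the second paragraph: deriving both $\un(A)\ge L$ and $|\a_iA+\a_jA|\ge c_0|\Gr|$ from the smallness of the norms $\|f_A\|_{\E_l}$, $l\le L$, which is where one ``moves into higher dimensions''. The one genuinely delicate point there is that passing from $A$ to the $([n/2]-1)$--fold sumset $V$ costs only the factor $[n/2]-1$ in the exponent of $\un$; this is precisely why $l$ in \eqref{f:E_k_linear_new_intr} can be taken as small as $(3\log(1/\d))^{1/([n/2]-1)}$, and why the bound deteriorates when $[n/2]-1$ is small. The rest is routine: the odd--$n$ case (one block gains a variable, which only helps), integer roundings absorbed into the ``$+1$'', the covering and energy constants swallowed by the fixed ``$3$'', and the regime of $\d$ bounded away from $0$ handled by the elementary estimate above.
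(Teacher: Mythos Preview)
Your overall strategy --- argue by contraposition, convert small $\E_l$-norms into universality of $A$, amplify it through the $([n/2]-1)$-fold sumset via \eqref{f:un(A+B)_intr}, then force an intersection --- is exactly the paper's. But two steps in your second paragraph fail. First, the hypothesis does \emph{not} give $\un(A)\ge L$: concentration of the counting function $(x_1,\dots,x_l)\mapsto\sum_s\prod_iA(x_i+s)$ near its mean in $L^2$ says nothing about pointwise positivity, so strict $L$-universality need not hold. What one actually obtains (this is Lemma~\ref{l:uniformity_implies_universality}) is only the quantitative bound $\ov{\U}_L(A)>1/(1+\eps^2)>7/8$, and one must carry $\ov{\U}$ rather than $\un$ through the rest of the argument, replacing \eqref{f:un(A+B)_intr} by the quantitative multiplicativity of Corollary~\ref{c:A-B_universal} and the strict-universality covering bound by the expansion estimate \eqref{f:gen_triangle'}.

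Second, your claim that every two-dilate sumset $\a_iA+\a_jA$ has density exceeding $1/3$ is unjustified and in fact false for $n\ge6$: then $L\asymp(\log(1/\d))^{1/([n/2]-1)}\ll\log(1/\d)$, so even $\ov{\U}_L(A)\approx1$ only yields $|\a_iA+\a_jA|\ge\d^{1/L}\,\ov{\U}_L(A)\,N$, which is $o(N)$ for small $\d$. The cure is to bound $|X|$ using the \emph{full} $\lceil n/2\rceil$-fold sum, not a two-fold piece of it; the paper does this by splitting symmetrically and applying the iterated-$\ov{\U}$ estimate \eqref{f:A-B_universal_sumsets} on \emph{both} halves to push each above $N/2$ (Proposition~\ref{p:linear_eq}, proof of \eqref{f:linear_eq3}). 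A minor slip as well: $\|f_A\|_{\E_1}=0$ identically, so ``$l=1$'' in your opening paragraph gives nothing --- you want $l=2$ there.
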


The second area of our applications concerns the so--called covering numbers.
Let $\Gr$ be a finite abelian group and $A\subseteq \Gr$ be  a set. 
The {\it covering number} of $A$ is defined as 
\begin{equation}\label{def:covering}
    \cov (A) = \min_{X\subseteq \Gr} \{ |X| ~:~ A+X = \Gr \} \,.
\end{equation}
In other words, the covering number of $A$ is the minimum  number of translates
$A$ required to cover the group. 
In such a generalized form this concept can be found in
\cite{bollobas2011covering} (see also previous papers \cite{newman1967complements}, \cite{schmidt2003complementary}), which contains many references to this issue. The case of an infinite group $\Gr$ (e.g. $\Z^d$ or $\R^d$) is different, and the interested reader may refer to, for example, the classic book \cite{rogers1964packing}.


In this generality, the covering number $\cov(A)$ can be considered as a characteristic of the randomness of $A$. 
Indeed,   for any set $A$ one has $\cov(A) \ge |\Gr|/|A|$ and the last bound is attained if, say, $\Gr = \Z/N\Z$, $|A|$ divides $N$ and $A$ is an arithmetic progression $\{1,\dots, |A|\}$.  
On the other hand, 
using the simplest greedy algorithm,  one can show (see, e.g., \cite[Corollary 3.2]{bollobas2011covering}) that 
\begin{equation}\label{f:basic_bounds}
    \frac{|\Gr|}{|A|} \le \cov (A) \le \frac{|\Gr|}{|A|} \left( \log |A| +1 \right) < \frac{|\Gr|}{|A|} \log |\Gr| 
\,.
\end{equation}
The upper bound in \eqref{f:basic_bounds} is tight and attained at a random set $A$. 
Thus one would expect that $\cov (A)$ should be close to the lower bound in \eqref{f:basic_bounds} for structured sets as arithmetic progressions and should be close to the upper bound if $A$ has some random properties.


A classic result of additive combinatorics, namely the Ruzsa covering lemma (see \cite{ruzsa1999analog} and \cite[Lemma 2.14]{TV}, confirms this heuristic and gives us a good upper bound for the covering number of difference sets considered in additive theory as a typical example of structural sets.

\begin{theorem}
    Let $\Gr$ be a finite abelian group, $A \subseteq \Gr$ be a set, $|A|=\a |\Gr|$.
    Then
\begin{equation}\label{f:Ruzsa_intr}
    \cov (A-A) \le \a^{-1} \,.
\end{equation}
\label{t:Ruzsa_intr}
\end{theorem}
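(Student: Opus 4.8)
The plan is to use the standard maximal-set argument. First I would choose $X\subseteq \Gr$ to be a maximal subset with the property that the translates $\{x+A : x\in X\}$ are pairwise disjoint; such a maximal set exists because $\Gr$ is finite. Since these $|X|$ copies of $A$ are disjoint subsets of $\Gr$, we immediately get $|X|\,|A| \le |\Gr|$, hence $|X| \le |\Gr|/|A| = \a^{-1}$. It therefore suffices to show that this same set $X$ satisfies $(A-A)+X = \Gr$, because then $\cov(A-A) \le |X| \le \a^{-1}$, which is exactly \eqref{f:Ruzsa_intr}.

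For the covering claim, I would argue as follows. Fix an arbitrary $g\in \Gr$; I want to write $g = (a-a') + x$ for some $a,a'\in A$ and $x\in X$, equivalently $g - x \in A - A$ for some $x\in X$. Consider the translate $g + A$. If $g+A$ were disjoint from $x+A$ for every $x\in X$, then we could enlarge $X$ by adjoining a suitable point, contradicting maximality --- one has to be slightly careful here: maximality of $X$ as a family of disjoint translates means we cannot add a new center $y$ with $y+A$ disjoint from all $x+A$, $x\in X$; taking $y=g$ shows $g+A$ must meet some $x+A$. (If $g\in X$ already there is nothing to prove for that step, so assume $g\notin X$.) Hence there exist $a,a'\in A$ and $x\in X$ with $g+a = x+a'$, i.e. $g = x + (a'-a) \in X + (A-A)$. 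Since $g$ was arbitrary, $(A-A)+X = \Gr$.

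Combining the two halves, $X$ is a set of size at most $\a^{-1}$ with $(A-A)+X = \Gr$, so by the definition \eqref{def:covering} of the covering number we obtain $\cov(A-A)\le |X|\le \a^{-1}$, completing the proof.

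The main (and really the only) subtlety is the maximality step: one must set up the extremal problem so that the obstruction to enlarging $X$ is precisely the statement $g+A$ meets some $x+A$, and handle the trivial case $g\in X$ separately. Everything else is a one-line counting bound. I do not expect any genuine obstacle; this is a textbook pigeonhole/greedy argument, and the only thing worth stating cleanly is which maximal object one takes (a maximal packing of translates of $A$, not a maximal covering).
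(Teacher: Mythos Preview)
Your argument is correct and is exactly the classical maximal-packing proof of the Ruzsa covering lemma. The paper does not give its own proof of this theorem; it simply states it as a known result with references to \cite{ruzsa1999analog} and \cite[Lemma 2.14]{TV}, and your proposal is precisely that standard argument.
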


We obtain a 
generalization of Theorem \ref{t:Ruzsa_intr}, namely, 
we show that a similar result takes place for an 
arbitrary sumset $A+B$. The method of the proof differs  from \cite{ruzsa1999analog}.

\begin{theorem}
    Let $\Gr$ be a finite abelian group, $A,B \subseteq \Gr$ be sets, $|A| = \a |\Gr|$, and $|B| = \beta |\Gr|$.
    Then 
\begin{equation}\label{f:cov_AB_intr}
    \cov (A+B) \le \frac{1}{\a} \log \frac{1}{\beta} + 1 \,.
\end{equation}
\label{t:cov_AB_intr}
\end{theorem}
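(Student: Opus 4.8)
The plan is to prove Theorem~\ref{t:cov_AB_intr} by a greedy/probabilistic covering argument that exploits the asymmetry between $A$ and $B$: we cover $\Gr$ by translates of $A+B$, but we use the freedom to first translate $B$ by elements of $A$ (or rather, to pick translates $A+b$ for cleverly chosen $b$) so that each new translate of the sumset kills a constant fraction of the still-uncovered elements. Concretely, suppose after choosing $x_1,\dots,x_t$ we have an uncovered set $R_t := \Gr \s ((A+B)+\{x_1,\dots,x_t\})$; I want to find a single new shift $x_{t+1}$ (which I will take of the form $b - r$ for suitable $b\in B$, $r\in R_t$, or more symmetrically an averaging over $A$) such that $|R_{t+1}| \le (1-\a)|R_t|$. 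Averaging over $x \in \Gr$, the expected number of elements of $R_t$ that land in $(A+B)+x$ equals $\sum_{r\in R_t}\mathbb{P}_x(r \in A+B+x)$. For a fixed $b \in B$, the event $r \in A + b + x$ holds for exactly $|A|$ values of $x$, so choosing $x$ uniformly gives probability $\a$ that $r \in A+b+x$; hence there is a choice of $x$ covering at least $\a |R_t|$ new elements of $R_t$. This is the core observation.

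Running this greedily, starting from $R_0 = \Gr$, after $t$ steps we have $|R_t| \le (1-\a)^t |\Gr| \le e^{-\a t}|\Gr|$. We stop once $|R_t| < 1$, i.e. once $e^{-\a t}|\Gr| < 1$; but we must be a bit more careful because the final bound should involve $\log(1/\b)$, not $\log|\Gr|$, which is exactly where $B$ enters. The refinement: once $|R_t| \le |B| = \b|\Gr|$, I switch strategies. At that point $R_t$ is a set of size at most $\b|\Gr| = |B|$, and I claim a \emph{single} additional translate of $A+B$ suffices to cover all of $R_t$ --- indeed, more simply, at this stage I can finish off $R_t$ using $|R_t|$ translates of $A$ trivially, but that is too many; instead the right move is: since $|R_t| \le |B|$, there exists (by a counting/pigeonhole or Ruzsa-type argument) a translate of $B$, say $B + y$, with $|R_t \cap (B+y)| $ comparable to $|R_t||B|/|\Gr|$... that is still not enough in one shot. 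The cleaner route is to note that covering $R_t$ by translates of $A+B \supseteq A + b_0$ for a single fixed $b_0 \in B$ is just covering $R_t$ by translates of $A$, which by the greedy bound \eqref{f:basic_bounds} applied to $A$ acting on the ambient set $R_t$ of size $\le \b|\Gr|$ costs at most $\frac{|\Gr|}{|A|}\log|R_t| + 1 \le \frac1\a(\log(\b|\Gr|))$-ish — still has a $\log|\Gr|$.

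So the correct bookkeeping is to run the $(1-\a)$-shrinking \emph{only} until $|R_t| < 1$ but to observe that the first $\lceil \frac1\a \log\frac1\b \rceil$ steps already shrink $\Gr$ down to size $< \b |\Gr| = |B| \le$ (something), and then handle the residual with one more translate by using that $R_t$ of size $< |B|$ translated appropriately is absorbed: pick any $r \in R_t$; is $r$ in $A + B + x$ for a suitable single $x$? We need $\{r\} \s (A+B)+x$ for \emph{all} remaining $r$ simultaneously, i.e. $R_t \s (A+B)+x$, i.e. $R_t - x \s A+B$; since $A+B$ is large ($|A+B| \ge \max(|A|,|B|)$ always, and more relevantly we want $|A+B|$ close to $|\Gr|$)... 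Here is the actual clean finish: if $|R_t| \le |B|$ then $|R_t| + |\Gr \s (A+B)| $ versus $|\Gr|$ — we have $|\Gr \s (A+B)| = |\Gr| - |A+B|$, and by the trivial bound combined with the hypothesis we do \emph{not} in general know $|A+B|$ is huge. Thus the genuine finish must be: continue the greedy shrink but note $\log|\Gr| = \log|B| + \log(1/\b) = \log(\b|\Gr|)+\log(1/\b)$, and crucially the shrink from $|\Gr|$ down to $|B|$ takes $\le \frac1\a\log\frac1\b$ steps, while the shrink from $|B|$ down to $<1$ — this is the part I must absorb into the ``$+1$''. For that I use: when $|R_t| \le |B|$, there is a \emph{single} $x$ with $R_t \s A+B+x$. \textbf{Proof of this sub-claim:} count pairs — for random $x$, $\mathbb{E}_x |R_t \s (A+B+x)| = \sum_{r \in R_t} \mathbb{P}_x(r \notin A+B+x) = |R_t|(1 - |A+B|/|\Gr|)$; this is $<1$ precisely when $|R_t| < |\Gr|/(|\Gr|-|A+B|)$, which needs $|A+B|$ close to full and is \emph{not} guaranteed. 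So instead I prove the sub-claim via the \emph{asymmetric} count over $x$ of the form we can still choose: we are free to choose $x$, and we want $R_t - x \subseteq A+B$; equivalently $x \in \bigcap_{r\in R_t}(r - (A+B)) = \bigcap_{r\in R_t}(r-A-B)$. By inclusion–exclusion / Fourier, or just: $|\bigcap_{r}(r-A-B)| \ge |\Gr| - \sum_r |\Gr \s (r-A-B)| = |\Gr| - |R_t|(|\Gr|-|A+B|)$, positive iff $|R_t| < |\Gr|/(|\Gr|-|A+B|)$.

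I expect the \textbf{main obstacle} to be exactly this endgame: making the residual $R_t$ (of size $\le |B|$) vanish in \emph{one} extra translate. The resolution I would pursue is to not insist on size exactly $|B|$ as the switchover, but to run the greedy $(1-\a)$-step a number $t_0 := \lceil \frac1\a \log\frac1\b\rceil$ of times, giving $|R_{t_0}| \le e^{-\a t_0}|\Gr| \le \b|\Gr|\cdot$ wait that gives $\le |\Gr|/|\b|^{-1}\cdot$ — recompute: $e^{-\a t_0} \le e^{-\log(1/\b)} = \b$, so $|R_{t_0}| \le \b|\Gr| = |B|$. Then I claim $R_{t_0}$ is covered by one translate of $A+B$: apply the averaging bound above but \emph{with the roles arranged so the relevant quantity is $|A|$ not $|\Gr|-|A+B|$}, namely — for a single \emph{fixed} $b_0\in B$, we have $A + b_0 \s A+B$, so it suffices that $R_{t_0} - x \s A$ for some $x$, i.e. $x \in \bigcap_{r\in R_{t_0}}(r-A)$, whose size is $\ge |\Gr| - |R_{t_0}|(|\Gr|-|A|) = |\Gr| - |R_{t_0}||\Gr|(1-\a)$; this is positive iff $|R_{t_0}|(1-\a) < 1$, i.e. essentially $|R_{t_0}| = 1$, which is again not what we have. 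Since every naive one-shot finish fails, the honest plan is: the ``$+1$'' in \eqref{f:cov_AB_intr} comes not from a magic single translate but from rounding, and the true argument is the pure greedy run to $|R_t| < 1$ giving $\cov(A+B) \le \frac1\a\log|\Gr| + 1$ — which is \emph{weaker} than claimed. Therefore the real proof must use universality: by \eqref{f:un(A+B)_intr}, $\un(A+B)\ge \un(A)\un(B)$, and the intersection property quoted in the introduction ("any $k$-universal $U$ intersects $A+B$ when $\a \gg \log(1/\b)/k$"), applied \emph{in reverse} — i.e. we build the cover $X$ greedily and at each stage the uncovered set, if large, must be hit — combined with the $\E_k$-type estimates of Section~\ref{sec:Ek}. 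So my final plan is: \emph{(i)} reduce to showing that for any non-empty $R \s \Gr$ there is $x$ with $|R \cap (A+B+x)| \ge \a|R|$ when $|R|$ is not too small, via the one-$b_0$-fixed averaging ($\mathbb{E}_x|R\cap(A+b_0+x)| = \a|R|$, done); \emph{(ii)} iterate to get $|R_t|\le(1-\a)^t|\Gr|$; \emph{(iii)} stop at $t_0 = \lceil\frac1\a\log\frac1\b\rceil$ with $|R_{t_0}|\le|B|$; \emph{(iv)} invoke the universality/intersection machinery of the introduction to cover the leftover $R_{t_0}$ of size $\le|B|=\b|\Gr|$ with a \emph{single} translate — this last step being the crux and the place where the method "differs from Ruzsa," presumably by showing $A+B+x \supseteq R_{t_0}$ for some $x$ using a second-moment/additive-energy argument rather than a first-moment one.

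\medskip

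\noindent\emph{Summary of the plan.} Run a greedy cover using translates $A + b_0 + x$ for a fixed $b_0 \in B$: the first-moment bound $\mathbb{E}_{x \in \Gr}\,|R \cap (A+b_0+x)| = \a|R|$ gives a single $x$ shrinking the uncovered set $R$ by a factor $(1-\a)$. After $t_0 := \lceil \a^{-1}\log\b^{-1}\rceil$ steps the uncovered set has size $\le \b|\Gr| = |B|$. The remaining obstacle — and the heart of the proof — is to finish this residual with a single additional translate of $A+B$ (so that $\cov(A+B) \le t_0 + 1$); I expect this to require a genuinely two-sided argument (using both $A$ and $B$, e.g. a second-moment or additive-energy estimate on $A+B$, or the universality bound \eqref{f:un(A+B)_intr} and the intersection property stated in the introduction) rather than the one-sided first-moment bound, which is exactly why the method departs from Ruzsa's.
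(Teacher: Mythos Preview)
Your greedy argument has a genuine gap that you yourself identify but do not close: after $t_0 = \lceil \alpha^{-1}\log(1/\beta)\rceil$ steps you have $|R_{t_0}| \le |B|$, and you need to cover $R_{t_0}$ with a \emph{single} further translate of $A+B$. As you correctly compute, the first-moment bound only guarantees this when $|R_{t_0}| < N/(N-|A+B|)$, and there is no control on $|A+B|$ in the hypotheses. None of your suggested fixes (second moment, energy, ``universality machinery'') is carried out, and in fact I do not see how to finish the greedy scheme at all: the greedy step you run uses only a single fixed $b_0\in B$, so it is really just covering by translates of $A$, and $B$ never enters except through the stopping time. The ``$+1$'' in the theorem is not a rounding artifact you can absorb --- it encodes a genuinely two-sided phenomenon that the one-sided greedy cannot see.

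The paper's proof is entirely different and avoids greedy covering altogether. It passes to the complement: set $\Omega = (A+B)^c$ and $k = \cov(A+B)-1$. By the basic identity $\cov(S) = \un(S^c)+1$, the set $\Omega$ is $k$-universal. Universal sets have the expansion property $|\Omega - B| \ge N(|B|/N)^{1/k} = N\beta^{1/k} > N(1-k^{-1}\log(1/\beta))$ (this is inequality \eqref{f:universal_addition}, which follows from the higher-dimensional triangle inequality). On the other hand, $\Omega - B$ is disjoint from $A$ (if $\omega - b = a$ then $\omega = a+b \in A+B$), so $|\Omega - B| \le N - |A| = N(1-\alpha)$. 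Combining the two bounds forces $k^{-1}\log(1/\beta) \ge \alpha$, i.e.\ $k \le \alpha^{-1}\log(1/\beta)$, and the theorem follows. Notice how $B$ enters: not through a single element $b_0$, but through the full set $-B$ added to the complement $\Omega$; this is exactly the ``two-sided'' ingredient your approach was missing.
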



Bound \eqref{f:cov_AB_intr} in Theorem \ref{t:cov_AB_intr} is absolutely tight, see Section \ref{sec:covering}.  It is also interesting to note that while an infinite analogue of Theorem \ref{t:Ruzsa_intr} takes place, see  \cite{furstenberg1977ergodic}, \cite{stewart1979infinite} but the correspondent infinite version of Theorem \ref{t:cov_AB_intr} for infinite sets fails (see, e.g., \cite{stewart1979infinite}).

\bp

Our main new observation is the following: the {\it complement} $A^c$ of any set $A$ with large $\cov(A)$ is a universal set, namely, we have the basic formula 
\begin{equation}\label{f:cov_un}
    \cov (A) = \un (A^c) + 1 \,,
\end{equation}
and therefore $A^c$ must have some random/repulsive properties. On the other hand, the complements to sumsets of dense sets  are known to be additively rich sets (see, e.g., \cite{croot2012some}, \cite{Semchankau_wrappers}, \cite{SSS}) and this contradiction  gives us that the required upper bound for the covering number of any sumset.

In our last result, the group $\Gr$ 
coincides with 
the prime field $\F_p$, and we study two covering numbers $\cov^{+}/\cov^\times$ with respect to addition/multiplication  in $\F_p$. 
As the reader can see we are able to calculate the covering numbers for the
sumsets/product sets of almost all possible  types in $\F_p$. 
Given sets $A,B \subseteq \Gr$  and $\eps \in (0,1)$ we write $A+_\eps B$ for the set of $x\in \Gr$, having at least $\eps |\Gr|$ representations as sum of $a+b$, $a\in A$, $b\in B$.

\begin{theorem}
\label{t:table_intr}
    Let $p$ be a prime number, and $A, B\subset \F_p$ be sets.
    Suppose that $A,B$ and all sets below have positive density.
    Write $L$ for $\log p$. Then
\[
\]    
\begin{tabular}{|c|c|c|c|c|c|c|c|c|} 
\hline  
\textbf{}  & $A-A$ & $(A-A)^c$ & $A+B$ & $(A+B)^c$ & $AB$ & $(AB)^c$ & $(A+B)^{-1}$ & $((A+B)^{-1})^c$\\ 
\hline  
$\cov^{+}$ & $O(1)$ & $\forall_{O(1)}$ & $O(1)$ & $\forall_{O(1)}$ & $?_{\Omega(L)}$ & $\Omega(L)$ & $?_{\Omega(L)}$ & $\Omega(L)$ \\  
\hline  
$\cov^{\times}$ & $O(1)$ & $\Omega(L)$ & $\forall$ & $\forall$ & $O(1)$ & $\forall_{O(1)}$ & $\forall$ & 
$\forall$ 
\\  
\hline  
\end{tabular}\\ 
    Here the symbol $\forall$ means that the correspondent covering number can be 
    $O(1)$ or $\Omega (L)$.\\ 
    The symbol $\forall_{O(1)}$ means that the covering number of  
    $(A+B)^c$, say, can be arbitrary but the covering number of $(A+_{\Omega(1)} B)^c$ is $O(1)$.\\  
    Similarly, the question mark $?_{\Omega(L)}$ means that the covering number of  
    $AB$, say, is unknown  but the covering number of $A \cdot_{\Omega(1)} B$ is $\Omega(L)$.  
\end{theorem}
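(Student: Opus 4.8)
The plan is to regard the table as a catalogue and to verify each entry by reducing it to one of a few recurring moves, using throughout the identity \eqref{f:cov_un}, $\cov(S)=\un(S^{c})+1$, in both its additive and multiplicative incarnations. Three entries are immediate: $\cov^{+}(A-A)=O(1)$ is Theorem~\ref{t:Ruzsa_intr}, $\cov^{+}(A+B)=O(1)$ is Theorem~\ref{t:cov_AB_intr} (both with $\alpha,\beta=\Omega(1)$), and $\cov^{\times}(AB)=O(1)$ is Theorem~\ref{t:cov_AB_intr} applied verbatim in the cyclic group $\F_p^{\times}\cong\Z/(p-1)\Z$. For $\cov^{\times}(A-A)=O(1)$ I use \eqref{f:cov_un}: it suffices that $(A-A)^{c}$ fails to be $k$-multiplicatively-universal once $k+1>1/\delta$, and indeed the witness tuple $x_{i}=i$ would produce a nonzero $s$ with $si\notin A-A$ for all $i\le k$, i.e.\ $s^{-1}A$ would be $k$-separated modulo $p$, hence of size $\le p/(k+1)<\delta p$ — a contradiction.

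The $\Omega(L)$ entries all come from the opposite use of \eqref{f:cov_un}: one must show the set being complemented is \emph{forced} to be highly (additively or multiplicatively) uniform. For $\cov^{\times}((A-A)^{c})=\Omega(L)$: by Bogolyubov's lemma $A-A$ contains a Bohr set $B(\Gamma,1/4)$ with $|\Gamma|\le\delta^{-1}$, and the simultaneous inequalities $\|\gamma sx_{i}/p\|<1/4$ (over $\gamma\in\Gamma$, $i\le k$) are solvable for a nonzero $s$ by multidimensional Dirichlet as long as $k=\Omega(L/|\Gamma|)$, so $A-A$ is $\Omega(L)$-multiplicatively-universal and $\cov^{\times}((A-A)^{c})=\un^{\times}(A-A)+1=\Omega(L)$. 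For the additive cells $(AB)^{c}$ and $((A+B)^{-1})^{c}$ the same scheme runs with ``additively uniform'' in place of the Bohr-set input: once $(AB)^{c}$ is assumed to have positive density, Kneser's theorem (equivalently the known additive richness of complements of product sets cited in the introduction) forces $AB$ into a bounded union of cosets of a multiplicative subgroup or into a short geometric progression, so $(AB)^{c}$ is additively uniform of density bounded away from $0$ and $1$ (the latter because $|AB|\ge|A|=\Omega(p)$), hence $\cov^{+}((AB)^{c})=\Omega(L)$; for the reciprocal columns one uses instead the Weil/Kloosterman bound that $\{r^{-1}:r\in I\}$ is additively uniform for an interval $I$. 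The ``?'' on $\cov^{+}(AB)$ and $\cov^{+}((A+B)^{-1})$ records only that one cannot decide whether $AB$ (resp.\ $(A+B)^{-1}$) is all of $\F_p$, so the unconditional $\Omega(L)$ lower bound applies solely to the popular versions $A\cdot_{\Omega(1)}B$.

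The $\forall_{O(1)}$ entries — $(A+B)^{c}$, $(A-A)^{c}$ additively, $(AB)^{c}$ multiplicatively — split into a robust part and examples. Robust part: for $\eps=\Omega(1)$ one shows, again via \eqref{f:cov_un} together with the structure of robust sumsets (non-emptiness of $(A+_{\eps}B)^{c}$ for such $\eps$ forces one of $A,B$ to be concentrated, so $(A+_{\eps}B)^{c}$ contains an arithmetic progression of positive density), that $\cov^{+}((A+_{\eps}B)^{c})=O(1)$ — indeed the greedy bound \eqref{f:basic_bounds} applied to that progression already gives $O(1)$ — and likewise for the other two. Examples: every cell marked $\forall$, and the non-robust half of each $\forall_{O(1)}$, is settled by exhibiting one structured instance with $\cov=O(1)$ and one pseudorandom instance with $\cov=\Omega(L)$, using only three building blocks: arithmetic and geometric progressions — one symmetric about the neutral element has $\cov$ of order the reciprocal of its density by Dirichlet's theorem, while its complement (a ``far'' progression) has $\cov=\Omega(L)$ on applying that same bound to \emph{its} complement through \eqref{f:cov_un}; quadratic residues and bounded-index multiplicative subgroups, which are additively uniform by Gauss/Weil; and reciprocals of intervals, additively uniform by Weil.

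I expect the main obstacle to be the $\Omega(L)$ lower bounds of the second paragraph together with the robust part of the third: in each case one must upgrade ``the relevant set is somewhat uniform/structured'' to a quantitatively honest statement — ``$\Omega(\log p)$-universal with implied constant depending only on the density'' for the former, ``contains a positive-density progression'' for the latter — and this forces the Bogolyubov Bohr-set extraction, the multidimensional Dirichlet step, the Kneser/sum-product structural input, and the exponential-sum estimates to be assembled so that ``positive density'' is the only hypothesis actually used. A secondary difficulty is the bookkeeping of which cells are genuinely ``$\forall$'' rather than forced, and of staying careful with the two ``?'' gaps.
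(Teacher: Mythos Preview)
Your overall architecture is right, and several cells are fine: the $O(1)$ entries via Theorems~\ref{t:Ruzsa_intr} and~\ref{t:cov_AB_intr}, and your separation argument for $\cov^{\times}(A-A)$ is essentially Lemma~\ref{l:Fish_covering}. But the $\Omega(L)$ lower bounds have genuine gaps.

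For $\cov^{\times}((A-A)^c)$: Bogolyubov's lemma places a Bohr set inside $2A-2A$, not inside $A-A$; there is no general result giving a bounded-dimension Bohr set in $A-A$ from density alone, so your Dirichlet step has no input. The paper instead cites a direct averaging argument (Example~\ref{exm:A-A}, from \cite{SS_higher}): writing $C_i=x_i^{-1}A$, one has $\sum_{t_1,\dots,t_k}\big|\bigcap_i(C_i+t_i)\big|=p|A|^k$, so once $\a^k p>1$ some translated intersection has at least two elements, and any nonzero difference of two such elements lies in every $C_i-C_i=x_i^{-1}(A-A)$. This gives multiplicative $k$-universality of $A-A$ for $k\le\log p/\log(1/\a)$ with no structural hypothesis on $A-A$.

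For $\cov^{+}((AB)^c)$ and $\cov^{+}(((A+B)^{-1})^c)$: Kneser's theorem requires $|AB|<|A|+|B|-1$, which is far stronger than $|(AB)^c|=\Omega(p)$, and the conclusion that $AB$ lies in a bounded union of multiplicative cosets is false in general. The paper's argument (Proposition~\ref{p:R_gen}) is a direct character-sum computation: with $f$ the representation function of $A/B$, one expands $\sum_z f(z+x_1)\cdots f(z+x_k)$ by multiplicative characters and controls the off-diagonal by the Weil bound for $\sum_z\chi_1(z+x_1)\cdots\chi_k(z+x_k)$; the main term dominates for $k=(1-o(1))\log p/\log(1/\a\beta)$. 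The reciprocal column uses additive characters and the Weil bound for $\sum_z e\big(r_1(z+x_1)^{-1}+\dots+r_k(z+x_k)^{-1}\big)$ instead.

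Two smaller issues. For the $\forall_{O(1)}$ cells your claim that ``non-emptiness of $(A+_\eps B)^c$ forces one of $A,B$ to be concentrated'' is false (take $A,B$ random subsets of an interval of length $p/3$); what the paper uses is Theorem~\ref{t:croot2012some}, which places a shift of a positive-density Bohr set inside $(A+_\eps B)^c$, after which Theorem~\ref{t:cov_AB_intr} gives $O(1)$. And your reading of the ``?'' symbol is off: the hypothesis already forces $AB\neq\F_p$, so the question is not whether $AB$ is everything but whether $\cov^{+}(AB)=\Omega(L)$ always holds --- this is genuinely open (Question~\ref{q:cov+(A/B)}), with only the popular version settled via Proposition~\ref{p:R_gen_opp}.
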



It follows from Theorem \ref{t:table_intr} that there are sets $S$, $|S|=o(p)$ such that 
\[\max\{ \cov^{+} (S), \cov^{\times} (S) \} = O(1) \,.\] 
Indeed, take $S=A-A$, $|S|=o(p)$,  where $A\subseteq \F_p$ is a set of large density. Thus, an analogue of the sum--product phenomenon (see, e.g., \cite{TV}) {\it has no place} for the covering numbers.






\section{Definitions and notation}
\label{sec:def}

Let $\Gr$ be a finite abelian group and we denote the cardinality of $\Gr$ by $N$. 
Given two sets $A,B\subseteq \Gr$, define  
the {\it sumset} 
of $A$ and $B$ as 
$$A+B:=\{a+b ~:~ a\in{A},\,b\in{B}\}\,.$$
Similarly, if $A,B \subseteq \F_p$ we can define the {\it product set} and the {\it ratio set} 
$$AB:=\{ab ~:~ a\in{A},\,b\in{B}\}\,, 
    \quad \quad 
    A/B:=\{a/b ~:~ a\in{A},\,b\in{B}\setminus \{0\} \}\,, 
$$
Given sets $A,B \subseteq \Gr$  and $\eps \in (0,1)$ we write $A+_\eps B$ for the set of $x\in \Gr$, having at least $\eps |\Gr|$ representations as sum of $a+b$, $a\in A$, $b\in B$.
Sometimes such subsets of $A+B$ are called {\it popular} sumsets. 
Also, denote by $A^c$ the complement of $A\subseteq \Gr$, that is $A^c = \Gr \setminus A$.  
If $A\subseteq \F_p$ and $\lambda \in \F_p$, $\la \neq 0$, then we write $\lambda \cdot A := \{\lambda a ~:~ a\in A\}$.
Given a set $A\subseteq \Gr$ and a positive integer $k$, we put 
$$
    \Delta_k (A) := \{ (a,a, \dots, a) ~:~ a\in A \} \subseteq \Gr^k \,.
$$
Also, let 
$\Delta_k (x) := \Delta_k (\{ x \})$, $x\in \Gr$.
Now we have 
\begin{equation}\label{def:A-A_intersection}
    A-A := \{ a-b ~:~ a,b\in A\} = \{ s\in \Gr ~:~ A\cap (A+s) \neq \emptyset \} \,.
\end{equation}
A natural generalization of the last formula in \eqref{def:A-A_intersection} 
is the set 
\begin{equation}\label{def:A-A_m_intersection}
    \{ (x_1, \dots, x_k) \in \Gr^k ~:~ A\cap (A+x_1) \cap \dots \cap (A+x_k) \neq \emptyset \} 
    =
    A^k - \Delta_k (A) \,,
\end{equation}
which is called the {\it higher difference set} (see \cite{SS_higher}). 
A simple characterization of the higher difference sets is given in \cite[Proposition 10]{SS_dim}. 

\begin{lemma}
    Let $k \ge 2$, $m \in [k]$ be positive integers, and
    let $A_1,\dots,A_k,B \subseteq \Gr$ be finite subsets of an abelian group $\Gr$.
    Then
    \begin{equation}\label{f:characteristic2}
        A_1 \m \dots \m A_k - \Delta_k (B)
            =
    \end{equation}
    $$
                \bigsqcup_{(x_1,\dots,x_m) \in A_1 \m \dots \m A_m - \Delta_m (B)}
                    \{ (x_1,\dots,x_m) \} \m (A_{m+1} \m \dots \m A_k - \Delta_{k-m}(B \cap (A_1-x_1) \cap \dots \cap (A_m-x_m)) \,.
    $$
\label{l:characteristic}
\end{lemma}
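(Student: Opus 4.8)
The plan is to prove the identity by a direct set-theoretic computation, reading off membership in a higher difference set through the intersection description recorded in \eqref{def:A-A_m_intersection}. The starting point is the elementary observation that for any finite sets $C_1,\dots,C_k,B \sbeq \Gr$ one has
$$
(y_1,\dots,y_k) \in C_1 \m \dots \m C_k - \Delta_k(B)
\quad\Longleftrightarrow\quad
B \cap (C_1 - y_1) \cap \dots \cap (C_k - y_k) \neq \emptyset \,,
$$
which follows at once from $\Delta_k(B) = \{(b,\dots,b) : b\in B\}$ and the definition of the difference set: a witness $b\in B$ with $y_i + b \in C_i$ for every $i$ is precisely a point of the displayed intersection. (This is exactly the last identity in \eqref{def:A-A_m_intersection}, stated for possibly distinct coordinate sets.)

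Next I would split the coordinates. Fix $m\in[k]$ and write a tuple of $\Gr^k$ as $(x_1,\dots,x_m,z_1,\dots,z_{k-m})$, so that in the criterion above $y_i = x_i$ for $i\le m$ and $y_{m+j} = z_j$. Then
$$
B \cap \bigcap_{i=1}^{k}(A_i - y_i)
=
\Bigl(B \cap \bigcap_{i=1}^{m}(A_i - x_i)\Bigr) \cap \bigcap_{j=1}^{k-m}\bigl(A_{m+j} - z_j\bigr) \,.
$$
Put $B_x := B \cap (A_1 - x_1) \cap \dots \cap (A_m - x_m)$. The full intersection is contained in $B_x$, hence it can be nonempty only if $B_x \neq \emptyset$, which by the observation is exactly the condition $(x_1,\dots,x_m) \in A_1 \m \dots \m A_m - \Delta_m(B)$; and once that holds, applying the observation a second time to the sets $A_{m+1},\dots,A_k$ together with the set $B_x$ shows that the intersection is nonempty iff $(z_1,\dots,z_{k-m}) \in A_{m+1} \m \dots \m A_k - \Delta_{k-m}(B_x)$. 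Combining the two equivalences yields that $(x_1,\dots,x_m,z_1,\dots,z_{k-m})$ lies in $A_1 \m \dots \m A_k - \Delta_k(B)$ if and only if $(x_1,\dots,x_m)$ ranges over $A_1 \m \dots \m A_m - \Delta_m(B)$ and, for that fixed $(x_1,\dots,x_m)$, the tuple $(z_1,\dots,z_{k-m})$ ranges over $A_{m+1} \m \dots \m A_k - \Delta_{k-m}(B_x)$ — which is exactly the right-hand side of \eqref{f:characteristic2}.

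Finally, the union on the right is disjoint for the trivial reason that the fibres indexed by distinct $(x_1,\dots,x_m)$ sit over distinct values of the first $m$ coordinates. I expect no genuine obstacle here; the only points requiring care are (i) verifying that restricting the index set to $A_1 \m \dots \m A_m - \Delta_m(B)$ discards nothing, i.e. that the ``superset'' step in the nonemptiness argument really becomes an equivalence after this restriction, and (ii) keeping the bookkeeping between the $k$-tuple and its split into an $m$-tuple and a $(k-m)$-tuple consistent. Both are settled by the two applications of the basic intersection criterion above.
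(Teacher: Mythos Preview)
Your argument is correct: the intersection criterion $(y_1,\dots,y_k)\in C_1\m\dots\m C_k-\Delta_k(B)\iff B\cap\bigcap_i(C_i-y_i)\neq\emptyset$ is exactly right, and splitting the intersection into the first $m$ and last $k-m$ factors yields the claimed fibrewise description, with disjointness immediate from the first $m$ coordinates. The paper does not actually prove this lemma --- it is quoted from \cite[Proposition 10]{SS_dim} --- so there is nothing to compare against; your direct verification is the natural one.
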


In particular, having a set  $A\subseteq \Gr$ and another  set $X=\{x_1,\dots, x_k\}$ and  writing  
$$A_X = (A+x_1) \cap \dots \cap (A+x_k) \,,$$ 
we see that 
\[
    A^k - \Delta_k (A) = \{ X \subseteq \Gr^k ~:~ A\cap A_X \neq \emptyset \} 
    \,.
\]
Observe 
the basic 
inclusion  
\begin{equation}\label{f:KK_inclusion}
    A_X -X \subseteq A \,.
\end{equation}

We need 
the generalized triangle inequality \cite[Theorem 7]{SS_higher}.

\begin{lemma}
    Let $k_1,k_2$ be positive integers, $W\subseteq \Gr^{k_1}$, $Y\subseteq \Gr^{k_2}$ and $X,Z \subseteq \Gr$. 
    Then
\begin{equation}\label{f:gen_triangle_S}
    |W\times X| |Y-\Delta_{k_2} (Z)| \le |W\times Y \times Z - \Delta_{k_1+k_2+1} (X)| \,, 
\end{equation}
    and 
\begin{equation}\label{f:gen_triangle_S'}
    |W\times Z - \Delta_{k_1+1} (X)| = |W \times X - \Delta_{k_1+1} (Z) | \,. 
\end{equation}
    In particular, for any $k\ge 2$ and sets $A_1,\dots, A_k \subseteq \Gr$ one has 
\begin{equation}\label{f:gen_triangle_G}
    |A_1 \times \dots \times A_k - \D_k (\Gr)| = |\Gr| |A_1 \times \dots \times A_{k-1} - \D_{k-1} (A_k)| \,. 
\end{equation}
\label{l:gen_triangle_S}
\end{lemma}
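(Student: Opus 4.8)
The plan is to establish the three assertions by explicit Ruzsa--type injections and bijections, lifted coordinatewise to the product groups $\Gr^{k_i}$; no Fourier or energy input is needed, only the ``fix a representation'' trick behind the classical triangle inequality.

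For \eqref{f:gen_triangle_S} I first fix, for every $v=(v_1,\dots,v_{k_2}) \in Y - \Delta_{k_2}(Z)$, one representation $v_i = y_i(v) - z(v)$ $(i\in[k_2])$ with $y(v)=(y_1(v),\dots,y_{k_2}(v)) \in Y$ and $z(v)\in Z$. Then I define
$$
\Phi\colon W \times X \times \big(Y - \Delta_{k_2}(Z)\big) \longrightarrow W \times Y \times Z - \Delta_{k_1+k_2+1}(X)
$$
by letting $\Phi(w,x,v)$ be the $(k_1+k_2+1)$--tuple obtained from $(w,y(v),z(v)) \in W\times Y\times Z$ by subtracting $x$ from every coordinate; by construction the image lies in $W\times Y\times Z - \Delta_{k_1+k_2+1}(X)$. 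The map is injective: from an image tuple, subtracting its last coordinate from each of the coordinates in positions $k_1+1,\dots,k_1+k_2$ returns $v$, hence through the fixed representation both $y(v)$ and $z(v)$; comparing $z(v)$ with the last coordinate recovers $x$; and the first $k_1$ coordinates then recover $w$. Since the source has cardinality $|W\times X|\cdot|Y-\Delta_{k_2}(Z)|$, this yields \eqref{f:gen_triangle_S}.

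For \eqref{f:gen_triangle_S'} I use the map $\psi$ on $\Gr^{k_1+1}$ defined by $\psi(a_1,\dots,a_{k_1},b)=(a_1-b,\dots,a_{k_1}-b,-b)$, which is readily checked to be an involution; it sends $(w_1-x,\dots,w_{k_1}-x,\,z-x)$ to $(w_1-z,\dots,w_{k_1}-z,\,x-z)$ for all $w\in W$, $x\in X$, $z\in Z$, so it restricts to a bijection between $W\times Z - \Delta_{k_1+1}(X)$ and $W\times X - \Delta_{k_1+1}(Z)$. For \eqref{f:gen_triangle_G} I apply \eqref{f:gen_triangle_S'} with $W=A_1\times\dots\times A_{k-1}$, $X=\Gr$, $Z=A_k$, getting $|A_1\times\dots\times A_k - \Delta_k(\Gr)| = |W\times\Gr - \Delta_k(A_k)|$, and then I observe that $W\times\Gr - \Delta_k(A_k) = (W - \Delta_{k-1}(A_k))\times\Gr$: a pair $(u,t)$ lies in the left-hand set iff $u\in W-\Delta_{k-1}(A_k)$, because for any fixed $a\in A_k$ witnessing $u$ the last coordinate $g-a$ runs over all of $\Gr$ as $g$ does, with the other coordinates untouched. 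Hence $|A_1\times\dots\times A_k-\Delta_k(\Gr)| = |\Gr|\cdot|A_1\times\dots\times A_{k-1}-\Delta_{k-1}(A_k)|$.

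I do not expect a genuine obstacle: the content is just the Ruzsa triangle injection together with careful bookkeeping of the diagonal coordinates $\Delta_k(\cdot)$. The one place that needs real care is the first step --- verifying simultaneously that $\Phi$ takes values in the target set and is injective, i.e. that the ``free'' parameters $w$ and $x$ together with the chosen representation of $v$ can all be read off from the image; it is also worth disposing at the outset of the degenerate cases (some set empty), where the inequalities are trivial.
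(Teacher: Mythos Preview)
Your proof is correct and follows the standard Ruzsa--triangle injection approach: fix a representation of each element of the difference set, then build an explicit injection (or bijection) and verify by hand that all inputs can be recovered from the image. The paper does not prove this lemma directly but cites \cite[Theorem 7]{SS_higher}; however, in the proof of the more general Lemma~\ref{l:gen_triangle_new} the paper uses exactly the same device (``as always we assume that there is a unique pair $(a,b)$ representing a fixed element $x$\dots''), so your argument is in line with the paper's methods.
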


Let $\FF{\Gr}$ be its dual group in representation theory.
For any function $f:\Gr \to \mathbb{C}$ and $\rho \in \FF{\Gr}$ define 
the Fourier transform of $f$ at $\rho$ by the formula 
\begin{equation}\label{f:Fourier_representations}
\FF{f} (\rho) = \sum_{g\in \F_p} f(g) \ov{\rho (g)} \,.
\end{equation}
The Parseval identity is 
\begin{equation}\label{F_Par}
    N\sum_{g\in \Gr} |f(g)|^2
        =
            \sum_{\rho \in \FF{\Gr}} \big|\widehat{f} (\rho)\big|^2 \,.
\end{equation}
The Wiener norm of a function $f:\Gr \to \C$ is 
\begin{equation}\label{def:Wiener}
    \| f\|_W := N^{-1} \sum_{\rho \in \FF{\Gr}} |\FF{f} (\rho)| \,.
\end{equation}
We use the same capital letter to denote a set $A\subseteq \Gr$ and   its characteristic function $A: \Gr \to \{0,1 \}$. 
Given a set $A\subseteq \Gr$ and $\eps \in (0,1]$ define the {\it spectrum} of $A$ as 
\begin{equation}\label{def:spectrum}
    \Spec_\eps (A) = \{ \chi \in \Gr ~:~ |\FF{A} (\chi)|\ge \eps |A| \} \,.
\end{equation}
We write $\Spec'_\eps (A)$ for $\Spec_\eps (A) \setminus \{ 1\}$.
If $f,g : \Gr \to \C$ are some functions, then 
$$
    (f*g) (x) := \sum_{y\in \Gr} f(y) g(x-y) \quad \mbox{ and } \quad (f\circ g) (x) := \sum_{y\in \Gr} f(y) g(y+x) \,.
$$
For a real function $f$ put 
\[
    \| f\|^{2k}_{\E_k} = \sum_{x} (f\circ f)^k (x) \ge 0 \,.
\]
In particular, we obtain the {\it higher energy} (see \cite{SS_higher})  
\[
      \E_{k}(A) =  \sum_{x} (A\circ A)^k (x) \,.
\]

Consider a refinement of definition \eqref{def:covering} which allows to say something about the set of shifts $X$. Namely, let $f:\Gr \to \C$ be a function and $\eps \in (0,1]$ be a parameter. Define 
\begin{equation}\label{def:covering_f_eps}
    \cov_{f,\eps} (A) = \min_{X\subseteq \Gr} \{ |X| ~:~ A+X = \Gr\,,~ |\FF{X*f} (\chi)| \le \eps |X| \| f\|_1, \forall \chi\neq 1 \} \,.
\end{equation}
Clearly, $\cov_{} (A) \le \cov_{f,\eps} (A)$.
Further, assuming that $\eps \ge N^{-1/3} \log N$ and taking a random set $X$, $|X| \gg \eps^{-2} \log^2 N$ one can show that 
\begin{equation}\label{f:covering_f_eps_trivial}
    \cov_{f,\eps} (A) \ll \max\left\{ \frac{N}{|A|} \cdot  \log N, \eps^{-2} \log^2 N \right\} \,,
\end{equation}
see details in the  proof of Theorem \ref{t:cov_ABC} below.

\bp

We need a special case of \cite[Proposition 15]{SSS} (the best results are contained in \cite{Semchankau_wrappers}, and the first  result of this type was obtained in \cite{croot2012some}), see also  \cite[Section 6]{SSS} for general $\Gr$. 
Recall that for a given subset $\G \subseteq \FF{\Gr}$ and a number  $\eps \in (0,1)$ the set $\mathcal{B}(\Gamma, \eps)$ is called 
{\it Bohr set} (see, e.g., 
\cite[Section 4.4]{TV}), if
\[
	\mathcal{B}(\Gamma, \eps) = \{ x\in \Gr  ~:~ | \chi(x) -1| \le \eps \,, \forall \chi \in \G \} \,.
\]
The size of $\G$ is called the {\it dimension} of $\mathcal{B}(\Gamma, \eps)$, $\eps$ is the {\it radius} of $\mathcal{B}(\Gamma, \eps)$. 
The connection between the size of the Bohr set, its dimension and radius is well--known, see, for example, \cite[Lemma 4.20]{TV}
\begin{equation}\label{f:Bohr_size}
	|\mathcal{B}(\Gamma, \eps)| \ge (\eps/2\pi)^{|\G|} |\Gr|  \,.
\end{equation}

\begin{theorem}
    Let $\Gr$ be an abelian group,
    $\d \in (0,1)$ be a parameter, $A,B\subseteq \Gr$ be sets, $|A|, |B| \gg |\Gr|$ and $|(A+B)^c| \gg |\Gr|$.
    Then the complement to $A+_\delta B$ contains a shift of a  Bohr set $\mathcal{B}$ of dimension $O_\d (1)$ and the radius $\Omega_\d (1)$. 
\label{t:croot2012some}
\end{theorem}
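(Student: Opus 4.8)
Since the result is quoted as a special case of \cite[Proposition~15]{SSS} (sharper forms are in \cite{Semchankau_wrappers}, and the first result of this kind is \cite{croot2012some}), the cleanest option is simply to invoke that proposition; what follows is the mechanism I would reproduce for a self--contained proof. Write $N=|\Gr|$. Since $(A+_\d B)^c\supseteq (A+B)^c\neq\emptyset$, I would fix $x_0\in(A+B)^c$, so that $A\cap(x_0-B)=\emptyset$, and aim for a Bohr set $\mathcal B(\Lambda,\rho)$ with $|\Lambda|=O_\d(1)$ and $\rho=\Omega_\d(1)$ such that $(A*B)(x_0+y)<\d N$ for all $y\in\mathcal B(\Lambda,\rho)$; then $x_0+\mathcal B(\Lambda,\rho)$ is the required translate, and by \eqref{f:Bohr_size} it has density $\gg_\d 1$, which is the ``randomness'' conclusion being sought.

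The main technical device is Croot's pointwise inequality: since $B\cap(x_0-A)=\emptyset$,
\[
    (A*B)(x_0+y) \;=\; |B\cap(x_0+y-A)| \;\le\; |(x_0+y-A)\setminus(x_0-A)| \;=\; |A|-(A\circ A)(y) \;=\; \tfrac12|A\,\triangle\,(A+y)|
\]
for every $y\in\Gr$, with the analogous bound when $A,B$ are interchanged. So it is enough to put a Bohr set of the required shape inside the set of ``almost periods'' $\{y:|A\triangle(A+y)|<2\d N\}$ (equivalently $\{y:(A\circ A)(y)>|A|-\d N\}$), or its $B$--analogue. Expanding by Parseval \eqref{F_Par}, $|A\triangle(A+y)|=\tfrac2N\sum_\chi|\widehat A(\chi)|^2(1-\mathrm{Re}\,\chi(y))$; taking $\Lambda=\Spec_\eta(A)$ for a suitable $\eta=\eta(\d)$ gives $|\Lambda|\le\eta^{-2}N/|A|=O_\d(1)$, and for $y\in\mathcal B(\Lambda,\rho)$ the part of this sum over $\chi\in\Lambda$ is at most $\rho^2|A|<\tfrac\d2 N$ provided $\rho=\Omega_\d(1)$.

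The hard part — and the reason one imports this rather than reproving it — is the tail over $\chi\notin\Lambda$: one needs $|A\triangle(A+y)|$ small for \emph{all} $y\in\mathcal B(\Lambda,\rho)$ simultaneously, and a bare $L^2$ estimate only bounds it on average over $y$, which is too weak when the Fourier $\ell^2$--mass of $A$ is spread thinly over many characters. This is precisely what the almost--periodicity machinery of \cite{croot2012some}, \cite{SSS}, \cite{Semchankau_wrappers} handles: passing to a mollified convolution and applying an $L^p$ (Croot--Sisask--type) almost--periodicity bound, or iterating a density increment, while exploiting that the hypothesis $|(A+B)^c|\gg N$ — equivalently, that $(A+B)^c=\bigcap_{a\in A}(B^c+a)$ is large, so the translates $B^c+a$ ($a\in A$) overlap heavily — constrains the spectra of $A$ and $B$. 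That input is what produces the bounded dimension $O_\d(1)$ and positive radius $\Omega_\d(1)$; once it is granted, the Bohr set is assembled around $x_0$ and the proof is complete.
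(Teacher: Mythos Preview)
The paper does not prove this theorem; it is stated as a black box, imported as a special case of \cite[Proposition~15]{SSS} (with \cite{croot2012some} and \cite{Semchankau_wrappers} cited for the original and sharper forms). Your proposal does exactly the same --- invoke the cited result --- while correctly sketching the Croot mechanism (the pointwise inequality $(A*B)(x_0+y)\le |A|-(A\circ A)(y)$ reducing the problem to almost--periods, followed by the spectral/almost--periodicity input); this matches the paper's treatment and your sketch is accurate.
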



Finally, we need \cite[Theorem 5.4]{s_Fish} to fill in some 
gaps 
in the proof of Theorem \ref{t:table_intr}. 

\begin{lemma}
    Let $q$ be a positive integer, $A\subseteq \Z/q\Z$ be a set, $|A|=\a q$. Suppose that the least prime factor of $q$ greater than $2\a^{-1} +3$. Then 
    $\cov^\times (A-A) \le \a^{-1}+1$. 
\label{l:Fish_covering}
\end{lemma}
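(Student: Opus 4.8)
The plan is to reduce the assertion about the difference set to a multiplicative covering estimate for a single translate of $A$, and then to prove that estimate by treating the units of $\Z/q\Z$ separately from the zero divisors, using the hypothesis on the least prime factor $p$ of $q$ to control both. First I would fix $a_0\in A$ and set $A':=A-a_0$, so that $0\in A'$, $|A'|=\a q$ and $A'-A'=A-A$. If $X\cdot A'=\Z/q\Z$, then for every $y=xa'$ with $x\in X$, $a'\in A'$ we also have $y=xa'-x\cdot 0\in x(A'-A')=x(A-A)$, because $0\in A'$; hence $\cov^\times(A-A)\le\cov^\times(A')$, and it suffices to show $\cov^\times(A')\le\a^{-1}+1$. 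Note also the trivial lower bound $\cov^\times(A-A)\ge q/|A-A|\ge\a^{-1}$, so the claimed inequality is essentially sharp and cannot afford a logarithmic loss.

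Next I would handle the units $U=(\Z/q\Z)^\times$. Since $p>2\a^{-1}+3$, every prime $p'\mid q$ satisfies $1/p'<\a/2$, so no coset of any $p'\Z/q\Z$ has size as large as $\a q=|A'|$; in particular $A'$ is not contained in any proper subgroup of $\Z/q\Z$, and a pigeonhole count over the cosets of the $p'\Z/q\Z$ shows $A'$ meets $U$ in a set of density $\gg\a$. Working inside the finite abelian group $U$, I would run the mechanism behind the Ruzsa covering lemma (Theorem~\ref{t:Ruzsa_intr}): choose $X_0\subseteq U$ maximal subject to the dilates $\{x(A'\cap U):x\in X_0\}$ being pairwise disjoint, so that $|X_0|\le|U|/|A'\cap U|\le\a^{-1}$, and maximality gives $U\subseteq X_0\cdot(A'\cap U)(A'\cap U)^{-1}$. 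The task is then to upgrade this to a cover of $U$ by dilates of $A'$ itself. Here I would split via the Chinese Remainder Theorem, $\Z/q\Z\cong\prod_j\Z/p_j^{e_j}\Z$ with all $p_j$ odd (so each $(\Z/p_j^{e_j}\Z)^\times$ is cyclic), and use a dichotomy: either $A'-A'$ is sufficiently pseudorandom — controlled through a Fourier/Wiener estimate, in the spirit of the input used for Theorem~\ref{t:croot2012some} — in which case a direct analytic covering argument produces $X\subseteq U$ with $|X|=\lceil|U|/|A'\cap U|\rceil+1$ and $X(A'\cap U)=U$ directly; or $A'-A'$ is structured, in which case the prime hypothesis forces it to contain a dense Bohr/interval-type set on some CRT factor, which one covers multiplicatively with $O(\a^{-1})$ dilates by a continued-fraction / equidistribution argument.

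For the zero divisors, a proper divisor $d\mid q$ with $d>1$ gives the class $\{y:\gcd(y,q)=d\}$, which I would cover by dilates of $A'$ of the form $xA'$ with $\gcd(x,q)=d$: on the CRT factors not dividing $d$, $x$ acts invertibly and one needs the projections of $A'$ there to be large, which again follows because $\a q$ exceeds the size of any coset of a $p'\Z/q\Z$, and the number of dilates needed across all factors is bounded using $p>2\a^{-1}+3$ once more. Combining, these extra dilates can be absorbed into at most one element beyond the $\a^{-1}$ already spent on $U$, yielding $\cov^\times(A')\le\a^{-1}+1$.

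The main obstacle I expect is precisely the upgrade in the second step: converting a Ruzsa-type cover by the ratio set $(A'\cap U)(A'\cap U)^{-1}$ (or by dilates of a Bohr set) into an honest cover by dilates of $A'$ while keeping the total size at $\a^{-1}+1$, i.e.\ matching the trivial lower bound up to an additive constant rather than an additive constant times the number of divisor classes or a logarithmic factor. This is the point where the difference-set structure of $A-A$ and the large-least-prime-factor hypothesis must be used together, and cleanly organising the pseudorandom-versus-structured case split — and the interaction of both cases with the CRT decomposition — is the technically delicate part of the argument.
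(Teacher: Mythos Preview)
The paper does not prove this lemma; it is quoted verbatim from \cite[Theorem 5.4]{s_Fish}. Nevertheless, your proposal has a genuine gap that makes the strategy unworkable.

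Your reduction $\cov^\times(A-A)\le\cov^\times(A')$ is correct (indeed trivial, since $A'=A-a_0\subseteq A-A$), but the target $\cov^\times(A')\le\a^{-1}+1$ that you then try to establish is \emph{false} in general. Take $q=p$ prime and $A$ a uniformly random subset of density $\a$; then $A'\setminus\{0\}$ is a random subset of the cyclic group $\F_p^\times$ of density $\approx\a$, and by the standard random--set estimate (Example~\ref{exm:random} in the paper) its multiplicative covering number is $\sim\a^{-1}\log p$, not $O(\a^{-1})$. Thus the quantity you reduced to is generically too large by a factor $\log p$, and no Fourier/Bohr dichotomy applied to $A'$ alone can repair this: by passing to a single translate of $A$ you have discarded precisely the difference--set structure of $A-A$ that makes the lemma true. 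The subsequent ``upgrade'' step you flag as delicate is in fact impossible.

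The argument behind the cited result uses that structure directly, via a one--line pigeonhole. Let $k$ be the least integer with $k>\a^{-1}-1$ (so $k\le\a^{-1}$); since the least prime factor $p$ of $q$ exceeds $2\a^{-1}+3>k$, the integers $1,\dots,k$ are all units in $\Z/q\Z$. Fix any $x\in\Z/q\Z$. The $k+1$ translates $A,A+x,\dots,A+kx$ have total size $(k+1)\a q>q$, so two of them meet: $(A+ix)\cap(A+jx)\neq\emptyset$ for some $0\le j<i\le k$, giving $(i-j)x\in A-A$ and hence $x\in(i-j)^{-1}(A-A)$ with $i-j\in\{1,\dots,k\}$. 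Therefore $X=\{1^{-1},\dots,k^{-1}\}$ satisfies $X\cdot(A-A)=\Z/q\Z$ and $|X|\le\a^{-1}+1$.
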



The signs $\ll$ and $\gg$ are the usual Vinogradov symbols. 
When the constants in the signs  depend on a parameter $M$, we write $\ll_M$ and $\gg_M$.
Let us denote by $[n]$ the set $\{1,2,\dots, n\}$.
All logarithms are to base $e$.
For a prime number $p$ we write $\F_p = \Z/p\Z$ and $\F^*_p = \F_p \setminus \{0\}$.

\section{On universal sets}
\label{sec:universal}

In this section we discuss 
properties of universal sets and obtain a generalization of Ruzsa's triangle inequality. 
Having a set $A\subseteq \Gr$ we put 
\begin{equation}\label{def:un}
    \un(A) = \un (A,\Gr) = \max\{ k \ge 1 ~:~ A^{k} - \D_{k} (\Gr) = \Gr^{k} \} \,.
\end{equation}
Alternatively, since $A-\D_1 (\Gr) = \Gr$, we have in view of formula \eqref{f:gen_triangle_G} of Lemma \ref{l:gen_triangle_S} that 
\begin{equation}\label{def:un'}
    \un(A) = \max \{1, \max\{ k \ge 2 ~:~ A^{k-1} - \D_{k-1} (A) = \Gr^{k-1} \} \} \,.
\end{equation}
For example, $\un(A)=2$ iff $A-A = \Gr$. 
If there is no such $k$, then we define $\un (A) = 1$ appealing to the first definition \eqref{def:un}. 
The statement $A^{k-1} - \D_{k-1} (A) = \Gr^{k-1}$  is equivalent to say that $A$ is a $k$--universal set, 
that is for any $z_1,\dots,z_k \in \Gr$ there is $w\in \Gr$ such that $z_1+w,\dots,z_k+w \in A$.
In particular, any set is $1$--universal and that is why we always have $\un (A) \ge 1$. 
It is easy to see, that for any $A\neq \Gr$ one has $\un (A) <\infty$ and, by convention, we put 
$\un (\Gr) = \infty$.
Also, if $B\subseteq A$, then $\un (B) \le \un (A)$. 
Finally, we have 
\[
    \un(A) = \un (A+x) = \un (\la \cdot A)
\]
for any $x\in \Gr$ and $\la\in \Z$ such that $(\la, |\Gr|) =1$. 
It is easy to observe (take a random intersection $A_X$) that a trivial upper bound for $\un (A)$, where $|A|=\d N$ is 
\begin{equation}\label{f:un_upper}
    \un (A) \le \frac{\log N}{\log (1/\d)} 
\end{equation}
and this bound is tight in general, see Section \ref{sec:covering}. 

\bp 

In paper \cite[inequality (29)]{SS_higher} it was  proved, in particular, that
for any $k$--universal set $U$ and an arbitrary set $S$ one has 
\begin{equation}\label{f:universal_addition}
    |U+S| \ge N \cdot \left( \frac{|S|}{N} \right)^{1/k} \,.
\end{equation}
The last bound is a consequence of the formula $U^{k-1} - \Delta_{k-1} (U) = \Gr^{k-1}$ and inequality \eqref{f:gen_triangle_S} of Lemma \ref{l:gen_triangle_S}, namely, 
\begin{equation}\label{f:gen_triangle}
    |S| N^{k-1} = |S| |U^{k-1} - \Delta_{k-1} (U)| \le |U^k - \Delta_k (S)| \le |U-S|^{k} \,.
\end{equation}
In particular, taking any $S$ with $|S|=1$, we obtain  
\begin{equation}\label{f:universal_size}
    |U| \ge N^{1-1/k} \,.
\end{equation}

\bp 

We want to study the connection between sumsets and universal sets. 
Probably, the first result in this direction 
was obtained by 
N.G. Moshchevitin (see paper \cite[Proposition 15]{SS_higher}, where the reader can find some additional results on this topic).

\begin{lemma}
    Let $k_1,k_2$ be positive integers, and $X_1,\dots,X_{k_1},Y$, $Z_1,\dots,Z_{k_2},W$ be finite subsets of an abelian group.
    Then
    $$
        |X_1 \m \dots \m X_{k_1} - \Delta_{k_1} (Y)| |Z_1 \m \dots \m Z_{k_2} - \Delta_{k_2} (W)|
            \le
    $$
    $$
            \le
                | (X_1 - W) \m \dots \m (X_{k_1}- W) \m (Y-Z_1) \m \dots \m (Y-Z_{k_2}) - \Delta_{k_1+k_2} (Y-W)| \,.
    $$
    In particular, for any non--empty sets $A,B \subseteq \Gr$ one has 
\begin{equation}\label{f:Moshchevitin}
    \un (A+B) \ge \un(A) + \un (B) - 1 \,.
\end{equation}
\label{l:Moshchevitin}
\end{lemma}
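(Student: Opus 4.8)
The plan is to establish the displayed tensor–product inequality first and then extract \eqref{f:Moshchevitin} as the special case where all factors collapse appropriately. For the main inequality I would mimic the Ruzsa-type embedding used to prove Lemma \ref{l:gen_triangle_S}: construct an injection from the set on the left-hand side into the set on the right-hand side. Concretely, fix $y\in Y$ and $w\in W$ and think of a tuple $(x_1-y,\dots,x_{k_1}-y)\in X_1\m\dots\m X_{k_1}-\Delta_{k_1}(Y)$ together with a tuple $(z_1-w,\dots,z_{k_2}-w)\in Z_1\m\dots\m Z_{k_2}-\Delta_{k_2}(W)$; the goal is to recover from the \emph{sum-shifted} coordinates $x_i - w$ and $y - z_j$, all shifted by the common element $y-w$, enough information to reconstruct both original tuples. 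The map I would write down sends the pair of tuples to
$$
\big((x_1-y)+(y-w),\dots,(x_{k_1}-y)+(y-w),\ (y-w)-(z_1-w)\cdot(-1)?,\dots\big),
$$
i.e. one should send it to $(x_1-w,\dots,x_{k_1}-w,\,y-z_1,\dots,y-z_{k_2})$ with the base point $y-w$ subtracted in each coordinate; since $(x_i-w)-(y-w)=x_i-y$ and $(y-z_j)-(y-w)=w-z_j$, the image tuple indeed lies in $(X_1-W)\m\dots\m(X_{k_1}-W)\m(Y-Z_1)\m\dots\m(Y-Z_{k_2})-\Delta_{k_1+k_2}(Y-W)$. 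Injectivity is the point to check carefully: from the image one recovers $x_i-y$ and $w-z_j$ for all $i,j$, but not $y$ or $w$ individually, so a naive parametrization is \emph{not} injective on the nose. The standard fix is Ruzsa's: count with multiplicity, or rather show that each element of the right-hand set has at least as many preimages (under the natural ``forgetful'' surjection from pairs of realizations) as there are on the left; equivalently, fix a canonical realization of each left-hand tuple (choose, for each $(x_i-y)_i$, one admissible $y$, and similarly one admissible $w$) and then the map becomes injective because the image, together with the knowledge of $Y$ and $W$ and the canonical choices, pins everything down. This is exactly the mechanism behind \eqref{f:gen_triangle_S}–\eqref{f:gen_triangle_S'} in Lemma \ref{l:gen_triangle_S}, so I would either cite that lemma after an appropriate substitution of the $W$-blocks, or redo the three-line injection.

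Once the tensor inequality is in hand, \eqref{f:Moshchevitin} follows by a direct specialization. Set $k_1=\un(A)-1$ and $k_2=\un(B)-1$, take all $X_i=A$, $Y=A$, all $Z_j=B$, $W=B$, and use the second characterization \eqref{def:un'} of $\un(\cdot)$: universality of $A$ gives $A^{k_1}-\Delta_{k_1}(A)=\Gr^{k_1}$, hence the first factor on the left equals $|\Gr|^{k_1}=N^{k_1}$, and similarly the second factor equals $N^{k_2}$. The right-hand side becomes $|(A-B)^{k_1}\m(A-B)^{k_2}-\Delta_{k_1+k_2}(A-B)|$, i.e. $|(A-B)^{k_1+k_2}-\Delta_{k_1+k_2}(A-B)|$, which is at most $N^{k_1+k_2}$. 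Combining, $N^{k_1}\cdot N^{k_2}\le |(A-B)^{k_1+k_2}-\Delta_{k_1+k_2}(A-B)|\le N^{k_1+k_2}$, forcing equality, so $(A-B)^{m}-\Delta_m(A-B)=\Gr^m$ with $m=k_1+k_2=\un(A)+\un(B)-2$. Since $\un(A+B)=\un(A-B)$ (translation-invariance together with $-B$ being a translate-free reflection — more precisely $\un(A+B)=\un(-(A+B))=\un((-A)+(-B))$, and $\un$ is unchanged under $x\mapsto -x$ because $-1$ is coprime to $|\Gr|$), definition \eqref{def:un'} then yields $\un(A+B)\ge m+1=\un(A)+\un(B)-1$, as claimed. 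If one prefers to avoid the reflection bookkeeping, apply the tensor inequality instead with $W=-B$ and $Z_j$ chosen so that $Y-Z_j$ ranges over $A+B$, which lands directly on $\un(A+B)$.

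The main obstacle is the injectivity/multiplicity argument in the first paragraph: the forgetful map on realizations is genuinely many-to-one, and one has to organize the counting so that the fibers on the left are no larger than the fibers on the right. I expect that the cleanest route is not to reprove this but to recognize that the asserted inequality is literally an iterated application of the generalized triangle inequality \eqref{f:gen_triangle_S} of Lemma \ref{l:gen_triangle_S} — peel off one $Z_j$ (or one $X_i$) at a time, each peeling step being an instance of \eqref{f:gen_triangle_S} with suitably reindexed $W$, $Y$, $X$ — together with the symmetry \eqref{f:gen_triangle_S'} to swap the roles of $X$ and $Z$ blocks when needed. The second, mild obstacle is purely cosmetic: making sure the $\Delta_{k_1+k_2}(Y-W)$ base point in the tensor inequality is consistent under all these substitutions, and that when $\un(A)=1$ or $\un(B)=1$ the bound \eqref{f:Moshchevitin} degenerates correctly (it becomes $\un(A+B)\ge\un(A)$ or $\ge\un(B)$, which is immediate since $A+B\supseteq A+b$ for any $b\in B$ and $\un$ is translation-invariant and monotone under inclusion). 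I would dispatch those edge cases in a sentence at the end.
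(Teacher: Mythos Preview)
The paper does not actually prove this lemma; it is quoted with a citation to \cite[Proposition 15]{SS_higher}, so there is no ``paper's own proof'' to compare against. Your sketch is essentially the standard argument for such Ruzsa--type generalized triangle inequalities and is correct: fixing canonical representatives $(x_1,\dots,x_{k_1},y)$ and $(z_1,\dots,z_{k_2},w)$ for each left-hand tuple and mapping the pair to $(x_1-w,\dots,x_{k_1}-w,\,y-z_1,\dots,y-z_{k_2})-\Delta_{k_1+k_2}(y-w)$ does land in the right-hand set, and since $(x_i-w)-(y-w)=x_i-y$ and $(y-z_j)-(y-w)=-(z_j-w)$, the image determines both left tuples, hence (via the canonical choices) the full preimage. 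Your alternative route, peeling off one factor at a time via \eqref{f:gen_triangle_S}--\eqref{f:gen_triangle_S'}, also works.

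One small point: your asserted identity $\un(A+B)=\un(A-B)$ is not justified by the chain $\un(A+B)=\un(-(A+B))=\un((-A)+(-B))$, since $(-A)+(-B)\neq A-B$ in general. The clean fix is exactly the one you state a line later: apply the tensor inequality with $-B$ in place of $B$ (so $Z_j=W=-B$), using $\un(-B)=\un(B)$; then the right-hand side becomes $|(A+B)^{k_1+k_2}-\Delta_{k_1+k_2}(A+B)|$ directly and no reflection bookkeeping is needed. With that adjustment, and your handling of the $\un(A)=1$ edge case, the derivation of \eqref{f:Moshchevitin} is complete.
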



We improve estimate \eqref{f:Moshchevitin} significantly, see Corollary \ref{c:un_prod} below. First of all, let us obtain a simple lemma.

\begin{lemma}
    Let $A,B,X \subseteq \Gr$ be sets, and $X=\{x_1,\dots,x_t\}$.
    Put $D=A-B$ and suppose that $B_i$ are subsets of $B$, $i\in [t]$. 
    Then 
\begin{equation}\label{f:new_inclusion}
    A_X \subseteq D_{\bigcup_{i=1}^t (B_i+x_i)} \,.
\end{equation}
    In particular,
\begin{equation}\label{f:new_inclusion2}
    A_X \subseteq D_{B+X} \,.
\end{equation}
\label{l:new_inclusion}
\end{lemma}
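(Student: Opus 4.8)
The plan is to unwind the definitions of the operations $A_X$ and $D_S$ and then check the claimed inclusion pointwise. Recall that for a set $X = \{x_1,\dots,x_t\}$ we have $A_X = \bigcap_{i=1}^t (A+x_i)$, and for a set $S = \{s_1,\dots,s_r\}$ we have $D_S = \bigcap_{j=1}^r (D + s_j)$. So take an arbitrary element $a \in A_X$; I must show that $a \in D + (b + x_i)$ for every $b \in B_i$ and every $i \in [t]$, i.e. that $a - b - x_i \in D = A - B$ for all such $b, i$.

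The key step is the following observation: since $a \in A_X$, in particular $a \in A + x_i$, so $a - x_i \in A$. Now for any $b \in B_i \subseteq B$ we have $b \in B$, hence $(a - x_i) - b \in A - B = D$. Rearranging, $a - (b + x_i) \in D$, which is exactly $a \in D + (b + x_i)$. Since this holds for every $i$ and every $b \in B_i$, we conclude $a \in \bigcap_{i=1}^t \bigcap_{b \in B_i} (D + b + x_i) = D_{\bigcup_{i=1}^t (B_i + x_i)}$, which proves \eqref{f:new_inclusion}. For \eqref{f:new_inclusion2}, simply take $B_i = B$ for every $i$; then $\bigcup_{i=1}^t (B_i + x_i) = \bigcup_{i=1}^t (B + x_i) = B + X$, and \eqref{f:new_inclusion} specializes to $A_X \subseteq D_{B+X}$.

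There is essentially no obstacle here — the statement is a bookkeeping identity about intersections of translates, and the only thing to be careful about is the direction of the shifts (whether one works with $A + x_i$ or $A - x_i$ in the definition of $A_X$) so that the final index set reads $\bigcup_i (B_i + x_i)$ rather than, say, $\bigcup_i (B_i - x_i)$. One should also note the degenerate cases: if some $B_i = \emptyset$ it contributes nothing to the union and the inclusion is only easier, and if $A_X = \emptyset$ the inclusion is trivial. It is worth remarking that this lemma is the natural "intersection-form" companion of the elementary inclusion \eqref{f:KK_inclusion}, $A_X - X \subseteq A$: indeed, applying \eqref{f:KK_inclusion} with $A$ replaced by $D$ and $X$ replaced by $B + X$ recovers $D_{B+X} - (B+X) \subseteq D = A - B$, consistent with \eqref{f:new_inclusion2}.
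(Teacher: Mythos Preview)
Your proof is correct and follows essentially the same approach as the paper: take $z \in A_X$, use $z - x_i \in A$, and observe that for any $b \in B_i \subseteq B$ one has $z - x_i - b \in A - B = D$, hence $z \in D + (b + x_i)$. The paper likewise derives \eqref{f:new_inclusion2} by setting $B_i = B$, and it also notes (as you do) the alternative route via the inclusion \eqref{f:KK_inclusion} in the form $A \subseteq D_B$.
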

\begin{proof}
    Let $z\in A_X \neq \emptyset$. 
    By definition $z-x_i \in A$ for all $i\in [t]$. 
    Also, let $b_i\in B_i$.
    Then $z\in D + b_i + x_i$ iff 
    $z-x_i-b_i \in A-b_i \subseteq D$, $i\in [t]$. 
    Thus, we have obtained \eqref{f:new_inclusion} and putting $B_i=B$, we get \eqref{f:new_inclusion2}. Another way to prove \eqref{f:new_inclusion2} is to use a particular case of the main inclusion \eqref{f:KK_inclusion}, namely, $A\subseteq D_B$. 
This completes the proof.
$\hfill\Box$
\end{proof}

\bp 

\begin{corollary}
    Let $A,B \subseteq \Gr$. 
    Then 
\begin{equation}\label{f:un_prod}
    \un (A+B) \ge \un (A) \un (B) \,.
\end{equation}
\label{c:un_prod}
\end{corollary}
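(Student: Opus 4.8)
The plan is to prove the multiplicative bound $\un(A+B) \ge \un(A)\un(B)$ by iterating the single-variable inclusion of Lemma \ref{l:new_inclusion} together with the characterization of universality in terms of higher difference sets. Write $a = \un(A)$ and $b = \un(B)$; we may assume both are finite and at least $2$ (if either equals $1$ the bound is trivial since $\un(A+B)\ge 1$, and if one is infinite then that set equals $\Gr$ and $A+B=\Gr$ too). Set $D = A+B$. By \eqref{def:un} we must show $D^{k} - \Delta_k(\Gr) = \Gr^k$ for $k = ab$, equivalently that for every $X = \{x_1,\dots,x_k\}$ (a $k$-tuple in $\Gr^k$) the intersection $D_X = \bigcap_{i=1}^k (D+x_i)$ is non-empty.

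The key step is to exploit Lemma \ref{l:new_inclusion} in the form $A_X \subseteq D_{B+X}$, but applied in the opposite direction: since $A$ is $a$-universal, for \emph{any} $X$ with $|X| \le a$ we have $A_X \ne \emptyset$; I would like to conclude something about $D_X$ when $|X| = ab$. The idea is to split the $k = ab$ shifts $x_1,\dots,x_{ab}$ into $a$ blocks of size $b$ each. For a single block $Y$ of size $b$, because $B$ is $b$-universal we have $B_{-Y} \ne \emptyset$ — more precisely I want to use that $B - \Delta_b(\Gr)$ restricted appropriately is full, i.e. there exists $w$ with $w + y \in B$ for all $y \in Y$. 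Reversing the roles in Lemma \ref{l:new_inclusion} (writing $D = B+A = A+B$ and using the inclusion $B_Y \subseteq D_{A+Y}$), one sees that the shifts of $D$ by a block $Y$ contain a full translate coming from the structure $A$, and then assembling $a$ such blocks and using the $a$-universality of $A$ gives a common point. Concretely: for each block $Y_j$ ($j \in [a]$) pick, using $b$-universality of $B$, an element $s_j \in \Gr$ with $s_j + Y_j \subseteq B$; then $D + s_j + y = (A + B) + s_j + y \supseteq A + (s_j + y) \ni$ well — rather, $A + Y_j \cdot$-style reasoning shows $\bigcap_{y \in Y_j}(D + x_{j,y})$ contains a translate $A + s_j$ of $A$ (after relabeling, using that $B \supseteq s_j + Y_j$ forces $D + x \supseteq A + x - s_j$ won't quite work, so the honest route is via $D_X \supseteq A_{X'}$ for a suitable $X'$). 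The cleanest formulation: Lemma \ref{l:new_inclusion} with $B_i$ chosen as singletons gives $A_X \subseteq D_{\{b_i + x_i\}}$, and I will invert this to get, for each of the $a$ blocks, an inclusion $D_{Y_j} \supseteq A_{Z_j}$ with $|Z_j| = 1$ (a single shift), using $b$-universality of $B$; then $D_X = \bigcap_j D_{Y_j} \supseteq \bigcap_j A_{Z_j} = A_Z$ with $|Z| = a$, which is non-empty by $a$-universality of $A$.

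The main obstacle is getting the direction of the inclusion in Lemma \ref{l:new_inclusion} to cooperate: the lemma as stated bounds $A_X$ from above by $D_{B+X}$, whereas I need a lower bound on $D_X$ by something of the form $A_{(\text{small set})}$. I expect the resolution is to observe that $A_X \subseteq D_{\bigcup_i(B_i + x_i)}$ with each $B_i = \{b_i\}$ a singleton means $D_{\{b_1 + x_1, \dots, b_t + x_t\}} \supseteq A_X$; so given a target set $W = \{w_1, \dots, w_k\}$ for which I want $D_W \ne \emptyset$, I should \emph{produce} it as $\{b_i + x_i\}$. Group $W$ into blocks $W_j$, $|W_j| = b$; for each block, $b$-universality of $B$ gives $s_j$ with $s_j + (\text{shifted copy}) \subseteq B$, i.e. I can write each $w \in W_j$ as $w = \beta_w + \xi_j$ with $\beta_w \in B$ and $\xi_j$ a single value depending only on $j$. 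Then $D_{W_j} = D_{\{\beta_w + \xi_j : w \in W_j\}} \supseteq A_{\{\xi_j\}}$ by the lemma (with $X = \{\xi_j\}$, a singleton, and $B_i = \{\beta_w\}$), wait — the lemma needs $t$ matching the index count, so apply it with $t = 1$: $A_{\{\xi_j\}} \subseteq D_{\beta_w + \xi_j}$ for a single $w$, which is too weak. Instead apply it with $A$ and $B$ swapped: $B_{\{\xi_j\}} \subseteq D_{\{a_w + \xi_j : w\}}$ where now I range $a_w$ over a set $A$-side — this gives $D_{W_j} \supseteq B_{\{\xi_j\}}$ once I've arranged $W_j = \{a_w + \xi_j\}$ with $a_w \in A$, which I can do using $b$-universality... no, using the structure of $A$. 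I'll sort out which set plays which role so that the block size $b$ matches $\un(B)$ and the number of blocks $a$ matches $\un(A)$; the combinatorics forces a unique consistent assignment, and the final bound $D_X = \bigcap_{j=1}^a D_{W_j} \supseteq \bigcap_{j=1}^a (\text{translate of the other set}) = A_Z$ with $|Z| = a \le \un(A)$ is then non-empty. Optimality of the bound $ab$ is presumably witnessed by a product-type or Bohr-type construction and I would defer that to the remark following the corollary.
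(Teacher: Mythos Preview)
Your strategy is exactly the paper's: partition the $ab$ target shifts $W$ into $a$ blocks $W_1,\dots,W_a$ of size $b$; use $b$-universality of $B$ to write each block as $W_j = B_j + \xi_j$ with $B_j \subseteq B$ and a single $\xi_j \in \Gr$; then invoke Lemma~\ref{l:new_inclusion} to obtain $A_{\{\xi_1,\dots,\xi_a\}} \subseteq D_W$, which is non-empty by $a$-universality of $A$.

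The only real gap is your reading of Lemma~\ref{l:new_inclusion}. You write ``apply it with $t = 1$: $A_{\{\xi_j\}} \subseteq D_{\beta_w + \xi_j}$ for a single $w$, which is too weak.'' But in that lemma $t = |X|$, while each $B_i$ is an \emph{arbitrary subset} of $B$, not a singleton. Taking $t=1$, $X = \{\xi_j\}$, and $B_1 = \{\beta_w : w \in W_j\}$ (all $b$ elements at once), the lemma already gives $A_{\{\xi_j\}} \subseteq D_{B_1 + \xi_j} = D_{W_j}$; intersecting over $j\in[a]$ finishes. Even more directly---and this is literally what the paper does---apply the lemma once with $t=a$, $X = \{\xi_1,\dots,\xi_a\}$, and $B_j \subseteq B$ the $j$-th block's $B$-component: then $\bigcup_{j=1}^a (B_j + \xi_j) = W$ and the conclusion reads $A_X \subseteq D_W$ with $A_X \ne \emptyset$. (Strictly speaking the lemma is stated for $D = A - B$; apply it with $-B$ in place of $B$ and use $\un(-B) = \un(B)$.) Once you allow the $B_i$ to be full blocks rather than singletons, there is no need to swap the roles of $A$ and $B$ or to ``sort out which set plays which role''---the argument goes through immediately as you first outlined it.
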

\begin{proof}
    Let $D=A-B$, $m= \un (A)$, $n=\un (B)$.
    Take any $s_1,\dots, s_m \in \Gr^n$.
    By assumption the set $B$ is  $n$--universal set and hence any vector $s_i \in \Gr^n$, $i\in [m]$ can be written as 
    $s_i = b_i + \Delta_n (x_i)$, where $x_i \in \Gr$ and 
    $b_i = (b^{(1)}_i, \dots, b^{(n)}_i)$.
    Let $B_i = \{ b^{(1)}_i, \dots, b^{(n)}_i \}$, $i\in [m]$ and $X=\{x_1,\dots,x_m\}$.
    Using inclusion \eqref{f:new_inclusion} of Lemma \ref{l:new_inclusion}, we obtain 
\begin{equation}\label{tmp:19.03_1}
    A_X \subseteq D_{\bigcup_{i=1}^m (B_i+x_i)} = D_{s_1,\dots,s_m} \,.
\end{equation}
    Notice that the vector $(s_1,\dots,s_m)$ belongs to $\Gr^{mn}$.
    By assumption $A$ is $m$--universal set and hence $A_X \neq \emptyset$. 
    It means that $D$ is $mn$--universal set. 
This completes the proof.
$\hfill\Box$
\end{proof}

\bp 

Let us show that bound \eqref{f:un_prod} is tight. 

\begin{example}
\label{exm:universal_H}
    Let $\Gr = \F_2^n$ and $\bigsqcup_{i=1}^k S_i = [n]$, $|S_i| \sim n/k$ and let $$
    H_i = H_i (S_i) = \{ x=(x_1,\dots,x_n) \in \Gr ~:~ x_j = 0\,, \forall j\in S_i \} \,.
    $$
    Then each $H_i$ is a subspace of $\Gr$, $|H_i| \sim N^{1-1/k}$ and it is easy to check  (or see \cite[Theorem 2.1]{alon2009discrete}) that $U=\bigcup_{i=1}^k H_i$ is $k$--universal set, $|U| \sim k N^{1-1/k}$. 
    Let $U' = \bigcup_{i=1}^{k'} H'_i$, where subspaces $H'_i = H_i (S'_i)$ and the sets $S'_i$, $|S'_i| \sim n/k'$ form a random splitting of $[n]$. 
    Then it is easy to check that the sumset $U+U' = \bigcup_{i=1,j=1}^{k,k'} (H_i+H'_j)$ has size $O(kk'N^{1-1/kk'})$ and hence in view of \eqref{f:universal_size} one has $\un(U+U') \le \un(U) \un (U') = kk'$ for $(kk')^2 \log (kk') \ll \log N$, say. 
    A similar construction can be found in \cite{KLSS}.
\end{example}



\bp

Now we want to obtain a quantitative (or in other words, statistical) version of universality, which allows us to generalize   
Corollary \ref{c:un_prod}.
Given a set $A\subseteq \Gr$ and a positive integer $n$ define 
\[
    \U_n (A) = \U_n (A, \Gr) := \frac{|A^n-\D_n (\Gr)|}{N^n} \le 1 \,.
\]
In other words, $\U_n (A)$ is the proportion of $n$--tuples from $\Gr$ whose shifts belong to $A$.
In particular, 
$\U_n (A) = 1$ iff $A$ is $n$--universal. 
Sometimes we write $\ov{\U}_n (A) = \U^{1/n}_n (A)$.
In this terms, the inequality \eqref{f:gen_triangle} above can be rewritten as 
\begin{equation}\label{f:gen_triangle'}
    |A+S| \ge \left( \frac{|S|}{N} \cdot \U_n (A) \right)^{1/n} N = \left( \frac{|S|}{N} \right)^{1/n} \ov{\U}_n (A) N \,,
\end{equation}
    where $n$ be a positive integer and $A$ is an arbitrary  set. 
Further we know that (see \cite{SS_higher})
\begin{equation}\label{f:AAk_mult}
    |A^{nm} - \D_{nm} (A)| \le |A^n - \D_n (A)|^m \,.
\end{equation}
In terms of the quantity 
$\U_n$ 
the last estimate can be rewritten as 
\begin{equation}\label{f:AAk_mult_U}
    \U_{nm+1} (A) \le  \U^m_{n+1} (A) \,,
\end{equation}
and therefore 
$\{ \ov{\U}_n (A) \}_{n\ge 1}$ is decreasing by a subsequence. 
%
%
%
In this paper we are interested in {\it lower} bounds for the quantity 
$\U_n$.
To generalize Corollary \ref{c:un_prod} we need several new concepts. Given a vector $\vec{v} = (v_1,\dots, v_l)$, we write $|\vec{v}|=l$ for the number of components of $\vec{v}$. 

\begin{definition}
    Let $m,n$ be positive integers, $A \subseteq \Gr^{m}$, $B \subseteq \Gr^n$, and $m_1+\dots+m_n = m$,  where $0<m_j \in \Z$, $j\in [n]$.
    Define the following subset of $\Gr^{m+n}$
\[
    A \times_{m_1,\dots,m_n} B = \{ (a_1,b_1,\dots,a_n,b_n) ~:~ (a_1,\dots,a_n)\in A\,, (b_1,\dots,b_n) \in B,\, |a_i| = m_i,\, \forall i\in [n] \}\,.
\]
    Also, put
\[
    \Delta_{m_1,\dots,m_n;n} (B) = \{ (\D_{m_1} (b_1), \dots, \D_{m_n} (b_n)) ~:~ (b_1,\dots,b_n) \in B \} \subseteq \Gr^{m} \,.
\]
\end{definition}

\bp

    Let us make some useful remarks.
    First of all, we have $|\Delta_{m_1,\dots,m_n;n} (B)| = |B|$ and for any $x\in \Gr^n$ one has 
\begin{equation}\label{f:gen_Delta_shift}
    \Delta_{m_1,\dots,m_n;n} (B+x) = \Delta_{m_1,\dots,m_n;n} (B) + \Delta_{m_1,\dots,m_n;n} (x) \,.
\end{equation}
    Also, notice that  formula \eqref{f:gen_triangle_G} gives us for any $A\subseteq \Gr$
\begin{equation}\label{f:G_reduced_gen} 
    |A^m - \Delta_{m_1,\dots,m_n;n} (\Gr^n)| 
    = N^n \prod_{i=1}^n |A^{m_i-1} - \D_{m_i-1} (A)| \,.
\end{equation}
    Finally, the corresponding Cauchy--Schwarz inequality that connects our new difference set and a kind of energy is the following 
\[
    |Q|^{2m} |B|^2 \le |Q^m-\Delta_{m_1,\dots,m_n;n} (B)| 
\]
\begin{equation}\label{f:new_diff_energy} 
    \times 
    \sum_{b=(b_1,\dots,b_n),\, b'= (b'_1,\dots,b'_n) \in B}  (Q\circ Q)^{m_1} (b_1-b'_1) \dots (Q\circ Q)^{m_n} (b_n-b'_n) 
    \,,
\end{equation}
    where $Q\subseteq \Gr$, $B\subseteq \Gr^n$ are arbitrary sets.

    \bigskip

    Now we are ready to obtain the general triangle inequality which contains Lemma \ref{l:gen_triangle_S} for $n=1$. 

\begin{lemma}
    Let $m,n$ be positive integers, $A \subseteq \Gr^m$, $B,C \subseteq \Gr^n$,
    and $m_1+\dots+m_n = m$, where $0<m_j\in \Z$, $j\in [n]$.
    Then 
\begin{equation}\label{f:gen_triangle_new1}
    |C| |A-\Delta_{m_1,\dots,m_n;n} (B)| \le |A \times_{m_1,\dots,m_n} B - \Delta_{m_1+1,\dots,m_n+1;n} (C)| \,,
\end{equation}
    and 
\begin{equation}\label{f:gen_triangle_new1.5}
    |A \times_{m_1,\dots,m_n} B - \Delta_{m_1+1,\dots,m_n+1;n} (C)| 
    =
    |A \times_{m_1,\dots,m_n} C - \Delta_{m_1+1,\dots,m_n+1;n} (B)| 
    \,. 
\end{equation}
    Besides 
 \begin{equation}\label{f:gen_triangle_new2}
   |C| |A \pm \Delta_{m_1,\dots,m_n;n} (B)|
   \le 
   |A \pm \Delta_{m_1,\dots,m_n;n} (C)| |B\pm C| \,.
\end{equation}
\label{l:gen_triangle_new}
\end{lemma}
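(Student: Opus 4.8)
The plan is to prove the three displays in the order they are stated: the first, \eqref{f:gen_triangle_new1}, carries all the content, while \eqref{f:gen_triangle_new1.5} and \eqref{f:gen_triangle_new2} follow from it by formal manipulations. Throughout I abbreviate $\Delta:=\Delta_{m_1,\dots,m_n;n}$ and $\Delta':=\Delta_{m_1+1,\dots,m_n+1;n}$, and I use only that each $\Delta_{m_i}$ is additive, $\Delta_{m_i}(x\pm y)=\Delta_{m_i}(x)\pm\Delta_{m_i}(y)$ (so $\Delta$ is an injective homomorphism $\Gr^n\to\Gr^m$, as in \eqref{f:gen_Delta_shift}). I also use the block decompositions $\Gr^m=\Gr^{m_1}\times\dots\times\Gr^{m_n}$ and $\Gr^{m+n}=\Gr^{m_1+1}\times\dots\times\Gr^{m_n+1}$: membership $d\in A-\Delta(B)$ means there are $a=(a_1,\dots,a_n)\in A$ and $b=(b_1,\dots,b_n)\in B$ with $d_i=a_i-\Delta_{m_i}(b_i)$ for every $i\in[n]$, and for each such $d$ I fix one such representation and write $a(d),b(d)$ for the chosen $a,b$.

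To prove \eqref{f:gen_triangle_new1}, define $\phi\colon C\times(A-\Delta(B))\to A\times_{m_1,\dots,m_n}B-\Delta'(C)$ by declaring the $i$-th block (of length $m_i+1$) of $\phi(c,d)$, for $c=(c_1,\dots,c_n)\in C$, to be $(a_i(d)-\Delta_{m_i}(c_i),\,b_i(d)-c_i)$; equivalently $\phi(c,d)=(a(d)\times_{m_1,\dots,m_n}b(d))-\Delta'(c)$, which plainly lies in the target set. The crucial claim is that $\phi$ is injective. Suppose $\phi(c,d)=\phi(c',d')$. Comparing the last coordinate of each block gives $b_i(d)-c_i=b_i(d')-c_i'$, and comparing the first $m_i$ coordinates gives $a_i(d)-\Delta_{m_i}(c_i)=a_i(d')-\Delta_{m_i}(c_i')$; applying $\Delta_{m_i}$ to the first identity and subtracting it from the second eliminates $c_i,c_i'$ and leaves $a_i(d)-\Delta_{m_i}(b_i(d))=a_i(d')-\Delta_{m_i}(b_i(d'))$, i.e. $d_i=d_i'$ for all $i$, hence $d=d'$. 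By the fixed choice of representatives $a(d)=a(d')$ and $b(d)=b(d')$, and feeding this back into $b_i(d)-c_i=b_i(d')-c_i'$ yields $c=c'$. Injectivity of $\phi$ is exactly \eqref{f:gen_triangle_new1}.

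For \eqref{f:gen_triangle_new1.5}, let $\tau\colon\Gr^{m+n}\to\Gr^{m+n}$ act block by block, sending $(u,v)\in\Gr^{m_i}\times\Gr$ to $(u-\Delta_{m_i}(v),\,-v)$ in the $i$-th block; by additivity of $\Delta_{m_i}$ this $\tau$ is a bijection whose inverse has the same shape. Applied to an element of $A\times_{m_1,\dots,m_n}B-\Delta'(C)$ with $i$-th block $(a_i-\Delta_{m_i}(c_i),\,b_i-c_i)$, the map $\tau$ produces the element with $i$-th block $(a_i-\Delta_{m_i}(b_i),\,c_i-b_i)$, i.e. the corresponding element of $A\times_{m_1,\dots,m_n}C-\Delta'(B)$; since $\tau$ is a bijection of the ambient group carrying one difference set onto the other, their cardinalities agree. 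For \eqref{f:gen_triangle_new2} with the lower sign, project $\Gr^{m+n}$ onto its ``$A$-coordinates'' $\Gr^m$ (the first $m_i$ entries of each block) and its ``$B$-coordinates'' $\Gr^n$ (the last entry of each block): these projections carry $A\times_{m_1,\dots,m_n}B-\Delta'(C)$ into $A-\Delta(C)$ and into $B-C$ respectively, and an element of the difference set is determined by the pair of its images, whence $|A\times_{m_1,\dots,m_n}B-\Delta'(C)|\le|A-\Delta(C)|\,|B-C|$; combining with \eqref{f:gen_triangle_new1} gives \eqref{f:gen_triangle_new2}. The upper-sign case is obtained by the same scheme — running the constructions of $\phi$, $\tau$ and the projection with the signs on the $\Delta$'s reversed, equivalently by applying the lower-sign case with $B,C$ replaced by $-B,-C$ and using $\Delta(-B)=-\Delta(B)$; here one should keep some care over the sign conventions.

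The main obstacle is the injectivity of $\phi$, which is the one place the interlaced structure is genuinely exploited. The mechanism is that the singleton $B$-blocks of $\phi(c,d)$ record exactly the relation between $c_i$ and $b_i(d)$, which is precisely what is needed to cancel the $C$-contribution from the length-$m_i$ $A$-blocks; what survives the cancellation, $a_i(d)-\Delta_{m_i}(b_i(d))$, is literally $d_i$, so $d$ is recovered — whereas without the matching $B$-blocks one would recover only $a_i(d)-\Delta_{m_i}(c_i)$ for an unknown $c_i$ and the map would fail to be injective. Once \eqref{f:gen_triangle_new1} is in place, the bijection $\tau$, the projection bound, and the sign bookkeeping for the ``$+$'' case are routine.
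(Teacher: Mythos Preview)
Your proofs of \eqref{f:gen_triangle_new1} and \eqref{f:gen_triangle_new1.5} are correct and coincide with the paper's: the same injection $\phi$ and the same block-wise bijection $\tau$. The minus case of \eqref{f:gen_triangle_new2} is also fine, and the paper derives it exactly as you do, by combining \eqref{f:gen_triangle_new1} with the projection bound $|A\times_{m_1,\dots,m_n}B-\Delta'(C)|\le |A-\Delta(C)|\,|B-C|$.

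The plus case of \eqref{f:gen_triangle_new2}, however, does \emph{not} reduce to ``sign bookkeeping'', and your reduction is wrong. Replacing $B,C$ by $-B,-C$ in the minus case yields
\[
|C|\,|A+\Delta(B)|\le |A+\Delta(C)|\,|(-B)-(-C)|=|A+\Delta(C)|\,|B-C|,
\]
not $|A+\Delta(C)|\,|B+C|$. No sign substitution turns $|B-C|$ into $|B+C|$: already in the scalar case $n=m=1$ the two inequalities are $|C||A-B|\le|A-C||B-C|$ (Ruzsa's triangle inequality, provable by your injection) and $|C||A+B|\le|A+C||B+C|$ (a Pl\"unnecke-type bound), and the second is genuinely not a consequence of the first. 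Likewise, your injection $\phi$ does not adapt: in the plus analogue the $i$-th block of the image is $(a_i+\Delta_{m_i}(c_i),\,b_i+c_i)$, and eliminating $c_i$ by subtracting $\Delta_{m_i}(b_i+c_i)$ recovers $a_i-\Delta_{m_i}(b_i)$, not $d_i=a_i+\Delta_{m_i}(b_i)$, so $d$ cannot be read off and injectivity fails.

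This is why the paper treats the plus case separately via the Petridis magnification-ratio argument: one sets $R_A[B]=\min_{\emptyset\neq X\subseteq B}|A+\Delta(X)|/|X|$, proves (following Petridis, using the additivity \eqref{f:gen_Delta_shift}) that $|A+\Delta(C+X)|\le R_A[B]\,|C+X|$ for every $C\subseteq\Gr^n$, and deduces $|C|\,|A+\Delta(B)|\le|A+\Delta(C)|\,|B+C|$ by the standard tensor-power/minimiser trick. Your proof needs this (or an equivalent Pl\"unnecke-type input) for the upper sign.
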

\begin{proof}
    Let us take any element 
    $x:=(a_1-\D_{m_1} (b_1), \dots, a_n - \D_{m_n} (b_n)) \in A-\Delta_{m_1,\dots,m_n;n} (B)$, where $a=(a_1,\dots,a_n) \in A$, $|a_i|=m_i$, $i\in [n]$, and $b=(b_1,\dots,b_n) \in B$. 
    As always we assume that there is a unique pair $(a,b)$ representing a fixed element $x$ of $A-\Delta_{m_1,\dots,m_n;n} (B)$. 
    Now consider the map
\[
    (x,c) \to 
    (a_1,b_1,\dots,a_n,b_n) - 
    (\D_{m_1+1} (c_1), \dots, \D_{m_n+1} (c_n))
    \in 
    A \times_{m_1,\dots,m_n} B - \Delta_{m_1+1,\dots,m_n+1;n} (C) \,,
\]
    where $c=(c_1,\dots,c_n) \in C$. 
    Subtracting $a_i - \D_{m_i} (c_i)$ and $b_i - c_i$ for any $i\in [n]$, we see that the defined map is an injection. In gives us \eqref{f:gen_triangle_new1} and 
    bound \eqref{f:gen_triangle_new2} with minuses follows from the obtained estimate \eqref{f:gen_triangle_new1}. 
    To 
    get 
    \eqref{f:gen_triangle_new2} with pluses consider the magnification ratio 
\[
    R_A[B] = R_A[B] (m_1,\dots,m_n) :=\min_{\emptyset \neq X \subseteq B} \frac{|A + \Delta_{m_1,\dots,m_n;n} (X)|}{|X|} \,.
\]
    Repeating the Petridis argument \cite{Petridis} (combining with  formula \eqref{f:gen_Delta_shift}, also see \cite[Section 8]{SS_higher}), we obtain for any $C\subseteq \Gr^m$ that 
\[
    |A+\Delta_{m_1,\dots,m_n;n} (C+X)| \le R_A[B] \cdot |C+X| \,.
\]
    Combining the last formula with 
    standard arguments (see, e.g.,  \cite[Corollary 37]{SS_higher}), 
    one obtains
\[
     |C| |A + \Delta_{m_1,\dots,m_n;n} (B)|
   \le 
   |A + \Delta_{m_1,\dots,m_n;n} (C)| |B + C| 
\]
    as required. 
%
%
    Finally, 
    the map 
\[
    (a_1,b_1,\dots,a_n,b_n) - 
    (\D_{m_1+1} (c_1), \dots, \D_{m_n+1} (c_n))
\]
\[
    \to
    (a_1,c_1,\dots,a_n,c_n) - 
    (\D_{m_1+1} (b_1), \dots, \D_{m_n+1} (b_n))
\]
    is injection and hence \eqref{f:gen_triangle_new1.5} follows. 
This completes the proof.
$\hfill\Box$
\end{proof}

\bp

Lemma \ref{l:gen_triangle_new} implies the following quantitative version of Corollary \ref{c:un_prod}.

\begin{corollary}
    Let $m,n$ be positive integers, and $A,B \subseteq \Gr$. 
    Then 
\begin{equation}\label{f:A-B_universal}
    \U_{nm} (A+B) \ge \U_m (A) \U^m_n (B) \,.
\end{equation}
    In particular, for any set $S\subseteq \Gr$ one has 
\begin{equation}\label{f:A-B_universal_expansion}
    |A+B+S| \ge  
    \left( \frac{|S|}{N} \right)^{1/mn} \ov{\U}^{1/n}_m (A) \ov{\U}_n (B) N 
    \,,
\end{equation}
    and for any positive integer $l$ the following holds 
\begin{equation}\label{f:A-B_universal_sumsets}
    \ov{\U}_{m^l} (lA) \ge \left(\ov{\U}_m (A) \right)^{\frac{m^l-1}{m^l-m^{l-1}}}  \,.
\end{equation}
\label{c:A-B_universal}
\end{corollary}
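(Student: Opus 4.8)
The plan is to prove \eqref{f:A-B_universal} first; the expansion bound \eqref{f:A-B_universal_expansion} and the iterated bound \eqref{f:A-B_universal_sumsets} then follow with little extra work. To bound $\U_{nm}(A+B) = |(A+B)^{nm} - \Delta_{nm}(\Gr)|/N^{nm}$ from below (write $N = |\Gr|$), I would first produce an explicit large subset of $(A+B)^{nm} - \Delta_{nm}(\Gr)$. As subsets of $\Gr^{nm}$ one has $(A+B)^{nm} = A^{nm} + B^{nm}$, and the block--diagonal copy $\Delta_{n,\dots,n;m}(A^m)$ (throughout, the symbol $\Delta_{n,\dots,n;m}$ carries $m$ copies of the index $n$) is contained in $A^{nm}$. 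Since $\Delta_{n,\dots,n;m}$ is additive (cf. \eqref{f:gen_Delta_shift}), so that $\Delta_{nm}(\Gr) = \Delta_{n,\dots,n;m}(\Delta_m(\Gr))$ and $\Delta_{n,\dots,n;m}(A^m) - \Delta_{nm}(\Gr) = \Delta_{n,\dots,n;m}(A^m - \Delta_m(\Gr))$, this gives the inclusion
\[
    (A+B)^{nm} - \Delta_{nm}(\Gr) \supseteq B^{nm} + \Delta_{n,\dots,n;m}(A^m - \Delta_m(\Gr)) \,.
\]

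Next I would estimate the cardinality of the right--hand side by Lemma \ref{l:gen_triangle_new}. Apply inequality \eqref{f:gen_triangle_new2} with the plus sign, taking in the notation of that lemma the ambient set in $\Gr^{nm}$ to be $B^{nm}$, all parameters $m_j$ equal to $n$ (so the lemma's ``$n$'' is our $m$), the set ``$B$'' to be $\Gr^m$, and the set ``$C$'' to be $A^m - \Delta_m(\Gr)$. Bounding $|\Gr^m + (A^m - \Delta_m(\Gr))| \le N^m$ and evaluating $B^{nm} + \Delta_{n,\dots,n;m}(\Gr^m) = (B^n - \Delta_n(\Gr))^{\times m}$ (a product of unions over the $m$ independent blocks is the union of the products, using $B^n + \Delta_n(\Gr) = B^n - \Delta_n(\Gr)$), this yields
\[
    |B^{nm} + \Delta_{n,\dots,n;m}(A^m - \Delta_m(\Gr))| \ge \frac{|A^m - \Delta_m(\Gr)| \cdot |B^n - \Delta_n(\Gr)|^m}{N^m} \,.
\]
Since $|A^m - \Delta_m(\Gr)| = \U_m(A) N^m$ and $|B^n - \Delta_n(\Gr)|^m = \U_n^m(B) N^{nm}$, combining with the inclusion and dividing by $N^{nm}$ yields $\U_{nm}(A+B) \ge \U_m(A)\U_n^m(B)$, i.e. \eqref{f:A-B_universal}; in particular, taking $\U_m(A) = \U_n(B) = 1$ recovers Corollary \ref{c:un_prod}.

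For \eqref{f:A-B_universal_expansion} I would substitute $A \mapsto A+B$ and $n \mapsto nm$ into the expansion estimate \eqref{f:gen_triangle'} to get $|A+B+S| \ge (|S|/N)^{1/nm}\, \ov{\U}_{nm}(A+B)\, N$, and then insert \eqref{f:A-B_universal} through $\ov{\U}_{nm}(A+B) = \U_{nm}(A+B)^{1/nm} \ge (\U_m(A)\U_n^m(B))^{1/nm} = \ov{\U}_m^{1/n}(A)\, \ov{\U}_n(B)$. For \eqref{f:A-B_universal_sumsets} I would induct on $l$: the case $l=1$ is the equality $\ov{\U}_m(A) = \ov{\U}_m(A)^{(m-1)/(m-1)}$; for $l \ge 2$, write $lA = A + (l-1)A$ and apply \eqref{f:A-B_universal} with $B := (l-1)A$, $n := m^{l-1}$, getting $\U_{m^l}(lA) \ge \U_m(A)\, \U_{m^{l-1}}((l-1)A)^m$. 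Taking $m^l$--th roots turns the right side into $\ov{\U}_m(A)^{1/m^{l-1}}\, \ov{\U}_{m^{l-1}}((l-1)A)$; feeding in the inductive hypothesis and using the elementary identity $\tfrac{1}{m^{l-1}} + \tfrac{m^{l-1}-1}{m^{l-1}-m^{l-2}} = \tfrac{m^l-1}{m^l - m^{l-1}}$ closes the induction.

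The one step that genuinely requires care is the first: pinning down the inclusion into $(A+B)^{nm} - \Delta_{nm}(\Gr)$ and then correctly matching the multi--index notation of Lemma \ref{l:gen_triangle_new} so that \eqref{f:gen_triangle_new2} applies with all $m_j = n$. Once those identifications are in place, the rest --- the identity $B^{nm} + \Delta_{n,\dots,n;m}(\Gr^m) = (B^n - \Delta_n(\Gr))^{\times m}$, the additivity of $\Delta_{n,\dots,n;m}$, and the exponent arithmetic in the induction --- is routine.
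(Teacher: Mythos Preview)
Your proof is correct and follows essentially the same route as the paper: establish the inclusion $(A+B)^{nm}-\Delta_{nm}(\Gr)\supseteq B^{nm}\pm\Delta_{n,\dots,n;m}(A^m-\Delta_m(\Gr))$, then apply Lemma~\ref{l:gen_triangle_new} to bound the right side by $N^{-m}|A^m-\Delta_m(\Gr)|\,|B^n-\Delta_n(\Gr)|^m$, and finally deduce \eqref{f:A-B_universal_expansion} via \eqref{f:gen_triangle'} and \eqref{f:A-B_universal_sumsets} by induction. The only cosmetic differences are that the paper obtains the inclusion by invoking the proof of Corollary~\ref{c:un_prod} (through Lemma~\ref{l:new_inclusion}) rather than your direct observation $\Delta_{n,\dots,n;m}(A^m)\subseteq A^{nm}$, and it uses \eqref{f:gen_triangle_new1} together with a projection bound in place of your direct appeal to \eqref{f:gen_triangle_new2}.
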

\begin{proof}
    Let $D=A-B$, $\mathcal{A} = A^{m} - \D_m (\Gr)$, and $\mathcal{B} = B^{nm}$. 
    From the proof of Corollary \ref{c:un_prod} (see formula \eqref{tmp:19.03_1}) we know that 
\[
    |D^{nm} - \D_{nm} (\Gr)|
    \ge 
    \left| \bigcup_{(x_1,\dots,x_m) \in A^{m} - \D_m (\Gr)} \left( (B^n - \D_n (x_1)) \times \dots \times  (B^n - \D_n (x_m))\right) \right|
\]
\[
=
|\mathcal{B} - \D_{n,\dots,n;m} (\mathcal{A})| \,.
\]
    Using Lemma \ref{l:gen_triangle_new}, 
    we obtain 
\[
    |\mathcal{A}| |B^n - \D_n (\Gr)|^m 
    =
    |\mathcal{A}| |\mathcal{B} - \D_{n,\dots,n;m} (\Gr^m)|
    \le 
    |\mathcal{B} \times_{n,\dots,n} \Gr^m - 
    \D_{n+1,\dots,n+1;m} (\mathcal{A})|
\]
\[
    \le 
    |\mathcal{B} - \D_{n,\dots,n;m} (\mathcal{A})| N^m 
    \,.
\]
    Combining the last two bounds, we derive
\[
    |(A-B)^{nm} - \D_{nm} (\Gr)| \ge N^{-m} |A^{m} - \D_m (\Gr)|^{} |B^n - \D_n (\Gr)|^m 
    \,, 
\]
    and \eqref{f:A-B_universal} follows.

    To get \eqref{f:A-B_universal_expansion} we use the obtained bound and repeat the calculations from \eqref{f:gen_triangle}. It gives us 
\[
    N |A-B+S|^{nm} \ge |S| |D^{nm} - \D_{nm} (\Gr)| \ge N^{-m} |S| |\mathcal{A}| |B^n - \D_n (\Gr)|^m 
\]
    Finally, estimate \eqref{f:A-B_universal_sumsets} follows from \eqref{f:A-B_universal} by induction. 
This completes the proof.
$\hfill\Box$
\end{proof}

\bp 

Considering the simplest case  $m=n=2$ of \eqref{f:A-B_universal_expansion}, we derive
\[
    |A+B+S|^4 \ge |S||A-A||B-B|^2 \,.
\]
Of course, the real power of \eqref{f:A-B_universal}, \eqref{f:A-B_universal_expansion} comes when the parameters $m$ and $n$ are large.

\bp

The same argument gives us a new expanding property of universal sets.

\begin{corollary}
    Let $n,k$ be positive integers, $U\subseteq \Gr$ be a $k$--universal set, and $S\subseteq \Gr^n$ be an arbitrary set, $|S|=\sigma N^n$.
    Then 
\begin{equation}\label{f:universal_addition_new}
|U^{nk}-\Delta_{k,\dots,k;n} (S)| \ge \sigma N^{nk} 
    \,.
\end{equation}
    In particular, for any sets $A,S\subseteq \Gr$ and an arbitrary $m\ge 1$ the following holds
\begin{equation}\label{f:universal_addition_new2}
    |U+A+S| \ge N \left( \frac{|S|}{N} \right)^{1/km} \ov{\U}^{1/k}_m (A)
    \,.
\end{equation}    
\label{c:universal_addition_new}
\end{corollary}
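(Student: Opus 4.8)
The plan is to mirror the proof of Corollary \ref{c:A-B_universal}, but with $A$ replaced by a genuinely $k$--universal set $U$, so that the factor $\U_k(U)$ that appears there equals $1$ exactly. Concretely, for the first assertion \eqref{f:universal_addition_new}, set $D = U$ itself (rather than a difference $A-B$) and note that, since $U$ is $k$--universal, $U^k - \D_k(\Gr) = \Gr^k$. I would then run the same chain of inclusions as in \eqref{tmp:19.03_1}: writing each vector $s_i \in S \subseteq \Gr^n$ via universality of $U$ and collecting the shifts, one obtains that the set $U^{nk} - \D_{k,\dots,k;n}(S)$ contains a union, over $(x_1,\dots,x_n)$ ranging in $U^k - \D_k(\Gr) = \Gr^k$, of products of translates of $U^k - \D_k(\Gr)$; more cleanly, one applies inequality \eqref{f:gen_triangle_new1} of Lemma \ref{l:gen_triangle_new} with $A = U^{nk}$, $B = \Gr^n$, $C = S$ and all $m_i = k-1$ (so $m = n(k-1)$), using that $|U^{nk-?} - \dots|$ collapses appropriately. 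The key input is that $|U^{k}-\D_k(\Gr)| = N^k$, which forces the left-hand factor to be $N^{nk}$ times a density, yielding $|U^{nk}-\D_{k,\dots,k;n}(S)| \ge |S| \cdot N^{nk}/N^n = \s N^{nk}$ as claimed.

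For the second assertion \eqref{f:universal_addition_new2}, I would combine \eqref{f:universal_addition_new} with the Cauchy--Schwarz / triangle mechanism already used to pass from higher-difference-set size to sumset size — exactly the step \eqref{f:gen_triangle} and its refinement \eqref{f:gen_triangle'}. Take the set $S' \subseteq \Gr$ of the statement and form $A^m - \D_m(\Gr) \subseteq \Gr^m$, which has size $\ov{\U}_m(A)^m N^m = \U_m(A) N^m$ (up to the factor from \eqref{f:G_reduced_gen}); feed this, together with $U$ being $k$--universal, into \eqref{f:universal_addition_new} applied with $n$ replaced by the appropriate product of parameters and $S$ replaced by $(A^m-\D_m(\Gr)) \times S'$ (or rather an iterated product), then unwind via the inequality $|S|N^{t-1} = |S||U^{t-1}-\D_{t-1}(U)| \le |U^t - \D_t(S)| \le |U - S|^t$ from \eqref{f:gen_triangle}, applied with $t = km$ and the composite set in the role of $S$. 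This produces $|U+A+S'|^{km} \ge |S'| \cdot |A^m - \D_m(A)| \cdot N^{km-m-1} \cdot N$ after bookkeeping, i.e. $|U+A+S'| \ge N (|S'|/N)^{1/km} \ov{\U}_m(A)^{1/k}$, which is \eqref{f:universal_addition_new2}.

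The main obstacle I anticipate is purely bookkeeping: getting the exponents and the $N$-powers in the generalized $\Delta_{m_1,\dots,m_n;n}$ notation to line up, since here one is juggling three sets ($U$, $A$, $S$) with two independent multiplicities ($k$ coming from universality of $U$, and $m$ coming from the $\U_m(A)$ factor), and the product construction $\times_{m_1,\dots,m_n}$ has to be applied with a non-constant tuple of block sizes. In particular one must check carefully that the normalization $|\Delta_{m_1,\dots,m_n;n}(B)| = |B|$ and the shift-equivariance \eqref{f:gen_Delta_shift} are being used in the right places, and that the collapse $|U^{k}-\D_k(\Gr)| = N^k$ (the $\U_k(U)=1$ input) is what lets us drop one full factor of $N^{n}$ relative to the generic Corollary \ref{c:A-B_universal}. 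Once the indices are pinned down, each individual inequality is one of \eqref{f:gen_triangle_new1}, \eqref{f:gen_triangle'}, or the elementary injection argument already spelled out, so no genuinely new idea is required beyond the substitution $A \rightsquigarrow U$ with $\U_k(U)=1$.
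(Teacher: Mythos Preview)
Your ``clean'' route for \eqref{f:universal_addition_new} is exactly what the paper does, but your indices are off: you want $m_i = k$ for all $i$ (so $m = nk$ and $A = U^{nk}$), not $m_i = k-1$. With $m_i = k$, the input $|U^{nk} - \Delta_{k,\dots,k;n}(\Gr^n)| = N^{nk}$ follows from \eqref{f:G_reduced_gen} and $k$--universality of $U$; then \eqref{f:gen_triangle_new1} with $A = U^{nk}$, $B = \Gr^n$, $C = S$ gives
\[
|S|\,N^{nk} \le |U^{nk} \times_{k,\dots,k} \Gr^n - \Delta_{k+1,\dots,k+1;n}(S)| \le N^n\,|U^{nk} - \Delta_{k,\dots,k;n}(S)|\,,
\]
which is \eqref{f:universal_addition_new}. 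Your first suggestion, mirroring the inclusion \eqref{tmp:19.03_1}, does not transfer cleanly here: in that argument one uses universality of $B$ in $\Gr$ to decompose each coordinate of a vector in $\Gr^n$, whereas here you would need to decompose an element of $S \subseteq \Gr^n$, and $U$ is only universal in $\Gr$. Stick with the triangle inequality route.

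For \eqref{f:universal_addition_new2} you are working much harder than necessary. The paper simply observes that this is the special case of \eqref{f:A-B_universal_expansion} with $B = U$ and $n = k$: since $U$ is $k$--universal, $\ov{\U}_k(U) = 1$, and \eqref{f:A-B_universal_expansion} reads
\[
|A+U+S| \ge N\,(|S|/N)^{1/km}\,\ov{\U}_m^{\,1/k}(A)\cdot \ov{\U}_k(U) = N\,(|S|/N)^{1/km}\,\ov{\U}_m^{\,1/k}(A)\,.
\]
No iterated products, no new application of \eqref{f:universal_addition_new}, no extra bookkeeping is needed. Your proposed detour through $(A^m - \Delta_m(\Gr)) \times S'$ and a second pass of \eqref{f:gen_triangle} could in principle be made to work, but it re-derives what \eqref{f:A-B_universal_expansion} already packages.
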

\begin{proof}
    Let $m=nk$.
    Applying  estimate  \eqref{f:gen_triangle_new1} of Lemma \ref{l:gen_triangle_new}, we get 
\[
    |S| N^m = 
    |S| |U^m-\Delta_{k,\dots,k;n} (\Gr^n)| \le |U^m \times_{k,\dots,k} \Gr^n - \Delta_{k+1,\dots,k+1;n} (S)| 
    \le 
    N^n |U^{m}-\Delta_{k,\dots,k;n} (S)| 
\]
as required. 
    Bound \eqref{f:universal_addition_new2} follows from \eqref{f:A-B_universal_expansion}. 
This completes the proof.
$\hfill\Box$
\end{proof}

\section{Universality and sets avoiding solutions to linear equations}
\label{sec:Ek}

The 
first 
result of this section shows that universal sets (or, more generally, sets with large $\U_n$) always contain solutions of all linear equations. It can be considered as a combinatorial reformulation of the first part of Kelley--Meka proof \cite{kelley2023strong}.

\begin{proposition}
    Let $n\ge 3$ be an integer, $A\subseteq \Gr$ be a set, $|A| = \d N$ and $A$ has no solutions to the equation 
\begin{equation}\label{f:linear_eq}
    \a_1 x_1 + \dots + \a_n x_n = \beta \,, \quad \quad x_j \in A,\, j\in [n] \,,
\end{equation}
    where $\a_1,\dots,\a_n$ are coprime to $|\Gr|$.
    Then for $n=3$ one has 
\begin{equation}\label{f:linear_eq2_n=3}
    \un (A) \le \d^{-1} \log (1/\d) \,,
\end{equation}
    and for $n>3$, we obtain 
\begin{equation}\label{f:linear_eq2}
    \un (A) \le (2\log (1/\d) )^{\frac{1}{[n/2]-1}} \,.
\end{equation}
    Finally, 
    one has 
    $\ov{\U}_{m} (A) < 7/8$ for all $m$ greater than 
\begin{equation}\label{f:linear_eq3}
    (3\log (1/\d) )^{\frac{1}{[n/2]-1}} \,.
\end{equation}    
\label{p:linear_eq}
\end{proposition}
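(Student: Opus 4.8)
The heart of the matter is the following dichotomy: if $A$ (of density $\delta$) avoids the linear equation \eqref{f:linear_eq}, then $A$ cannot be too universal, because a sufficiently universal set is forced to realize every linear equation. The cleanest route is to show that a $k$-universal set $U$ (or a set with $\overline{\U}_m(U)$ close to $1$) contains a solution to \eqref{f:linear_eq}, and then contrapose. I would organize the argument around counting solutions of $\alpha_1 x_1 + \dots + \alpha_n x_n = \beta$ with all $x_j \in A$ and showing this count is positive once universality is large enough relative to $\delta$.

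\smallskip

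The key idea is to "split" the $n$ variables into two halves and exploit that a universal set behaves like the whole group after a suitable number of intersections. Concretely, write $n = 2r$ or $n = 2r+1$ with $r = [n/2]$. By relabelling and rescaling (recall $\un$ and $\U_n$ are invariant under $x \mapsto \lambda x$ with $(\lambda,N)=1$, and under translation, so we may clear the $\alpha_i$ and $\beta$), the equation becomes $x_1 + \dots + x_r = y_1 + \dots + y_{r}$ (plus possibly one extra term). The number of solutions to $x_1 + \dots + x_r \in A^{\times r}$ with a prescribed sum equals $(A * \dots * A)(s)$, an $r$-fold convolution; by Cauchy–Schwarz / Parseval the $\ell^2$-mass of this convolution is $\E$-type quantity $\|A\|_{\E_{?}}$. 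The bridge to universality is inequality \eqref{f:gen_triangle'}, i.e. $|A+S| \ge (|S|/N)^{1/m}\,\overline{\U}_m(A)\,N$, or more precisely the higher-sumset formulation: if $\overline{\U}_m(A)$ is close to $1$, then $A^m - \Delta_m(\Gr)$ fills up almost all of $\Gr^m$, which forces the convolution $(A*\cdots*A)$ to be spread out and hence to have large support / large $\ell^2$ norm — enough that the Fourier side cannot concentrate, so the equation has $\Omega(\delta^n N^{n-1})$ solutions. This is exactly the "combinatorial reformulation of the first part of the Kelley–Meka proof" the author advertises: large $\U_m$ plays the role of the bound on $\|f_A\|_{\E_k}$.

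\smallskip

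For the quantitative exponents I would run the self-improvement inequality \eqref{f:AAk_mult_U}, $\U_{nm+1}(A) \le \U_{n+1}^m(A)$, which tells us that $\overline{\U}_m(A)$ decays like a power as $m$ grows once it ever drops below $1$; combined with the threshold from the counting step — that $\overline{\U}_m(A) \ge 7/8$ (resp. $\overline\U_m(A) = 1$, i.e. $m \le \un(A)$) is incompatible with avoiding the equation unless $m \lesssim (\log(1/\delta))^{1/(r-1)}$ — this yields \eqref{f:linear_eq2} and \eqref{f:linear_eq3}. The exponent $\frac{1}{[n/2]-1}$ should emerge because splitting into two halves of size $r=[n/2]$ and iterating the "move to higher dimension" costs a product over $r-1$ steps, and $2\log(1/\delta)$ resp. $3\log(1/\delta)$ are the constants one gets from $(7/8)$- versus $1$-thresholds after tracking the $\log$ in $|A_X| \ge \delta^m N$. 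The case $n=3$ is special: there is no second half to play with, so one falls back on the cruder bound \eqref{f:un_upper}-style estimate $|A_X| \ge \delta^{\un(A)} N$ together with $A_X - X \subseteq A$ and a direct Kelley–Meka / Bogolyubov-type argument on the single convolution $A * A$, giving the weaker $\delta^{-1}\log(1/\delta)$ in \eqref{f:linear_eq2_n=3}.

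\smallskip

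The main obstacle I expect is making the counting step \emph{effective} with the right constants: one must show that $\overline{\U}_m(A)$ bounded away from $0$ (say $\ge 7/8$) genuinely forces a positive — indeed $\Omega(\delta^n N^{n-1})$ — count of solutions, and this requires controlling the Fourier coefficients of $A$ away from the trivial character using only the combinatorial fullness of $A^m - \Delta_m(\Gr)$, not a spectral hypothesis. The trick is presumably to use \eqref{f:new_diff_energy} or \eqref{f:gen_triangle'} to convert "$A^m-\Delta_m(\Gr)$ is almost all of $\Gr^m$" into "$A+S$ is almost all of $\Gr$ for every not-too-small $S$", apply this with $S$ a level set of a convolution, and conclude that the convolution cannot avoid a large set — a wrapper-type argument in the spirit of Theorem \ref{t:croot2012some}. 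Getting the bookkeeping of the exponents and the constants $2$, $3$, $7/8$ to line up is the delicate part; the structural inputs are all already in hand.
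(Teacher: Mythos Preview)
Your proposal reaches for heavy machinery (Fourier, convolution counts, Kelley--Meka-style spectral control) and leaves the central step openly unresolved, whereas the paper's proof is entirely elementary and uses no Fourier analysis at all. The key idea you are missing is that Corollary~\ref{c:un_prod} gives $\un(A_1+\dots+A_l)\ge \prod_j \un(A_j)$, so the $l$-fold sum $U_1=\alpha_1\cdot A+\dots+\alpha_l\cdot A$ with $l=[n/2]-1$ is already $\un(A)^{l}$-universal. Now one simply applies the expansion bound \eqref{f:universal_addition} once: $|\alpha_{l+1}\cdot A+U_1|\ge N\delta^{1/\un(A)^l}>N(1-\un(A)^{-l}\log(1/\delta))$, and the same for the other half $U_2$. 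If $A$ avoids the equation, then $\alpha_{l+1}\cdot A+U_1$ and $\beta-(\alpha_{l+2}\cdot A+U_2)$ are disjoint; both exceeding $N/2$ is the contradiction, and this happens exactly when $\un(A)^l>2\log(1/\delta)$. The $n=3$ case is even simpler: there is only one half, and $|\alpha_1\cdot A+\alpha_2\cdot A|\le N-|A|$ combined with the expansion bound gives $\delta<\un(A)^{-1}\log(1/\delta)$ directly --- no Bogolyubov or convolution argument is needed. For the $\overline{\U}_m$ statement one replaces Corollary~\ref{c:un_prod} by its quantitative form \eqref{f:A-B_universal_sumsets} and uses \eqref{f:gen_triangle'} in place of \eqref{f:universal_addition}, with the threshold $7/8$ chosen so that $(7/8)^{m/(m-1)}\delta^{1/m^l}>1/2$.

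A separate correctness issue: your rescaling claim ``we may clear the $\alpha_i$'' is wrong as stated. Invariance of $\un$ under dilation tells you $\un(\alpha_j\cdot A)=\un(A)$, but the sets $\alpha_j\cdot A$ are distinct, so the equation does not reduce to $x_1+\dots+x_r=y_1+\dots+y_r$ with all variables in a single copy of $A$. The paper handles this correctly by keeping the dilated copies $\alpha_j\cdot A$ separate and only using that each has the same universality as $A$.
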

\begin{proof}
    By Corollary  \eqref{c:un_prod} we know that for $U = \a_1 \cdot A + \dots + \a_{n-2} \cdot A$ 
    one has $\un (U) \ge \un^{n-2} (A)$. 
    Since the set $A$ has no solutions to \eqref{f:linear_eq}, 
    it follows that 
\[
    N-|A|\ge |U+\a_{n-1} \cdot A| > N(1-\un^{-1} (U) \log (1/\d)) 
\]
    and in the case $n=3$ the result follows. 
    For $n>3$ let us write $n=l+l'+2$, where $l' \ge l=[n/2]-1$ and define 
    $U_1 = \a_1 \cdot A + \dots + \a_{l} \cdot A$, $U_2 = \a_{l+3} \cdot A + \dots + \a_{n} \cdot A$.
    Now observe that the sets $\a_{l+1} \cdot A+U_1$, $-(\a_{l+2} \cdot A+U_2)$
    both have sizes greater than $N/2$ and thus they intersect.  
    Thus 
    \eqref{f:linear_eq2} follows.

    Finally, assume that $\ov{\U}_{m} (A) \ge 7/8$, where $m$ is given by \eqref{f:linear_eq3}. 
    Using  inequality \eqref{f:A-B_universal_sumsets}, combining with \eqref{f:gen_triangle'}, we obtain 
\[
    |\a_{l+1} \cdot A +U_1| \ge \d^{1/m^{l}} \ov{\U}_{m^l} (U_1) N 
        \ge  
        \d^{1/m^{l}} \ov{\U}_{m} (A)^{\frac{m^l-1}{m^l-m^{l-1}}} N
        \ge 
        \d^{1/m^{l}} \ov{\U}_{m} (A)^{\frac{m}{m-1}} N
        \,.
\]
    and similar for $|\a_{l+2} \cdot A+U_2|$.
    Therefore by our choice of $m$, we get 
\[
    |\a_{l+1} \cdot A+U_1| > 3N/4 \cdot (1- m^{-l} \log(1/\d))  \ge N/2 \,.
\]
This completes the proof.
$\hfill\Box$
\end{proof}

\bp 

In the next lemma we show that the uniformity in the Kelly--Meka sense implies universality. Of course, one can relax condition \eqref{cond:Ek_norm} using the H\"older inequality but we do not need this step.

\begin{lemma}
    Let $\eps \in (0,1)$ be a real number, $k$ be a positive integer, $A\subseteq \Gr$ be a set, $|A|=\d N$ and 
\begin{equation}\label{cond:Ek_norm}
    \|f_A \|^{2l}_{\E_l} = \sum_{x} (f_A \circ f_A)^{l} (x) \le \eps^{2l} \d^{2l} N^{l+1} \,, \quad \quad \forall l\in [k] \,.
\end{equation}
    Then 
\begin{equation}\label{f:Ek_norm_U}
    \ov{\U}_k (A) > \frac{1}{1+\eps^2} \,.
\end{equation}    
\label{l:uniformity_implies_universality}
\end{lemma}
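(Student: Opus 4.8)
The plan is to estimate $\U_k(A)$ from below by counting $k$-tuples $(z_1,\dots,z_k)\in\Gr^k$ admitting a common shift into $A$, and to relate the complementary count to the $\E_l$-norms of the balanced function $f_A = A - \delta$. Write $N^k \cdot \U_k(A) = |A^k - \Delta_k(\Gr)|$, which counts tuples $x$ such that $A_X \neq \emptyset$ for $X = \{x_1,\dots,x_k\}$ (in the notation of the excerpt, $x$ ranges over $\Gr^k$ and $X$ is the associated multiset of coordinates). Equivalently, the ``bad'' set of tuples is $\{x : A_X = \emptyset\}$, i.e. $\sum_{w} \prod_{j=1}^k A(w - x_j) = 0$. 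So I would study the quantity $T := \sum_{x\in\Gr^k}\left(\sum_w \prod_{j=1}^k A(w-x_j)\right) = \sum_w \left(\sum_{y} A(y)\right)^k$ — wait, that just gives $|A|^k N$; the point is rather to show the number of tuples with $A_X = \emptyset$ is small, which is a positivity/second-moment argument on $(A\circ A\circ\cdots)$.

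The cleaner route: expand $A = \delta + f_A$ and look at the count of tuples weighted so that $A_X\neq\emptyset$. Concretely, for each $x\in\Gr^k$ the indicator that $A_X\neq\emptyset$ is at least $\frac{1}{?}$ times $\sum_w \prod_j A(w-x_j)$ when the latter is positive — but a sharper handle is to use Cauchy–Schwarz in the form of \eqref{f:new_diff_energy} or a direct inclusion–exclusion. I would instead bound the number of bad tuples $B_k := \#\{x : \sum_w\prod_j A(w-x_j) = 0\}$ by relating $N^k - B_k \ge \frac{(\sum_w (\cdot))^2}{\sum_w(\cdot)^2}$-type inequalities; but the quantity $\sum_{x}\left(\sum_w\prod_j A(w-x_j)\right)^2 = \sum_{w,w'} (A\circ A)^k(w-w')\cdot(\text{something})$, and after substituting $A = \delta + f_A$, the main term $\delta^{2k}N^{k+2}$ appears and the error terms are governed precisely by $\sum_x (f_A\circ f_A)^l(x)$ for $1\le l\le k$, via the multinomial expansion of $(A\circ A)^k = (\delta^2 N + f_A\circ f_A)^k$ (using $A\circ A = \delta^2 N + \delta(f_A\circ 1 + 1\circ f_A) + f_A\circ f_A$ and that $f_A\circ 1 = 0$ since $f_A$ has mean zero). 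This is where hypothesis \eqref{cond:Ek_norm} enters: each term $\binom{k}{l}(\delta^2 N)^{k-l}\sum_x(f_A\circ f_A)^l(x)$ is at most $\binom{k}{l}(\delta^2 N)^{k-l}\eps^{2l}\delta^{2l}N^{l+1} = \binom{k}{l}\eps^{2l}\delta^{2k}N^{k+1}$, so the total error is at most $\delta^{2k}N^{k+1}\sum_{l\ge 1}\binom{k}{l}\eps^{2l}\le \delta^{2k}N^{k+1}((1+\eps^2)^k - 1)$.

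Putting the pieces together with Cauchy–Schwarz: let $P(x) = \sum_w\prod_j A(w-x_j)\ge 0$. Then $\sum_x P(x) = |A|^k N = \delta^k N^{k+1}$ exactly, and $\sum_x P(x)^2 = \sum_{w,w'}\prod_j(A\circ A)(w'-w)\cdot$(no, one must be careful: $\sum_x P(x)^2 = \sum_{w,w'}\sum_x\prod_j A(w-x_j)A(w'-x_j) = \sum_{w,w'}\prod_{j=1}^k (A\circ A)(w'-w)\cdot 1$ over the free $x_j$'s) $= N^{?}\sum_{u}(A\circ A)^k(u)$ after collecting — I would verify the exact power of $N$, but the shape is $\sum_x P(x)^2 = N^{k}\sum_u (A\circ A)^k(u)$ is wrong dimensionally; the correct statement is $\sum_x P(x)^2 = \sum_{w,w'}\prod_j (A\circ A)(w-w') = \sum_{w,w'}(A\circ A)^k(w-w') = N\sum_u (A\circ A)^k(u)$. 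Hence by Cauchy–Schwarz over the support $\{x : P(x)>0\}$ of size $N^k\U_k(A)$, we get $(\delta^k N^{k+1})^2 = \left(\sum_x P(x)\right)^2 \le N^k\U_k(A)\cdot N\sum_u(A\circ A)^k(u)$, so $\U_k(A)\ge \dfrac{\delta^{2k}N^{k+1}}{\sum_u (A\circ A)^k(u)}$. Now expand the denominator: $\sum_u(A\circ A)^k(u) = \sum_u(\delta^2 N + f_A\circ f_A(u))^k \le \delta^{2k}N^{k+1} + \delta^{2k}N^{k+1}((1+\eps^2)^k-1) = \delta^{2k}N^{k+1}(1+\eps^2)^k$, where I used $f_A\circ 1 = 0$ to kill cross terms and the bound on the error terms above. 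Therefore $\U_k(A)\ge (1+\eps^2)^{-k}$, i.e. $\ov{\U}_k(A) = \U_k(A)^{1/k}\ge \frac{1}{1+\eps^2}$, as claimed.

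The main obstacle I anticipate is bookkeeping the powers of $N$ correctly in the two moment computations $\sum_x P(x)$ and $\sum_x P(x)^2$ and in the expansion of $(A\circ A)^k = (\delta^2 N + f_A\circ f_A)^k$, including checking that all cross terms involving a single factor $f_A$ vanish because $f_A$ has zero mean (so that only the ``pure'' terms $(\delta^2 N)^{k-l}(f_A\circ f_A)^l$ survive). A secondary point requiring care is that the Cauchy–Schwarz step is applied on the support of $P$, giving exactly the factor $N^k\U_k(A)$, and that $\sum_x P(x) = \delta^k N^{k+1}$ holds with equality (no sign issues since $A\ge 0$). Once these normalizations are pinned down, the inequality $\sum_{l=1}^k\binom{k}{l}\eps^{2l} = (1+\eps^2)^k - 1$ makes the final bound immediate, and no upper cutoff on $k$ is needed.
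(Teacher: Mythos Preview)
Your argument is essentially the paper's own proof: both expand $\E_k(A)=\sum_u (A\circ A)^k(u)=\sum_u(\d^2 N+f_A\circ f_A)^k(u)$, bound the error terms via the hypothesis, and then apply Cauchy--Schwarz on the support of $P$ (equivalently, the inequality $|A|^{2k}\le \E_k(A)\,|A^{k-1}-\Delta_{k-1}(A)|$). One small refinement the paper makes that you should keep: since $\sum_u (f_A\circ f_A)(u)=\big(\sum_y f_A(y)\big)^2=0$, the $l=1$ term vanishes exactly and the error is bounded by $\d^{2k}N^{k+1}\sum_{l=2}^k\binom{k}{l}\eps^{2l}<\d^{2k}N^{k+1}\big((1+\eps^2)^k-1\big)$, which is what gives the \emph{strict} inequality $\ov{\U}_k(A)>\frac{1}{1+\eps^2}$ rather than the $\ge$ your final line produces.
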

\begin{proof}
    We have $(A\circ A)(x) = \d^2 N + (f_A\circ f_A)(x)$ and hence 
\[
    \E_k (A) = \sum_{x} (A\circ A)^k (x) = \d^{2k} N^{k+1} + \sum_{l=2}^k \binom{k}{l} (\d^2 N)^{k-l} \sum_{x} (f_A\circ f_A)^l (x) = \d^{2k} N^{k+1} + \mathcal{E} \,.
\]
    Using our assumption \eqref{cond:Ek_norm}, 
    one obtains 
\[
    \mathcal{E} \le \d^{2k} N^{k+1} \sum_{l=2}^k \binom{k}{l} \eps^{2l} < \d^{2k} N^{k+1} ((1+\eps^2)^k -1) \,.
\]
    Combining the last two bounds with the Cauchy--Schwarz inequality (see, e.g., \eqref{f:new_diff_energy}), we get 
\[
    |A|^{2k} \le \E_k (A) |A^{k-1} - \D_{k-1} (A)| < \d^{2k} N^{k} (1+\eps^2)^k |A^{k} - \D_{k} (\Gr)| \,.
\]
    It follows that $\ov{\U}_k (A) > (1+\eps^2)^{-1}$. 
This completes the proof.
$\hfill\Box$
\end{proof}

\bp

Now let us obtain the main result of this section that 
shows 
how to use universality 
to control  the number of the solutions to any linear equation \eqref{f:E_k_linear_new_cond}.

\begin{theorem}
    Let $n\ge 4$ be an integer, $A\subseteq \Gr$ be a set, $|A| = \d N$ and $A$ has no solutions to the equation 
\begin{equation}\label{f:E_k_linear_new_cond}
    \a_1 x_1 + \dots + \a_n x_n = \beta \,, \quad \quad x_j \in A,\, j\in [n] \,,
\end{equation}
    where $\a_1,\dots,\a_n$ are coprime to $|\Gr|$.
    Also, let $\eps \in (0,1/8]$ be any number.
    Then there is $l \le (3\log (1/\d) )^{\frac{1}{[n/2]-1}}+1$ such that 
\begin{equation}\label{f:E_k_linear_new}
    \|f_A \|^{2l}_{\E_l} \ge \eps^{2l} \d^{2l} N^{l+1} \,.
\end{equation}
\label{t:E_k_linear_new}
\end{theorem}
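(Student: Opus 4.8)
The plan is to argue by contraposition: assume that \eqref{f:E_k_linear_new} fails for \emph{every} $l$ in the range $\{2,3,\dots, l_0\}$, where $l_0 = \lfloor (3\log(1/\d))^{1/([n/2]-1)}\rfloor + 1$, and derive that $A$ must contain a solution to \eqref{f:E_k_linear_new_cond}, contradicting the hypothesis. So suppose $\|f_A\|_{\E_l}^{2l} < \eps^{2l}\d^{2l} N^{l+1}$ for all $l\in[l_0]$ (the case $l=1$ being automatic since $\sum_x (f_A\circ f_A)(x) = 0$, and anyway we only need $l\ge 2$ as in \eqref{cond:Ek_norm}). The point is that this is exactly condition \eqref{cond:Ek_norm} of Lemma \ref{l:uniformity_implies_universality} with $k = l_0$, so that lemma yields $\ov{\U}_{l_0}(A) > (1+\eps^2)^{-1} \ge (1 + 1/64)^{-1} > 7/8$, using $\eps \le 1/8$.

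Next I would feed this lower bound on $\ov{\U}_{l_0}(A)$ into the machinery already used in Proposition \ref{p:linear_eq}. Write $n = l + l' + 2$ with $l' \ge l = [n/2]-1$, and set $U_1 = \a_1\cdot A + \dots + \a_l\cdot A$, $U_2 = \a_{l+3}\cdot A + \dots + \a_n\cdot A$, exactly as there. Since $\ov{\U}_{l_0}(A) > 7/8$ and $\ov{\U}$ is monotone along the subsequence guaranteed by \eqref{f:AAk_mult_U} (so in particular $\ov{\U}_m(A) \ge \ov{\U}_{l_0}(A)$ for a suitable $m \le l_0$ with $m^l$ controlled), I would apply \eqref{f:A-B_universal_sumsets} to get $\ov{\U}_{m^l}(U_1) \ge \ov{\U}_m(A)^{(m^l-1)/(m^l-m^{l-1})} \ge \ov{\U}_m(A)^{m/(m-1)}$, and then \eqref{f:gen_triangle'} with $|S|=|A|=\d N$ gives
\[
    |\a_{l+1}\cdot A + U_1| \ge \d^{1/m^l}\,\ov{\U}_m(A)^{m/(m-1)}\, N \,.
\]
By the choice of $l_0$ (so that $m^l \ge 3\log(1/\d)$, forcing $\d^{1/m^l} \ge e^{-1/3} > 3/4$ after adjusting constants, and $m^{-l}\log(1/\d) \le 1/3$), this is $> \frac{3}{4}N\cdot\frac{2}{3} = \frac{1}{2}N$, and symmetrically for $|\a_{l+2}\cdot A + U_2|$. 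Two subsets of $\Gr$ each of size strictly greater than $N/2$ must intersect; translating, $(\a_{l+1}\cdot A + U_1) \cap (-(\a_{l+2}\cdot A + U_2)) \ne \emptyset$ produces $a_1,\dots,a_n\in A$ with $\a_1 a_1 + \dots + \a_n a_n = 0$, i.e. a solution of \eqref{f:E_k_linear_new_cond} with $\beta=0$; for general $\beta$ one shifts one of the sets by a fixed element, which does not change sizes. This is the contradiction.

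The main obstacle is purely bookkeeping: making the constants line up so that the single threshold $l_0 = \lfloor(3\log(1/\d))^{1/([n/2]-1)}\rfloor+1$ simultaneously (i) exceeds the value of $k$ needed to invoke Lemma \ref{l:uniformity_implies_universality} and (ii) is large enough — namely $m^l$ large enough, where $m \le l_0$ and $l = [n/2]-1$ — that the two expansion estimates both clear $N/2$. The factor $7/8$ in Proposition \ref{p:linear_eq} and the bound $\eps \le 1/8$ are tuned precisely so that $(1+\eps^2)^{-1} > 7/8$, which is why the hypothesis on $\eps$ appears; the constant $3$ inside the logarithm (versus $2$ in \eqref{f:linear_eq2}) absorbs the loss from $\ov{\U}_m(A)^{m/(m-1)}$ being slightly below $\ov{\U}_m(A)$ and from $\d^{1/m^l}$ being slightly below $1$. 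I would carry out this constant-chasing once, carefully, at the end.
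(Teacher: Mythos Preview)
Your proposal is correct and follows essentially the same route as the paper: argue by contraposition, apply Lemma~\ref{l:uniformity_implies_universality} to obtain $\ov{\U}_{l_0}(A) > (1+\eps^2)^{-1} > 7/8$, and then derive a contradiction from the expansion argument for the sets $\a_{l+1}\cdot A + U_1$ and $-(\a_{l+2}\cdot A + U_2)$. The only difference is one of packaging: the paper does not reprove the expansion step but simply quotes bound~\eqref{f:linear_eq3} of Proposition~\ref{p:linear_eq}, which is exactly the statement that $\ov{\U}_m(A) < 7/8$ once $m$ exceeds $(3\log(1/\d))^{1/([n/2]-1)}$, so the contradiction is immediate. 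Your unpacking of that step (and your remark that a translation handles general $\beta$) is fine and arguably more self-contained; the monotonicity worry you raise is unnecessary, since the expansion argument in Proposition~\ref{p:linear_eq} works verbatim for any $m$ at least the threshold value, not just the exact threshold.
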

\begin{proof}
    Assume that \eqref{f:E_k_linear_new} does not hold, 
    otherwise there is nothing to prove. 
    By assumption $\eps \in (0,1/8]$ and hence using Lemma \ref{l:uniformity_implies_universality}, we see that, in particular,  $\ov{\U}_k (A) > \frac{1}{1+\eps^2} >7/8$, where $k=(3\log (1/\d) )^{\frac{1}{[n/2]-1}} +1$. 
    This contradicts bound \eqref{f:linear_eq3} of Proposition \ref{p:linear_eq}.  
$\hfill\Box$
\end{proof}

\bp 

Theorem \ref{t:E_k_linear_new} 
allows us to 
say something about sets in $\F_p^s$, 
avoiding solutions to affine equations.

\begin{corollary}
    Let $p>2$  be a prime number, $n\ge 4$ be an integer, $A\subseteq \F_p^s$ be a set, $|A| = \d N$ and $A$ has no solutions to the equation 
\begin{equation}\label{f:linear_equations_cond}
    \a_1 x_1 + \dots + \a_n x_n = 0 \,, \quad \quad x_j \in A,\, j\in [n] \,,
\end{equation}
    where $\a_1,\dots,\a_n \in \F^*_p$ and $\a_1 + \dots + \a_n = 0$. 
    Then 
\begin{equation}\label{f:linear_equations}
    |A| \ll q^{s-s^{\frac{1}{5}-\frac{2}{25[n/2]-15}}} \,.
\end{equation}
\label{c:linear_equations}
\end{corollary}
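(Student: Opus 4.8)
The plan is to combine Theorem \ref{t:E_k_linear_new} with the known bound on the $\E_l$-norm of balanced functions in $\F_p^s$ coming from the polynomial/slice-rank method. Concretely, since $A$ avoids solutions to \eqref{f:linear_equations_cond} with $\a_1+\dots+\a_n=0$, Theorem \ref{t:E_k_linear_new} tells us that there is some $l \le (3\log(1/\d))^{\frac{1}{[n/2]-1}}+1$ with $\|f_A\|_{\E_l}^{2l} \ge \eps^{2l}\d^{2l} N^{l+1}$ (taking $\eps=1/8$, say), where $N=p^s=q^s$. In other words, the balanced function $f_A$ must have a large $\E_l$-norm for some small $l$; equivalently, $f_A$ is \emph{not} Kelley--Meka uniform at level $l$. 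The strategy is then to show that large $\E_l$-norm of $f_A$ forces a large \emph{spectrum} of $A$ (via Parseval / the standard dyadic pigeonholing that converts an $\E_l$-norm lower bound into many large Fourier coefficients), and that a set whose spectrum is large must have density close to $1$ unless $|A|$ is exponentially smaller than $N$.

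The key steps, in order, are: (i) apply Theorem \ref{t:E_k_linear_new} to obtain the level $l$ and the $\E_l$-norm lower bound; (ii) translate $\|f_A\|_{\E_l}^{2l}\ge \eps^{2l}\d^{2l}N^{l+1}$ into a lower bound on $|\Spec'_{\rho}(A)|$ for a suitable threshold $\rho=\rho(\eps,l)$ — writing $\|f_A\|_{\E_l}^{2l}=\sum_x (f_A\circ f_A)^l(x)$ and expanding $\widehat{f_A\circ f_A}=|\widehat{f_A}|^2$ via the Fourier side, one gets $\|f_A\|_{\E_l}^{2l}=N^{-l}\sum_{\chi_1+\dots+\chi_l=0}\prod|\widehat{f_A}(\chi_i)|^2$, and the main contribution must come from characters in the spectrum, so by Parseval $|\Spec'_\rho(A)|$ is at least roughly $\rho^{-2l}$-type large relative to $\eps$ and $\d$; (iii) invoke the structure of $\F_p^s$: the spectrum sits inside $\F_p^s$, so $|\Spec'_\rho(A)|\le p^s$, and combining with (ii) (where the lower bound on $|\Spec'_\rho(A)|$ behaves like $\exp$ of something in $s^{1/(\text{const}\cdot[n/2])}$ after substituting $l\approx (3\log(1/\d))^{1/([n/2]-1)}$ and $\log(1/\d)=(s-\log_q|A|)\log q$) to get the stated bound $|A|\ll q^{s-s^{1/5-2/(25[n/2]-15)}}$.

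The main obstacle, and the step requiring genuine care, is the bookkeeping in step (iii): one must feed $l\le (3\log(1/\d))^{\frac{1}{[n/2]-1}}+1$ into the spectrum lower bound, note that the spectrum is a subset of a group of size $q^s$, and solve the resulting inequality for $\d$. Writing $\d = q^{-(s-t)}$ so that $\log(1/\d)\approx (s-t)\log q$, the constraint becomes roughly $q^s \ge \exp\bigl(c\cdot ((s-t)\log q)^{\text{something}/([n/2]-1)}\bigr)$, which after optimising the free exponent (and absorbing the $\eps=1/8$ and the $+1$ in $l$) yields $s-t \le s^{1/5 - 2/(25[n/2]-15)}$, i.e. $t\ge s - s^{1/5-2/(25[n/2]-15)}$, which is exactly \eqref{f:linear_equations}. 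The only other point to watch is that the $\E_l$-norm-to-spectrum conversion loses polynomial-in-$l$ and polynomial-in-$\eps^{-1}$ factors; these are harmless since $l$ is only $(\log(1/\d))^{o(1)}$ and they get swallowed by the $\ll$ in the final bound, but one should check they do not degrade the exponent $1/5$.
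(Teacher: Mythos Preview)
Your step (i) is correct and matches the paper, but steps (ii) and (iii) do not work.

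The central gap is in (ii): a lower bound on $\|f_A\|_{\E_l}$ does \emph{not} yield a lower bound on $|\Spec'_\rho(A)|$. Using the Fourier expansion you wrote,
\[
\|f_A\|_{\E_l}^{2l}=N^{-(l-1)}\sum_{\chi_1+\dots+\chi_l=0}\,|\widehat{f_A}(\chi_1)|^2\cdots|\widehat{f_A}(\chi_l)|^2\,,
\]
a single pair of large coefficients $\widehat{f_A}(\chi_0),\widehat{f_A}(-\chi_0)$ of size $\approx \eps\d N$ already makes this quantity at least $\eps^{2l}\d^{2l}N^{l+1}$ up to constants (take $l/2$ copies of each when $l$ is even). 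Parseval goes the other way: it gives the \emph{upper} bound $|\Spec'_\rho(A)|\le (\rho^2\d)^{-1}$, never a lower bound. Consequently step (iii) is vacuous: the inequality $|\Spec'_\rho(A)|\le p^s$ is always true by orders of magnitude, so comparing a putative lower bound against $p^s$ yields no constraint on $\d$. The mention of the polynomial/slice--rank method is also a red herring; that method plays no role here.

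What the paper actually does after step (i) is a \emph{density increment}. From the inequality $\|f_A\|_{\E_l}^{2l}\ge \eps^{2l}\d^{2l}N^{l+1}$ with $l\le p_*:=(3\log(1/\d))^{1/([n/2]-1)}+1$, one invokes \cite[Lemma~6]{BS_improvement} (the Kelley--Meka/Bloom--Sisask machinery in $\F_p^s$) to find a coset $V+x$ of a subspace with
\[
\mathrm{codim}(V)\ll p_*^2\,\mathcal{L}(\d)^4
\]
on which the relative density of $A$ increases by a factor $1+\Omega(1)$. Iterating at most $\mathcal{L}(\d)$ times, the total codimension lost is $\ll p_*^2\,\mathcal{L}(\d)^5$; for the process not to terminate trivially this must be $\le s$, which forces $\mathcal{L}(\d)^{5+2/([n/2]-1)}\gg s$, i.e.\ $\log(1/\d)\gg s^{([n/2]-1)/(5[n/2]-3)}=s^{1/5-2/(25[n/2]-15)}$. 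This is where the exponent in \eqref{f:linear_equations} comes from; it cannot be recovered from a spectrum--cardinality argument.
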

\begin{proof}
    Let $\mathcal{L} (\d)= \log(2/\d)$, and $p_* = (3\mathcal{L} (\d))^{\frac{1}{[n/2]-1}} +1$.
    Applying 
    Theorem \ref{t:E_k_linear_new} with $\eps:=1/8$, we see that \eqref{f:E_k_linear_new} takes place. 
    Now it remains to use the arguments of \cite{BS_improvement}. 
    Namely, 
    applying 
    \cite[Lemma 6]{BS_improvement} with $\a=\d$, $\a_1=\a_2 = \d^{p_*}$, we find a subspace $V\le \F_p^s$  and $x\in \F_p^s$ such that $|A\cap (V+x)| \ge \d (1+\eps/2)$ and 
\[
    \mathrm{codim} (V) \ll_\eps  \mathcal{L}^2 (\d) \mathcal {L}^2 (\d^{p_*}) \ll p^2_* \mathcal{L}^4 (\d) \,.
\]
    Applying the usual density increment argument, 
    one obtains a contradiction after at most $\mathcal{L}(\d)$ steps and after some calculations 
    we arrive at  \eqref{f:linear_equations}.
This completes the proof.
$\hfill\Box$
\end{proof}

\bp

The same method works for general abelian  groups.
Indeed, using  Lemma 8 from  \cite{BS_improvement} instead of \cite[Lemma 6]{BS_improvement} and repeating the argument will reduce roughly two logarithms from the main result of \cite{BS_improvement} in the case of large $n>3$. Nevertheless, we do not include the final bound due to the fact that the better estimate is contained in \cite{Schoen_convex_eq} (also, see previous paper \cite{Kosciuszko_eq}).

\section{On covering numbers}
\label{sec:covering}

Let $\Gr$ be an abelian group and $A\subseteq \Gr$ be a set.
Further put $\Omega = \Omega(A) = A^c$. 
We have 
\begin{equation}\label{f:cov_Omega}
    \cov (A) 
    = \cov^{+} (A)= \min_{X\subseteq \Gr} \{ |X| ~:~ A+X = \Gr \} = \min_{X\subseteq \Gr} \{ |X| ~:~ \Omega_X = \emptyset \} \,.
\end{equation}
In other words, for any $Y$, $|Y|\le \cov (A)-1 := k$ one has $\Omega_Y \neq \emptyset$.
The last statement is equivalent to $\Omega^{k-1} - \Delta_{k-1} (\Omega) = \Gr^{k-1}$ and thus $\Omega$ is a $k$--universal set. 
In particular, we have our basic formula \eqref{f:cov_un} and using \eqref{f:un_upper} as well as \eqref{f:cov_un}, we obtain the second upper bound of \eqref{f:basic_bounds}.
Also, 
formulae \eqref{f:cov_un}, \eqref{f:Moshchevitin} give us 
\begin{equation}\label{f:Omega_nA}
    (\cov (\Omega(A)) - 1)^n +1  \le \cov (\Omega(nA)) \,.
\end{equation}
More generally, for any set $E\subseteq \Gr$ we put 
\begin{equation}\label{f:cov_Omega_E}
    \cov (A; E) = \min_{X\subseteq \Gr} \{ |X| ~:~ E \subseteq A+X \} \,.
\end{equation}
In particular, $\cov (A; \Gr) = \cov(A)$.  
Also,  if $A' \subseteq A$ and  $E' \subseteq E$, then $\cov (A; E) \le \cov (A'; E)$ and $\cov (A; E') \le \cov (A; E)$. 
Notice that 
\begin{equation}\label{f:cov_E_mult}
    \cov (A) = \cov (A; \Gr) \le \cov(A;E) \cdot \cov(E; \Gr) \,.
\end{equation}
Since 
\begin{equation}\label{f:Omega_formulae}
    \Omega (A+X) = \Omega (A)_X
    \quad \quad 
        \mbox{ and }
    \quad \quad 
    \Omega (A_X) = \Omega(A) + X \,,
\end{equation}
it follows that $E\subseteq A+X$ is equivalent to $\Omega(A)_X \subseteq \Omega (E)$. 
In other words,  putting 
$k=\cov (A; E)-1$ we see that for any $z_1,\dots,z_k \in \Gr$ there is $w\in E$ such that $z_1+w,\dots,z_k+w \in \Omega (A)$. 
Another consequence of \eqref{f:Omega_formulae}
that follows from 
\eqref{f:cov_un} is 
\begin{equation}\label{f:Omega_formulae_un}
    \un (A+X) = \cov(\Omega(A)_X) - 1
    \quad \quad 
        \mbox{ and }
    \quad \quad 
    \un (A_X) = \cov(\Omega(A) + X) - 1 \,.
\end{equation}
In a similar way, using the second formula of \eqref{f:Omega_formulae}, the basic identity  \eqref{f:cov_un}, and 
Corollary \ref{c:un_prod}, 
we obtain for any $A \subseteq  \Gr$ that 
\begin{equation}\label{f:cov_As}
    \cov(A_X) \ge (\cov(A)-1) (\cov (X^c) - 1) + 1\,.
\end{equation}
The last formula gives us an interesting connection between $\cov(A)$ and $\cov (A^c)$.

\bp

\begin{lemma}
    Let $A,B \subseteq \Gr$ be any sets.
    Then 
\begin{equation}\label{f:cov_A_Ac}
    |B| \cov(A+B) \ge \cov(A) \ge (\cov(A+B)-1)(\cov (B^c) - 1) +1 \,.
\end{equation}
    In particular, 
\begin{equation}\label{f:cov_A_Ac2}
    \cov(A) \ge (\cov(A-A)-1) (\cov (A^c) - 1) +1\,.
\end{equation}
\label{l:cov_A_Ac}
\end{lemma}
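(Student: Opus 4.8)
The plan is to prove the two inequalities in \eqref{f:cov_A_Ac} separately and then specialize to $B=-A$ to get \eqref{f:cov_A_Ac2}. For the right-hand inequality $\cov(A) \ge (\cov(A+B)-1)(\cov(B^c)-1)+1$, I would first observe that $A \subseteq (A+B)_B$ is the instance $A \subseteq D_B$ of the basic inclusion \eqref{f:KK_inclusion} (with $D = A+B$, noting $(A+B)-B \subseteq A$ only up to a sign convention — more precisely one should work with $A - B$ versus $A + B$; since $\cov$, $\un$ are invariant under $x \mapsto -x$ and under translation, I can freely replace $B$ by $-B$ and use $A \subseteq (A-B)_B$). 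Then, using the monotonicity $\cov(A;E) \le \cov(A';E)$ when $A' \subseteq A$, we get $\cov(A) \le \cov((A+B)_B)$. Wait — that goes the wrong way; instead I would apply \eqref{f:cov_As} directly with the roles set so that $A_X$ there becomes $(A+B)_B$: formula \eqref{f:cov_As} states $\cov(A_X) \ge (\cov(A)-1)(\cov(X^c)-1)+1$, so substituting the sumset $A+B$ for $A$ and $B$ for $X$ yields $\cov((A+B)_B) \ge (\cov(A+B)-1)(\cov(B^c)-1)+1$, and combining with $\cov(A) \ge \cov((A+B)_B)$ (from $(A+B)_{-B} \subseteq A$ and monotonicity of $\cov(\cdot;\Gr)$ under enlarging the set) gives the claim.

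For the left-hand inequality $|B|\,\cov(A+B) \ge \cov(A)$, I would use the submultiplicativity \eqref{f:cov_E_mult}, namely $\cov(A) = \cov(A;\Gr) \le \cov(A; E)\cdot \cov(E;\Gr)$, applied with $E = A+B$: this gives $\cov(A) \le \cov(A; A+B)\cdot \cov(A+B)$, so it suffices to check $\cov(A; A+B) \le |B|$. But that is immediate: taking $X = -B$ (or $X=B$ with the appropriate sign), we have $A + X \supseteq A+B$ is false in general; rather one wants $A + X \supseteq A+B$, which holds with... hmm, the correct observation is that $A+B \subseteq A + B$ trivially covers itself with the $|B|$ translates $\{A+b : b\in B\}$, i.e. $\cov(A; A+B) \le |B|$ by taking $X = B$. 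So $\cov(A) \le |B|\,\cov(A+B)$, as desired.

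Finally, \eqref{f:cov_A_Ac2} follows by putting $B = -A$ in \eqref{f:cov_A_Ac}: then $A+B = A-A$ and $B^c = (-A)^c = -(A^c)$ has $\cov(-(A^c)) = \cov(A^c)$ by the sign-invariance of the covering number, so the right-hand inequality reads $\cov(A) \ge (\cov(A-A)-1)(\cov(A^c)-1)+1$.

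The step I expect to be the main obstacle is bookkeeping the sign conventions: the definitions of $A_X$ and of \eqref{f:KK_inclusion}, \eqref{f:cov_As} are phrased for $A-B$-type objects while the statement is about $A+B$, so I must be careful that each application of \eqref{f:cov_as-style} identities is legitimate after the relevant $x \mapsto -x$ substitution — and that $\cov$, $\un$ are genuinely invariant under negation (they are, since $X \mapsto -X$ is a group automorphism). None of the individual steps is computationally hard; the risk is an off-by-a-sign error that breaks one of the inclusions.
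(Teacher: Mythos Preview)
Your approach is essentially the same as the paper's, and you correctly anticipated where the trouble lies. The paper proves the left inequality exactly as you do (if $(A+B)+Z=\Gr$ then $A+(B+Z)=\Gr$, so $\cov(A)\le |B+Z|\le |B|\,\cov(A+B)$), and for the right inequality applies \eqref{f:cov_As} with the sumset in place of $A$ and $X=-B$, then uses the inclusion $A\subseteq (A+B)_{-B}$.

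The sign bookkeeping in your write-up is not quite right, though, and in the direction that matters. You apply \eqref{f:cov_As} with $X=B$, giving a lower bound on $\cov((A+B)_B)$, but then try to compare with $\cov(A)$ via the inclusion ``$(A+B)_{-B}\subseteq A$''. Both pieces are off: the inclusion goes the other way, $A\subseteq (A+B)_{-B}$ (since $a+b\in A+B$ for every $a\in A,\ b\in B$), and this tells you about $(A+B)_{-B}$, not $(A+B)_B$. The clean fix is to take $X=-B$ in \eqref{f:cov_As}: then $\cov((A+B)_{-B})\ge (\cov(A+B)-1)(\cov((-B)^c)-1)+1$, and since $(-B)^c=-(B^c)$ one has $\cov((-B)^c)=\cov(B^c)$. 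Combining with $A\subseteq (A+B)_{-B}$ (hence $\cov(A)\ge \cov((A+B)_{-B})$) gives the claim. Your specialization $B=-A$ for \eqref{f:cov_A_Ac2} is correct.
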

\begin{proof}
    Let $A+B+Z = \Gr$, where $|Z|=\cov (A+B)$. Using formulae \eqref{f:Omega_formulae}, one obtains $\Omega(A+B+Z) = \Omega (A)_{B+Z} = \emptyset$ and hence
$
    \cov(A) \le \cov(A+B) |B| 
\,.
$
    Now apply \eqref{f:cov_As} with $A=A+B$ and $X=(-B)$.
    It remains to notice that $A \subseteq (A+B)_{-B}$ and hence $\cov((A+B)_{-B}) \le \cov (A)$. 
This completes the proof.
$\hfill\Box$
\end{proof}

\bigskip

Now we are ready to obtain 
the  main result of this section.

\begin{theorem}
    Let $A,B,C,E \subseteq \Gr$ be sets,  $|A| = \a N$, $|B| = \beta N$, and $|C| = \gamma N$.
    Then 
\begin{equation}\label{f:cov_AB_E}
    \cov (A+B; E) \le 
    \a^{-1}
    \log \frac{|B-E|}{|B|} + 1 
    \,,
\end{equation}
    In particular, 
\begin{equation}\label{f:cov_AB}
    \cov (A+B) \le \frac{1}{\a} \log \frac{1}{\beta} + 1 \,.
\end{equation}
    Now if $\a \ge N^{-2/3} \log^2 N$, then 
\begin{equation}\label{f:cov_ABC}
    \cov_{C,\sqrt{\a/8}} (A+B+C) \ll \frac{1}{\a} \cdot  \log^2 \frac{2}{\a \gamma} \cdot \left(  \log \frac{2}{\beta} + \log^2 \frac{2}{\a \gamma} \right) \,.
\end{equation}
\label{t:cov_ABC}
\end{theorem}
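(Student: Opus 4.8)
The three statements are of increasing strength, and the natural order is to prove \eqref{f:cov_AB_E} first, deduce \eqref{f:cov_AB} as a special case, and finally obtain \eqref{f:cov_ABC} by a separate (randomized) argument. For \eqref{f:cov_AB_E} the plan is to run a greedy/density-increment iteration directly on the set $E$: start with $E_0 = E$; as long as $E_i \ne \emptyset$, pick a shift $x_i$ maximizing $|(A+B+x_i)\cap E_i|$, and set $E_{i+1} = E_i \setminus (A+B+x_i)$. The point is to show each step removes a definite proportion of what remains. Here is where universality (or rather the complement viewpoint from \eqref{f:cov_un}--\eqref{f:Omega_formulae}) enters: using the identity $\Omega(A+B)_X \subseteq \Omega(E)$ characterization of covering $E$, together with the expansion estimate \eqref{f:universal_size}/\eqref{f:universal_addition} applied in the contrapositive, one argues that if $\Omega(A+B)$ had $\un(\cdot)$ too large then $A+B+X$ could not reach density close to~$1$ on~$E$ after few steps — more concretely, at each stage the average of $|(A+B+x)\cap E_i|$ over $x$ is $\ge \a |E_i| \cdot |B|/|B-E_i|$ (since translating $B$ by all of $E_i$ still lands in $B - E_i$), so the best $x_i$ removes at least a $\a|B|/|B-E|$ fraction of $E_i$. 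Iterating, $|E_i| \le (1 - \a|B|/|B-E|)^i |E|$, and taking $i = \a^{-1}\frac{|B-E|}{|B|}\log\frac{|B-E|}{|B|}$... — actually the cleaner bookkeeping: after $t$ steps $|E_t| < 1$ once $t > \a^{-1}\log(|B-E|/|B|)$ after normalizing $|E| \le |B-E|$ (which holds as $E \subseteq (B-E) - B + \text{anything}$, or just bound $|E|\le N$ and absorb), giving $\cov(A+B;E) \le \a^{-1}\log(|B-E|/|B|) + 1$.

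For \eqref{f:cov_AB}, specialize $E = \Gr$: then $|B - E| = |B - \Gr| = N$ (assuming $B \ne \emptyset$), so $\log(|B-E|/|B|) = \log(N/|B|) = \log(1/\beta)$, yielding \eqref{f:cov_AB} immediately. This also matches the heuristic from \eqref{f:basic_bounds} and the complement-is-universal principle \eqref{f:cov_un}: $\Omega(A+B)$ behaves like a universal set of the relevant order.

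For \eqref{f:cov_ABC} the plan is different — we need not just a small covering set $X$ for $A+B+C$ but one that is additionally \emph{pseudorandom} relative to $C$ in the sense of \eqref{def:covering_f_eps}. The approach is: first apply \eqref{f:cov_AB} (or rather \eqref{f:cov_AB_E}) to $A + (B+C)$ to get that $A+B+C$ has small ordinary covering number, of order $\a^{-1}\log(2/(\beta\gamma))$ roughly; but to control the Fourier coefficients $\widehat{X * C}(\chi)$ we instead build $X$ probabilistically. Take $X$ to be a random subset of $\Gr$ where each element is included independently with probability $p \asymp \a^{-1} N^{-1} \log^2(2/(\a\gamma))$ (so $\E|X| \asymp \a^{-1}\log^2(2/(\a\gamma))$), then condition on the events (i) $A+B+C+X = \Gr$ and (ii) $|\widehat{X*C}(\chi)| \le \sqrt{\a/8}\,|X|\,\|C\|_1$ for all $\chi \ne 1$. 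Event (ii) is a standard large-deviation / Chernoff bound on each Fourier coefficient of the random set $X$ convolved with the fixed function $C$, union-bounded over the $\le N$ characters, using that $\|C\|_1 = |C| = \gamma N$ and the hypothesis $\a \ge N^{-2/3}\log^2 N$ to make the deviation probability beat $1/N$; event (i) is the statement that the "bad" uncovered set stays empty, which one arranges by first removing a near-perfect greedy cover of density $1 - o(1/\text{(polylog)})$ via \eqref{f:cov_AB_E} and then mopping up the residual tiny set with the random $X$ (each point of the residual is covered with overwhelming probability because the residual set is small and $p$ is large enough, cf.\ the reasoning behind \eqref{f:covering_f_eps_trivial}). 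Putting these two phases together — a deterministic greedy phase of size $O(\a^{-1}\log(2/(\a\gamma)))$-ish reaching density $1 - \delta$ with $\delta$ a suitable small power, then a random phase of size $O(\a^{-1}\log^2(2/(\a\gamma)))$ that is simultaneously Fourier-pseudorandom and finishes the cover — and bounding the total, gives \eqref{f:cov_ABC} after collecting the logarithmic factors.

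The main obstacle I expect is the second phase of \eqref{f:cov_ABC}: one must simultaneously guarantee that the \emph{single} random set $X$ both completes the cover and is Fourier-flat, and the two requirements pull in opposite directions on the size of $X$ (larger $X$ helps covering but makes the Fourier tail bound weaker per unit, while the union bound over $N$ characters forces $|X|$ to be at least polylogarithmic). Balancing the Chernoff deviation $\exp(-c \e^2 |X| \|C\|_1^2 / (N \|C\|_2^2))$-type bound against $1/N$, while keeping $|X|$ only $\a^{-1}\,\mathrm{polylog}$, is the delicate calculation — it is exactly here that the lower bound hypothesis $\a \ge N^{-2/3}\log^2 N$ is consumed, and getting the exponents of the logarithms to land at $\log^2\frac{2}{\a\gamma}\bigl(\log\frac{2}{\beta} + \log^2\frac{2}{\a\gamma}\bigr)$ rather than something worse requires care in how the greedy and random phases are apportioned.
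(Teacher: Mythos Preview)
Your instinct to invoke the complement/universality viewpoint is exactly right, but the ``more concretely'' greedy argument you then run is not a valid implementation of it, and the plan as written does not prove \eqref{f:cov_AB_E} or \eqref{f:cov_AB}.

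\medskip

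\textbf{The greedy gap.} Averaging $|(A+B+x)\cap E_i|$ over all $x\in\Gr$ gives exactly $|A+B|\,|E_i|/N$, so the best shift removes at most a $|A+B|/N$ fraction per step. Iterating yields $\cov(A+B;E)\le \tfrac{N}{|A+B|}\log|E|+1$, which is nothing more than the generic bound \eqref{f:basic_bounds} applied to the set $A+B$. In particular, for $E=\Gr$ and constant $\beta$ this gives $\a^{-1}\log N$, not $\a^{-1}\log(1/\beta)=O(\a^{-1})$. Your claimed per--step removal rate $\a|B|/|B-E_i|$ is (i) not justified by ``translating $B$ by all of $E_i$ lands in $B-E_i$'' and (ii) in any case \emph{smaller} than $\a$, so even if true it would only make the iteration longer. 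The final ``normalizing $|E|\le|B-E|$'' step does not repair this: you need $\log(|B-E|/|B|)$, not $\log|E|$, and those differ by exactly the missing $\log|B|$.

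\medskip

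\textbf{What actually works for \eqref{f:cov_AB_E}.} Set $\Omega=(A+B)^c$ and $k=\cov(A+B;E)-1$. The discussion after \eqref{f:Omega_formulae} gives the relative universality $\Omega^{k}-\Delta_{k}(E)=\Gr^{k}$. Now apply the generalized triangle inequality \eqref{f:gen_triangle_S} with $W=\Omega^k$, $Y=E$, $Z=B$:
\[
|B|\,N^{k}=|B|\,|\Omega^{k}-\Delta_{k}(E)|\le |\Omega^{k}\times E-\Delta_{k+1}(B)|\le |\Omega-B|^{k}\,|E-B|\,.
\]
The crucial structural fact you never use is that $\Omega-B$ is \emph{disjoint from} $A$ (if $\omega-b=a$ then $\omega=a+b\in A+B$, impossible), so $|\Omega-B|\le N(1-\a)$. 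Hence $(1-\a)^{k}\ge |B|/|B-E|$, and since $1-\a<e^{-\a}$ this forces $k\le \a^{-1}\log(|B-E|/|B|)$. That is the whole proof; the role of $B$ enters through the expansion bound on $|\Omega-B|$, not through any averaging in the greedy step.

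\medskip

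\textbf{The plan for \eqref{f:cov_ABC}.} Your two--phase greedy--plus--random construction has a structural problem: the Fourier constraint in \eqref{def:covering_f_eps} is on the \emph{entire} set $X$, so a deterministic greedy component $X_1$ can contribute $|\FF{X_1*C}(\chi)|$ as large as $|X_1|\,|C|$ and destroy flatness unless $|X_1|\le\sqrt{\a/8}\,|X|$; but then the random part must do essentially all the covering, and a uniformly random set covers $\Gr$ only once $|X|\gg \a^{-1}\log N$ (union bound over $N$ points), reproducing the trivial \eqref{f:covering_f_eps_trivial} rather than \eqref{f:cov_ABC}. The paper does not construct $X$ at all: it argues by contradiction. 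Assuming $k$ is large, $\Omega=(A+B+C)^c$ is (statistically) $k$--universal; for a random $X$ of size $\asymp k/\log(1/\beta)$ the set $\Omega_X$ is still roughly $k/|X|$--universal, whence $|\Omega_X-B|\gg N$, and then the disjointness $(A+C)\cap(\Omega_X-B-X)=\emptyset$ is analyzed on the Fourier side, splitting over $\Spec_{\sqrt{\a/8}}(C)$ and using the randomness of $X$ only on the (bounded--size) spectrum. This is where the hypothesis $\a\ge N^{-2/3}\log^2 N$ is spent, and it avoids the global union bound over all characters that forces $\log N$ in your plan.
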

\begin{proof}
    Let us start with \eqref{f:cov_ABC} since to it is harder  to prove and instead of $\cov_{C,\sqrt{\a/8}} (A+B+C)$ we consider the simpler quantity $\cov_{} (A+B+C)$. 
    Put $\Omega = (A+B+C)^c$.
    Further, let $k=\cov (A+B+C)-1$ and 
    let $X \subseteq \Gr$ be a set of size $ck/\log(1/\beta)$ which we will define later. 
    Here $c>0$ is an absolute small constant. 
    Thanks to \eqref{f:KK_inclusion}, we have 
\begin{equation}\label{f:universal_ABCX}
    (A+C) \cap (\Omega_X - B - X)  = \emptyset \,.
\end{equation}
    We know that $\Omega$ is  a $k$--universal set. 
    Then it is easy to see that the set $\Omega_X$ is $k_*:=[k/|X|]$--universal set. 
    Using \eqref{f:universal_ABCX} and taking the absolute constant $c$ to be sufficiently small, we get 
\begin{equation}\label{tmp:12_02_2024}
    |\Omega_X - B| \ge N \beta^{1/k_*} \ge N/2 \,.
\end{equation}
    Let $T=\Omega_X - B$. 
    Applying 
    the Fourier transform, we rewrite property \eqref{f:universal_ABCX} as 
\begin{equation}\label{tmp:12_02_2024_1}
    \frac{|A||C||T||X|}{N} \le N^{-1} \sum_{\chi \neq 1} |\FF{A} (\chi)| |\FF{C} (\chi)| |\FF{T} (\chi)| |\FF{X} (\chi)|
\end{equation}
\[
    \le 
    N^{-1} \sum_{\chi \notin \Spec_\eps (C)} |\FF{A} (\chi)| |\FF{C} (\chi)| |\FF{T} (\chi)| |\FF{X} (\chi)|
    +
    N^{-1} \sum_{\chi \in \Spec'_\eps (C)} |\FF{A} (\chi)| |\FF{C} (\chi)| |\FF{T} (\chi)| |\FF{X} (\chi)|
\]
\[
    = \sigma_1+\sigma_2 \,,
\]
    where $\eps \in (0,1]$ is a certain parameter. 
    Using the Parseval identity \eqref{F_Par} and estimate \eqref{tmp:12_02_2024},  we get 
\[
    \sigma_1 < \eps |C| |X| \sqrt{|A||T|} \le \frac{|A||C||T||X|}{2N} \,,
\]
    where we have taken $\eps = \sqrt{\a/8}$ to satisfy the last inequality. 
    In a similar way, 
    applying the Parseval identity again, one has $t:= |\Spec_\eps (C)| \ll (\a\gamma)^{-1}$. 
    To estimate $\sigma_2$ we choose our set $X$  randomly.
    By the  standard Chernoff--type inequality  see, e.g., \cite[Lemma 3.2]{alon2007large}, one has for any $|X| \le N^{2/3}$ that 
\[
    |\FF{X} (\chi)| \ll \sqrt{|X|} \log t \,,
\]
    where $\chi$ runs over the set $\Spec'_\eps (C)$.
    We have the condition $|X| \le N^{2/3}$, since otherwise our estimate \eqref{f:cov_ABC}  immediately follows  from the simple bound 
    \eqref{f:covering_f_eps_trivial} (which is a direct consequence  of  \cite[Lemma 3.2]{alon2007large}) 
    and our assumption $\a \ge N^{-2/3} \log^2 N$. 
    Thus, as above 
\[
    \sigma_2 \ll \sqrt{|X|} \log t \cdot |C| \sqrt{|A||T|} \le \frac{|A||C||T||X|}{2N} \,, 
\]
    provided $|X| \gg \a^{-1} \log^2 t$. 
    Returning \eqref{tmp:12_02_2024_1}, we obtain a contradiction and it means that 
\[
    \frac{\cov(A+B+C)}{\log(1/\beta)}
    \ll 
    \frac{k}{\log(1/\beta)} \ll  |X| \ll \a^{-1} \log^2 t
    \ll 
    \a^{-1} \log^2 (1/\a\gamma) 
    \,.
\]
Thus, we have obtained an upper bound for $\cov(A+B+C)$ which is even better than \eqref{f:cov_ABC}. 
But it is easy to check  that our construction gives 
a similar 
estimate for $\cov_{C,\sqrt{\a/8}} (A+B+C)$. 
Indeed, put $k=\cov_{C,\sqrt{\a/8}} (A+B+C)-1$ as above and let us obtain a good upper bound for $k$.
First of all, we know that for $|X| \gg \a^{-1} \log^2 t$ one has $|\FF{X*C} (\chi)| \le \eps |X| |C|$, $\forall \chi\neq 1$ thanks to our construction. Secondly, our set $X$ is a random one, thus it is taken with probability close to one, and hence $|\Omega^{k-1} -\Delta_{k-1} (\Omega)| \ge 2^{-1} N^{k-1}$, say. 
Here $\Omega = (A+B+C)^c$ as above. 
Applying \eqref{f:gen_triangle}, we get  
\begin{equation}\label{tmp:12_02_2024_2}
    |\Omega-B| \gg N \beta^{1/k_*} \gg N 
\end{equation}
    for a sufficiently small constant $c>0$. 
    Unfortunately, to get an analogue of \eqref{tmp:12_02_2024} we need to replace the set $\Omega$ with the set $\Omega_X$ in \eqref{tmp:12_02_2024_2}, and the later is not $k_*$--universal set in general (due to the fact that the set of shifts $X$ is rather specific). 
    Nevertheless,  take another random set $X'$, $|X'| = k_*$, $k_* |X| \le k$ and then in view of  
    \begin{equation}\label{tmp:12_02_2024_3}
    |\FF{X+X'} (\chi)|  = |\FF{X*X'} (\chi)| \ll \sqrt{|X||X'|} \log^2 t \le \eps |X||X'|\,, \quad \quad \forall 
    \chi \in \Spec'_\eps (C) \,,
    \end{equation}
    we derive that with high probability  $\Omega_{X+X'} \neq \emptyset$ 
    thanks to 
    the definition of the set $\Omega$ and the quantity $\cov_{C,\sqrt{\a/8}} (A+B+C)$. 
    Here we have used the fact that $|X||X'| \gg k$ and if the second inequality in \eqref{tmp:12_02_2024_3} does not hold, 
    then we obtain \eqref{f:cov_ABC} immediately. 
    In particular, for 
    any random set 
    $X'=\{x'_1,\dots, x'_{k_*}\}$ there is $z\in \Gr$ such that $z+x'_1, \dots, z+x'_{k_*} \in \Omega_X$ and thus $|\Omega^{k_*-1}_X -\Delta_{k_*-1} (\Omega_X)| \gg N^{k_*-1}$.
    It follows that $|\Omega_X-B| \gg N$ and we can repeat the argument above.

    Now
    to obtain \eqref{f:cov_AB} we use the previous argument with 
    $X=C=\{0\}$. 
    In particular, $\Omega = (A+B)^c$. 
    Then by estimate \eqref{tmp:12_02_2024} with $k_*=k := \cov(A+B) -1$ one has 
\begin{equation}\label{tmp:15.02_1}
    |\Omega - B| \ge N \beta^{1/k} > N (1-k^{-1} \log (1/\beta)) \,,
\end{equation}
    and the last quantity is greater than $|N| - |A|$, provided
\[
    \cov(A+B) =  k + 1 > \a^{-1} \log (1/\beta) + 1 \,.
\]

    Finally, 
    to get \eqref{f:cov_AB_E} we use  an analogue of inequality  \eqref{tmp:15.02_1} as well as estimates \eqref{f:universal_addition}, \eqref{f:gen_triangle}.
    Namely, for $k := \cov (A+B; E) -1$, taking into account estimate \eqref{f:gen_triangle_S}, 
    as well as the argument after \eqref{f:Omega_formulae}, we have the following 
\[
    |B| N^{k} = |B| |\Omega^{k} - \Delta_{k-1} (E)| \le |\Omega^{k} \times E - \Delta_k (B)| \le |\Omega-B|^{k} |E-B|  \,,
\]
    and thus
\[
     |\Omega-B| \ge N \left( |B|/|B-E| \right)^{1/k} \,.
\]
    It follows that 
\[
    \cov (A+B; E) =  k + 1 \le \a^{-1} 
    \log (|B-E|/|B|) 
    + 1 \,.
\]
This completes the proof.
$\hfill\Box$
\end{proof}

\bp

Clearly, estimate \eqref{f:cov_AB} is  
tight 
up to some constants. 
Indeed,  let $A=B$ be an arithmetic progression, $|A|=\a N$, then $\cov(A)\ge (2\a)^{-1}$ and, consequently, the dependence on $\a$ is correct.
On the other hand, if $B$ is any set such that $|B|=1$ and $A$ is a random set, then in view of \eqref{f:basic_bounds} (also, consult Example \ref{exm:random}) we 
see 
that  \eqref{f:cov_AB} is optimal. 
A similar construction works for estimate \eqref{f:cov_ABC}.
%
%
%
Also, notice that the proof of inequalities \eqref{f:cov_AB_E}, \eqref{f:cov_AB}
says that, actually,  $A+B+X = \Gr$ for all typical $X$, $|X| \gg \a^{-1} \log (1/\beta)$.


\bp

The argument above gives us an upper bound for the covering numbers of $A$ in terms of $A_B$. 
Notice that the condition $A_B \neq \emptyset$ in \eqref{f:conseq_sums1} means that $B$ belongs to $A^{|B|}-\Delta_{|B|} (\Gr)$ and the assumption $A+B \neq \Gr$ in \eqref{f:conseq_sums2} is equivalent to say that $B$ is contained in $\Gr \setminus (x-A)$ for some $x\in \Gr$.

\begin{corollary}
    Let $A \subseteq \Gr$ be a  set.
    Then 
\begin{equation}\label{f:conseq_sums1}
    \cov (A) \le \min_{B \subseteq \Gr ~:~ A_B \neq \emptyset}\,  \frac{N}{|A_B|} \log \frac{N}{|B|} + 1 \,,
\end{equation}
    and 
\begin{equation}\label{f:conseq_sums2}
    \cov (\Omega(A)) \le \min_{B \subseteq \Gr ~:~ A+B \neq \Gr}\, \frac{N}{N-|A+B|} \log \frac{N}{|B|}  + 1 \,.
\end{equation}
    In particular, if for a certain $B \subseteq \Gr$, $|B| = \beta N$ one has $|A+B|\le (1-\eps)N$, then
\begin{equation}\label{f:conseq_sums3}
     \cov (\Omega(A)) \le \eps^{-1} \log (1/\beta) + 1 \,.
\end{equation}
\label{c:conseq_sums}
\end{corollary}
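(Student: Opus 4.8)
The plan is to deduce all three bounds from inequality \eqref{f:cov_AB} of Theorem \ref{t:cov_ABC}, combined with two elementary facts already recorded in the paper: the monotonicity $E \subseteq A \Rightarrow \cov(A) \le \cov(E)$ (if $E+X=\Gr$ then $A+X=\Gr$), and the basic inclusion \eqref{f:KK_inclusion}, namely $A_X - X \subseteq A$ for any set $X$.

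For \eqref{f:conseq_sums1} I would fix $B$ with $A_B \neq \emptyset$. By \eqref{f:KK_inclusion} we have $A_B - B \subseteq A$, hence $\cov(A) \le \cov(A_B - B) = \cov(A_B + (-B))$. Now apply \eqref{f:cov_AB} with the sets $A_B$ (of density $|A_B|/N$) playing the role of ``$A$'' and $-B$ (of density $\beta := |B|/N$) playing the role of ``$B$''; since $\log(1/\beta) = \log(N/|B|)$ this yields $\cov(A) \le \frac{N}{|A_B|}\log\frac{N}{|B|} + 1$, and taking the minimum over admissible $B$ gives \eqref{f:conseq_sums1}.

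For \eqref{f:conseq_sums2} I would fix $B$ with $A+B \neq \Gr$. By the first formula of \eqref{f:Omega_formulae} one has $\Omega(A+B) = \Omega(A)_B$, so $\Omega(A)_B = (A+B)^c$ is nonempty with $|\Omega(A)_B| = N - |A+B|$. Applying \eqref{f:KK_inclusion} to the set $\Omega(A)$ gives $\Omega(A)_B - B \subseteq \Omega(A)$, whence $\cov(\Omega(A)) \le \cov\big((A+B)^c + (-B)\big)$; then \eqref{f:cov_AB} applied to $(A+B)^c$ (of density $(N-|A+B|)/N$) and $-B$ (of density $\beta$) gives exactly the claimed estimate, and minimising over $B$ yields \eqref{f:conseq_sums2}. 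Finally, \eqref{f:conseq_sums3} is merely the specialisation to a $B$ with $|A+B| \le (1-\eps)N$, for which $N/(N-|A+B|) \le \eps^{-1}$ and $\log(N/|B|) = \log(1/\beta)$.

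Since the substantive work is already carried out in Theorem \ref{t:cov_ABC}, I do not expect a genuine obstacle here; the only points requiring care are the bookkeeping of which set plays the role of ``$A$'' (the one whose density sits in the denominator) versus ``$B$'' (the one inside the logarithm), and verifying the hypotheses $A_B \neq \emptyset$, respectively $A+B \neq \Gr$, which are precisely what guarantee that the relevant densities $|A_B|/N$ and $(N-|A+B|)/N$ are nonzero so that \eqref{f:cov_AB} may be invoked.
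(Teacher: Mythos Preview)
Your argument is correct and matches the paper's own proof essentially line for line: deduce \eqref{f:conseq_sums1} from the inclusion \eqref{f:KK_inclusion} together with \eqref{f:cov_AB}, obtain \eqref{f:conseq_sums2} by applying \eqref{f:conseq_sums1} to $\Omega(A)$ via the identity $\Omega(A)_B=\Omega(A+B)$ from \eqref{f:Omega_formulae}, and specialise to get \eqref{f:conseq_sums3}. The only difference is that you spell out the monotonicity step and the role assignment in \eqref{f:cov_AB} explicitly, which the paper leaves implicit.
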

\begin{proof}
    The first inequality follows from  \eqref{f:cov_AB} and the inclusion \eqref{f:KK_inclusion}. 
    The second bound is a consequence of the first one, it remains to notice that 
    $|\Omega(A)_B| = |\Omega(A+B)| = N - |A+B|$ thanks to \eqref{f:Omega_formulae}. 
    Formula \eqref{f:conseq_sums3} follows from \eqref{f:conseq_sums2}. 
This completes the proof.
$\hfill\Box$
\end{proof}

\bigskip 

From the estimate \eqref{f:conseq_sums3} it follows, in particular, that for $|A| \gg N$ from the condition $\cov (\Omega(A))$ tends to infinity, we have $|A-A|, |A+A| = (1-o(1)) N$.


\bp

In the next corollary we show that inequality \eqref{f:Ruzsa_intr} of Theorem \ref{t:Ruzsa_intr} can be generalized for some  subsets of $A-A$. 


\begin{corollary}
     Let $k$ be a positive integer, $A \subseteq \Gr$ be a set, and $|A|=\a N$.
     Put $D=A-A$. 
     Then for any $X\in A^k - \Delta_k (A)$ one has 
\begin{equation}\label{f:KK_cov}
    \cov(D_X) \le \frac{1}{\a^{}} \log \frac{N}{|A_X|} + 1\,.
\end{equation}
\label{c:KK_cov}
\end{corollary}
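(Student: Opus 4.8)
The plan is to locate inside $D_X$ a sumset $P+Q$ of two subsets of $\Gr$ to which estimate \eqref{f:cov_AB} of Theorem~\ref{t:cov_ABC} applies directly, and then to use that covering a larger set never requires more translates. Write $X=\{x_1,\dots,x_k\}$, so that $D_X=(D+x_1)\cap\dots\cap(D+x_k)$ and $A_X=(A+x_1)\cap\dots\cap(A+x_k)$, where $D=A-A$.

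The one thing to check is the inclusion
\[
    A_X-A\subseteq D_X \,.
\]
This is a direct unwinding of definitions: if $z\in A_X$ and $a\in A$, then for every $i\in[k]$ we have $z\in A+x_i$, hence $z-x_i\in A$, so $(z-a)-x_i=(z-x_i)-a\in A-A=D$, that is $z-a\in D+x_i$; since $i$ was arbitrary this gives $z-a\in D_X$.

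Since $X\in A^k-\Delta_k(A)$, the characterization of the higher difference set recalled in Section~\ref{sec:def} gives $A\cap A_X\neq\emptyset$, so in particular $A_X\neq\emptyset$ and $|A_X|>0$. Writing $A_X-A=(-A)+A_X$ and noting $|-A|=|A|=\a N$, I would apply \eqref{f:cov_AB} to the ordered pair of sets $-A$ (of density $\a$) and $A_X$ (of density $|A_X|/N$), obtaining
\[
    \cov(A_X-A)=\cov((-A)+A_X)\le\frac{1}{\a}\log\frac{N}{|A_X|}+1 \,.
\]
Combining this with $A_X-A\subseteq D_X$ and the monotonicity $\cov(D_X)\le\cov(A_X-A)$ yields \eqref{f:KK_cov}.

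The only genuinely delicate point is the order of the two summands when invoking \eqref{f:cov_AB}: that bound is asymmetric in its two arguments, so one must place $-A$, and not $A_X$, in the first slot in order to get the factor $\a^{-1}$, rather than $N/|A_X|$, in front of the logarithm. Everything else is routine.
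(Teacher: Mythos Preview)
Your proof is correct and follows exactly the paper's approach: establish the Katz--Koester inclusion $A_X-A\subseteq D_X$ and then apply estimate \eqref{f:cov_AB} of Theorem~\ref{t:cov_ABC}. You have simply spelled out the details that the paper leaves implicit (the verification of the inclusion, the nonemptiness of $A_X$, and the correct ordering of the summands in \eqref{f:cov_AB}).
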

\begin{proof}
    It is well--known \cite{Katz-Koester} and easy to see that $A_X-A \subseteq D_X$.
    After that apply estimate \eqref{f:cov_AB} of Theorem \ref{t:cov_ABC}.  
$\hfill\Box$
\end{proof}

\bp 

Using the same method, something can be said concerning higher sumsets (also, see Section \ref{sec:concluding}).

\begin{corollary}
    Let $A,B,C,D \subseteq \Gr$ be sets, $|A|=\a N$, $|B|=\beta N$, $C=\gamma N$, and $|D| = \d N$.
    Then 
\begin{equation}\label{f:Delta(D)}
    \cov (A\times B + \D_2 (C+D)) \le \frac{1}{\a \gamma} \log \frac{1}{\beta \d} +1 \,.
\end{equation}   
\label{c:Delta(D)}
\end{corollary}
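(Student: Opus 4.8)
The plan is to run the proof of inequality \eqref{f:cov_AB} of Theorem \ref{t:cov_ABC}, but now inside the group $\Gr^2$ (of cardinality $N^2$) and with a well--chosen pair of ``witness'' sets playing the roles of the two factors $A$ and $B$ there. Put $\mathcal{S} = A\m B + \Delta_2(C+D) \subseteq \Gr^2$. If $\cov(\mathcal{S}) = 1$ there is nothing to prove, because the right--hand side of \eqref{f:Delta(D)} is at least $1$; so assume $\cov(\mathcal{S}) \ge 2$, set $k = \cov(\mathcal{S}) - 1 \ge 1$, and let $\Omega = \mathcal{S}^c \subseteq \Gr^2$. By the basic identity \eqref{f:cov_un} (equivalently, by the discussion around \eqref{f:cov_Omega} applied in $\Gr^2$) the set $\Omega$ is a $k$--universal subset of $\Gr^2$.

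The only point requiring an idea is to represent $\mathcal{S}$ as a difference set $\mathcal{T}' - \mathcal{T}$ in $\Gr^2$ with $|\mathcal{T}'| = \a\g N^2$ and $|\mathcal{T}| = \b\d N^2$. I would take
\[
    \mathcal{T}' = \{ (c+a,\, c) ~:~ a\in A,\ c\in C \} \,,
    \qquad
    \mathcal{T} = \{ (-d,\, -d-b) ~:~ b\in B,\ d\in D \} \,.
\]
The maps $(a,c) \mapsto (c+a,c)$ and $(b,d)\mapsto(-d,-d-b)$ are injective (the first coordinate recovers $c$, resp.\ $d$, and then the second recovers $a$, resp.\ $b$), so $|\mathcal{T}'| = |A|\,|C| = \a\g N^2$ and $|\mathcal{T}| = |B|\,|D| = \b\d N^2$, and a one--line computation gives
\[
    \mathcal{T}' - \mathcal{T} = \{ (a+c+d,\, b+c+d) ~:~ a\in A,\ b\in B,\ c\in C,\ d\in D \} = A\m B + \Delta_2(C+D) = \mathcal{S} \,,
\]
since $c+d$ ranges over all of $C+D$ while $a,b$ vary freely.

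Granting this, the argument concludes as in Theorem \ref{t:cov_ABC}. Since $\mathcal{T}' - \mathcal{T} = \mathcal{S} = \Omega^c$, no element of $\mathcal{T}'$ can lie in $\Omega + \mathcal{T}$: an equality $t' = \omega + t$ with $\omega \in \Omega$, $t\in \mathcal{T}$ would force $\omega = t' - t \in \mathcal{S}$, which is impossible. Hence $|\Omega + \mathcal{T}| \le N^2 - |\mathcal{T}'| = (1 - \a\g)N^2$. On the other hand, since $\Omega$ is $k$--universal in $\Gr^2$, the expansion estimate \eqref{f:universal_addition} (applied with ground group $\Gr^2$ and $S = \mathcal{T}$) gives $|\Omega + \mathcal{T}| \ge N^2 (|\mathcal{T}|/N^2)^{1/k} = N^2 (\b\d)^{1/k}$. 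Comparing the two estimates, $(\b\d)^{1/k} \le 1 - \a\g < e^{-\a\g}$, whence $k < \frac{1}{\a\g}\log\frac{1}{\b\d}$ and therefore $\cov(\mathcal{S}) = k+1 \le \frac{1}{\a\g}\log\frac{1}{\b\d} + 1$, as required.

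The whole difficulty is concentrated in guessing the parametrization $\mathcal{T}',\mathcal{T}$; once it is in place the proof is the same one--variable mechanism as for \eqref{f:cov_AB} plus bookkeeping (injectivity of the two maps, the degenerate cases where some of $A,B,C,D$ is empty or $\cov(\mathcal{S})=1$, and the elementary bound $1-x<e^{-x}$). Obvious variants --- assigning $A$ and $C$, or $A$ and $D$, to $\mathcal{T}'$ and the complementary pair to $\mathcal{T}$ --- give analogous estimates with the parameters permuted (for instance $\mathcal{T}' = \{(d, b+d) : b\in B, d\in D\}$, $\mathcal{T} = \{(-(a+c), -c) : a\in A, c\in C\}$ yields $\cov(\mathcal{S}) \le (\b\d)^{-1}\log(1/(\a\g)) + 1$), so one is free to use whichever bound is smallest.
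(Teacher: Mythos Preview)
Your proof is correct and follows essentially the same route as the paper: write $A\times B + \Delta_2(C+D)$ as a sumset in $\Gr^2$ of two sets of densities $\alpha\gamma$ and $\beta\delta$ (your $\mathcal{T}'$ and $-\mathcal{T}$ are exactly the paper's $\mathcal{X}$ and $\mathcal{Y}$), and then invoke the bound \eqref{f:cov_AB}. The only cosmetic differences are that the paper simply cites Theorem~\ref{t:cov_ABC} instead of re-running the universality argument in $\Gr^2$, and there is a harmless slip in your injectivity check (for $\mathcal{T}'$ it is the \emph{second} coordinate that recovers $c$, not the first).
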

\begin{proof}
    We have 
\[
    A\times B + \D_2 (C+D) = \{ (a+c,c) ~:~ a\in A,\, c\in C\} + 
    \{(d,b+d) ~:~ b\in B,\, d\in D\} = \mathcal{X} + \mathcal{Y} \,.
\]
    Also, $|\mathcal{X}| = |A||C|$ and $|\mathcal{Y}| = |B||D|$. 
    Using Theorem \ref{t:cov_ABC} with the sets $\mathcal{X}, \mathcal{Y}$ and the group $\Gr^2$, we obtain the result. 
$\hfill\Box$
\end{proof}

\begin{question}
    Let $A\subseteq \Gr$, $|A| \gg N$ and $k\ge 2$ be an integer. Is it true that $\cov(A^k -\D_k (A)) \ll 1$? 
\end{question}

    Finally, we show that it is always possible to decrease the covering number of a set by adding an arithmetic progression. 

\begin{lemma}
    Let $A,B \subseteq \Gr$ be sets, $|B| = \beta N$ and $|B+B| \le K|B|$.
    Then 
\begin{equation}\label{f:union_cov}
    \cov(A\cup B) \ge 2^{-1} \min\left\{ \frac{1}{\beta K^3}, \max\left\{ \frac{\beta \cov (A)}{\log (2K^4)}, \frac{\cov(A)}{2K^4 \log (1/\beta)} \right\} \right\} \,. 
\end{equation}
    In particular, for any $A$ and an arbitrary $k_* \in [1,\cov(A)/\log \cov (A)]$ there is $B$ such that 
\begin{equation}\label{f:union_cov2}
    c k_* \le \cov(A\cup B) \le  C k_* \,,
\end{equation}
    where $c,C>$ are some absolute constants. 
\label{l:union_cov}
\end{lemma}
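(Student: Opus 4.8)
The plan is to relate $\cov(A\cup B)$ to the universality number of $B+X$ for an \emph{optimal} covering set $X$ of $A\cup B$, and then to exploit the small doubling of $B$ to show that $B+X$ cannot be very universal. First I would fix $X$ with $(A\cup B)+X=\Gr$ and $|X|=x:=\cov(A\cup B)$. From $(A+X)\cup(B+X)=\Gr$ we get $(B+X)^c\subseteq A+X$, so $\cov(A;(B+X)^c)\le x$, whence, by \eqref{f:cov_E_mult} and the basic formula \eqref{f:cov_un}, $\cov(A)\le x\cdot\cov((B+X)^c)=x(\un(B+X)+1)$. Thus the problem reduces to bounding $\un(B+X)$. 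If $B+X=\Gr$ then $\cov(B)\le x$, which contradicts $\cov(B)\ge 1/\beta$ as soon as $x<\tfrac1{2\beta K^3}$; so I may assume $B+X\ne\Gr$ and set $t:=\un(B+X)<\infty$. Since $B+X$ is $t$--universal, \eqref{f:gen_triangle} with $U=B+X$ and $S=-B$ gives $|B|N^{t-1}\le|B+B+X|^t\le(K|B||X|)^t$, i.e. $\beta\le(K\beta x)^t$; hence, provided $K\beta x<1$, one has $t\le \log(1/\beta)/\log(1/(K\beta x))$.

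Now \eqref{f:union_cov} follows by a short case split. If $x\ge\tfrac1{2\beta K^3}$ we are done, since then $x\ge\tfrac12\cdot\tfrac1{\beta K^3}$. Otherwise $x<\tfrac1{2\beta K^3}$, so $K\beta x<\tfrac1{2K^2}<1$ and the bound on $t$ applies; moreover $\log(1/(K\beta x))>\log(2K^2)$ while $\log(1/(K\beta x))\le\log(1/\beta)$ (because $Kx\ge1$), so $t+1\le 2\log(1/\beta)/\log(2K^2)$, hence $\cov(A)\le x(t+1)\le 2x\log(1/\beta)/\log(2K^2)$, i.e. $\cov(A\cup B)=x\ge\cov(A)\log(2K^2)/(2\log(1/\beta))$. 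It then remains to check that this last quantity dominates $\tfrac12$ of each of $\tfrac{\beta\cov(A)}{\log(2K^4)}$ and $\tfrac{\cov(A)}{2K^4\log(1/\beta)}$; after cancelling $\cov(A)$ this is just the elementary inequalities $\log(2K^2)\ge\tfrac1{2K^4}$ and $\log(2K^2)\log(2K^4)\ge(\log2)^2>1/e\ge\beta\log(1/\beta)$, valid for all $K\ge1$ and $\beta\in(0,1)$.

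For the ``in particular'' statement, given $k_*\in[1,\cov(A)/\log\cov(A)]$ I would choose $B$ with $|B+B|\le 2|B|$ and $|B|$ of order $N/k_*$; such a $B$ exists in an arbitrary finite abelian group — in the invariant--factor decomposition $\Gr\cong\Z_{n_1}\times\dots\times\Z_{n_r}$ one may take a full subgroup in the first few coordinates times an interval in one more coordinate — and for it $\cov(B)\ll N/|B|\ll k_*$. Then $\cov(A\cup B)\le\cov(B)\ll k_*$ gives the upper bound, while feeding $K=2$ and $\beta\gg 1/k_*$ into \eqref{f:union_cov} and using $\log k_*\le\log\cov(A)$ shows that both entries of the inner minimum are $\gg k_*$, so $\cov(A\cup B)\gg k_*$.

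The main obstacle I expect is calibrating the constants in the case analysis so that the estimate $x\ge\cov(A)\log(2K^2)/(2\log(1/\beta))$ genuinely dominates both pieces of the inner maximum — this rests on the numerical facts $\log2>\tfrac12$ and $(\log2)^2>1/e$ — together with the observation that when $\beta$ is close to $1$ (so $\log(1/\beta)$ is tiny and the second piece could blow up) the term $1/(\beta K^3)$ is automatically small, since $|B+B|\le K|B|$ with $|B|>N/2$ forces $K\ge 1/\beta$. A lesser nuisance is producing, in an arbitrary finite abelian group, small--doubling sets of essentially prescribed cardinality with controlled covering number.
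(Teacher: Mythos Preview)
Your proof is correct and in fact cleaner than the paper's. Both arguments start from the same observation — taking an optimal $X$ with $(A\cup B)+X=\Gr$ and passing to complements gives $\Omega_X\subseteq B+X$, equivalently $(B+X)^c\subseteq A+X$ — and both ultimately need an upper bound on $\un(B+X)$ (the paper works with the universality of $\Omega_X\subseteq B+X$, which is the same thing). The difference lies in how that bound is obtained.

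The paper proceeds by a packing argument: it packs disjoint translates $2B+z$ into $D=(B+X)^c$, producing a set $Z$ with $|Z|\ge N/(2K^4|B|)$ via Pl\"unnecke, sets $Y=Z+B$, notes $(Y+B)\cap\Omega_X=\emptyset$, and then invokes the expansion property of the $\lfloor(k-1)/(l-1)\rfloor$--universal set $\Omega_X$ against $B$ and $Y$ as in Theorem~\ref{t:cov_ABC}. The $K^3$ and $K^4$ in the statement come from $|3B-B|$ and $|2B-2B|$ respectively.

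You bypass the packing entirely: from \eqref{f:cov_E_mult} and \eqref{f:cov_un} you get $\cov(A)\le x(\un(B+X)+1)$ in one line, and then bound $\un(B+X)=t$ via \eqref{f:gen_triangle} with $S=-B$ together with the trivial $|B+B+X|\le|B+B|\cdot|X|\le K\beta Nx$. This uses only the single doubling constant $K$ in the core inequality $\beta\le (K\beta x)^t$, so your method actually yields sharper $K$--dependence than what is stated; you then simply check that your bound dominates the two stated quantities via the elementary inequalities $\log(2K^2)\ge 1/(2K^4)$ and $(\log 2)^2>1/e\ge\beta\log(1/\beta)$. The ``in particular'' part matches the paper's (the paper takes $B$ to be a difference set with $|B+B|\ll|B|$, $|B|\sim N/k_*$, and uses Theorem~\ref{t:Ruzsa_intr} for $\cov(B)\ll k_*$; your coset--times--interval construction achieves the same thing).
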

\begin{proof}
    Let $k=\cov (A)$, $\Omega = A^c$ and $\Omega' =  (A\cup B)^c \subseteq \Omega$. 
    Also, let $l=\cov(A\cup B)$. 
    We have for a certain $X$, $|X|=l$ that $\Omega'_X = \emptyset$. 
    It is easy to see that the last formula implies that $\Omega_X \subseteq B+X$. 
    We can assume that $l\le (2\beta)^{-1}$, since otherwise there is nothing to prove. 
    Putting  $D=(B+X)^c$, we see that $|D| \ge N/2$. 
    Consider a maximal disjoint family of sets  $2B+z$, $z\in Z$, belonging to the set $D$. By maximality, we have $D\subseteq (2B-2B) + Z$ and hence 
\begin{equation}\label{tmp:01.04_3}
    |Z| \ge \frac{|D|}{|2B-2B|} \ge  \frac{N}{2K^4|B|} 
\end{equation}
    thanks to inequality \eqref{f:Pl-R}. 
    It remains to check that $Z\neq \emptyset$. 
    But if $Z=\emptyset$, then for any $z\in \Gr$ one has $(2B+z) \cap (B+X) \neq \emptyset$ and therefore $|X|\ge \beta^{-1} K^{-3}$ by the Pl\"unnecke inequality. 
    Thus, we can assume that \eqref{tmp:01.04_3} takes place and 
    denoting $Y=Z+B$, $|Y|=|Z||B| \ge N/(2K^4)$,  we see that $(Y+B) \cap \Omega_X = \emptyset$. 
    But $\Omega_X$ is $[(k-1)/(l-1)]$--universal set and the argument of the proof of Theorem \ref{t:cov_ABC} gives us 
\[
    \frac{k}{2l} \le \left[ \frac{k-1}{l-1} \right] \le \max\{ \beta^{-1} \log (2K^4), 2K^4 \log (1/\beta) \} 
\]
    as required.

    It remains to get \eqref{f:union_cov2}.
    Take $B$ to be any difference set with $|B+B| \ll |B|$ and $|B| \sim N/k_*$.
    Then $\cov(A\cup B)\le \cov (B) \ll k_*$ by Theorem \ref{t:Ruzsa_intr}, say. 
    Using our assumption we see that $k_* \log k_* \ll k$ and hence by \eqref{f:union_cov} we obtain  $\cov(A\cup B) \gg \beta^{-1} \gg k_*$.
    This completes the proof.
$\hfill\Box$
\end{proof}

\section{Some examples and the sum--product phenomenon}
\label{sec:sum-product}

Now 
we 
give 
several 
examples of sets from $\F_p$ (and general abelian groups) having large covering numbers $\cov^{+}, \cov^\times$. 
We study these quantities simultaneously, thus some statements in  this 
section 
can be considered as part of the sum--product phenomenon. 
The first results in this direction were obtained in \cite{s_Fish}, where we derived (see \cite[Proposition 5.2]{s_Fish}) the following simple sum--product--type result 
\[
    \cov^{\times} (A-A) \le \cov^{+} (A) \,.
\]

Our  first example is well--known and uses the random construction. 

\begin{example}
\label{exm:random}
    Let $A\subseteq \Gr$ or $A\subseteq \F_p$ be a random set, each element of $A$ is taken independently at random with a certain probability $\d$.
    Then \cite[Theorem 4.1]{bollobas2011covering} the following holds $\cov^{+} (A) \sim \d^{-1} \log N$ and $\cov^{\times} (A) \sim \d^{-1} \log p$. 
    Generally, one can take a random subset of an interval $I$ in $\Gr$ and obtain a similar bound. 
    Thus, sets with small sumset can have very large covering number. 
    Finally, using the first  inequality from 
    \eqref{f:cov_A_Ac}
    one can see that $\cov^{+} (A+X), \cov^{\times} (AX)\ge (\d|X|)^{-1} \log p$ for any set $X\subseteq \Gr$. 
\end{example}

Unlike the previous example, the following set is deterministic and has large covering number. 

\begin{example}
\label{exm:R}
    Let $\Gr = \F_p$, where $p$ be a prime number and $\mathcal{R}$ be the set of quadratic residue in $\F_p$. 
    Then $\cov^{+} (\mathcal{R})$ is at least $\left(\frac{1}{2}+o(1) \right)\log_2 p$ see, e.g., \cite[Proposition 14]{SS_higher}. 
    The same method gives $\cov^{\times} (\mathcal{R}+s) \ge \left(\frac{1}{2}+o(1) \right)\log_2 p$ for any non--zero $s\in \F_p$. 
    In a similar way, it is easy to check that $\ov{\U}_k (\mathcal{R}) = 1-o(1)$ for $k=(1-o(1))\log_2 p$ and in view of 
    \eqref{f:un_upper} this estimate is optimal. 
\end{example}

Let us generalize the previous example. 

\begin{proposition}
    Let $p$ be a prime number, $A,B \subseteq \F_p$ be sets, $|A|=\a p$, $|B|=\beta p$ such that $A/B \neq \F_p$ and $s\in \F^*_p$.
    Then both 
    $\cov^{+} (\F_p \setminus (A/B))$ and  $\cov^{\times} (\F_p \setminus (A/B) + s)$ are 
    at least $\frac{\log p}{\log (1/\a\beta)} (1-o(1))$.\\ 
    Similarly, if $A+B\neq \F_p$, 
    then 
     $\cov^{+} (\F_p \setminus (A+B)^{-1})$ and $\cov^{\times} (\F_p \setminus ((A+B)^{-1}+s))$ 
    are both at least $\frac{\log p}{\log (1/\a\beta)} (1-o (1))$.
\label{p:R_gen}
\end{proposition}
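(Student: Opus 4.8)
The plan is to reduce all four claims to assertions about the invariant $\un$ and then establish the required universality by a character--sum count, so that the extremal case reproduces Example~\ref{exm:R}. By the basic identity \eqref{f:cov_un} one has $\cov^{+}(\F_p\setminus(A/B))=\un^{+}(A/B)+1$, and, since $\F^*_p$ is a cyclic group under multiplication, the analogous identity on $\F^*_p$ gives $\cov^{\times}(\F_p\setminus((A/B)+s))=\un^{\times}\big(((A/B)+s)\cap\F^*_p\big)+O(1)$ (the point $0$ is treated separately and contributes only $O(1)$, hence $o(1)$ after division by $\log p$); likewise for $(A+B)^{-1}$, where $(\F_p\setminus(A+B)^{-1})+s=\F_p\setminus((A+B)^{-1}+s)$. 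Thus I would reduce everything to showing that $A/B$ is $k$--universal with respect to $+$, that $(A/B)+s$ is $k$--universal with respect to $\times$, and similarly for $(A+B)^{-1}$ and its shift, for every $k\le(1-o(1))\frac{\log p}{\log(1/\a\beta)}$.

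For the first of these I would exploit that $A/B=\bigcup_{b\in B\setminus\{0\}}b^{-1}\cdot A$ is a union of many dilates of $A$. Put $r(y)=|\{b\in B\setminus\{0\}:by\in A\}|$, so that $\mathbf 1_{A/B}(y)=\mathbf 1_{\{r(y)\ge1\}}\ge r(y)/|B|$; then for any $x_1,\dots,x_k\in\F_p$
\[
    |\{s:\ x_j+s\in A/B\ \ \forall j\}|\ \ge\ \frac{1}{|B|^{k}}\sum_{b_1,\dots,b_k\in B\setminus\{0\}}\big|\{s:\ b_j(x_j+s)\in A\ \ \forall j\}\big|\,.
\]
Expanding the characteristic function of $A$ in the additive characters of $\F_p$, the diagonal frequency yields the main term $|A|^{k}|B|^{k}/p^{k-1}$, i.e.\ $\gg\a^{k}p$ after dividing by $|B|^{k}$, which is positive --- indeed $\gg1$ --- well beyond the target. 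The off--diagonal frequencies are the point of the averaging over $b_1,\dots,b_k\in B$: a frequency supported on $m\ge2$ coordinates produces a factor counting solutions of $\sum_{i}t_ib_i=0$ with $b_i$ ranging over $B$, and, after summing against $\prod|\FF{A}(t_i)|$ and applying Parseval, the whole off--diagonal mass drops below the main term precisely when $k\le(1-o(1))\frac{\log p}{\log(1/\a\beta)}$. In fact I would first dispose of the easy regime where $A/B$ is so large that the trivial bound $\cov\ge|\F_p|/|\F_p\setminus(A/B)|$ of \eqref{f:basic_bounds} already exceeds the target; in the remaining regime the complement of $A/B$ behaves like a coset of a proper subgroup --- the extremal example being $A,B$ unions of cosets of the group of quadratic residues --- and there the relevant exponential sums enjoy square--root cancellation by the Gauss/Weil bounds, so that the constant $\log(1/\a\beta)$ matches the quadratic--residue computation of Example~\ref{exm:R} exactly.

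The other three covering numbers I would handle by the same template. For $\cov^{\times}(\F_p\setminus((A/B)+s))$ one runs the count with the multiplicative characters of $\F^*_p$ in place of the additive ones: one needs $(A/B)+s$ to be multiplicatively $k$--universal, and it is the shift by $s\neq0$ that keeps $(A/B)+s$ from being a union of multiplicative cosets (note $A/B$ itself, being a union of dilates, may be a subgroup). For $(A+B)^{-1}$ one uses instead $\mathbf 1_{(A+B)^{-1}}(y)=\mathbf 1_{A+B}(y^{-1})\ge|A|^{-1}r_{A+B}(y^{-1})$ with $r_{A+B}(u)=|\{a\in A:u-a\in B\}|$; the resulting character sums are Kloosterman--type exponential sums, again bounded by Weil's theorem, with the same threshold $\frac{\log p}{\log(1/\a\beta)}$, and the additive versus multiplicative form only changes which group's characters are used. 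Finally, since $A/B\neq\F_p$ (resp.\ $A+B\neq\F_p$) the sets are proper and, $\a,\beta$ having positive density, $\log(1/\a\beta)=o(\log p)$, so the main terms dominate up to $k=(1-o(1))\frac{\log p}{\log(1/\a\beta)}$, which is exactly the claimed bound for all four covering numbers.

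The main obstacle will be the off--diagonal estimate with the sharp constant. A priori $A/B$ can carry Fourier coefficients as large as $\Omega(\sqrt p)$ --- it is a union of $\ge\beta p$ dilates of $A$, not a genuinely random set --- so no termwise bound on $|\FF{A/B}(\chi)|$ will do; one has to use the averaging over the dilation parameters (and over the inversion parameter in the $(A+B)^{-1}$ case) together with a square--root--cancellation input, while separately peeling off the regime where the set is large enough that the trivial covering bound already suffices. Calibrating the constant in the threshold to be exactly $\log(1/\a\beta)$, so that the statement becomes a clean generalization of the quadratic--residue case, is the delicate point.
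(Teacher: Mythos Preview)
Your reduction via \eqref{f:cov_un} to showing that $A/B$ (resp.\ $(A+B)^{-1}$, and their shifts) are $k$--universal is exactly right, and for the second half --- the $(A+B)^{-1}$ claims --- your plan matches the paper's: one expands the convolution $g=A*B$ in \emph{additive} characters and is left with Weil sums $\sum_z e_p\big(\sum_j r_j(z+x_j)^{-1}\big)$; the paper just notes (via the Wronskian) that the functions $(z+x_j)^{-1}$ are linearly independent, so the rational function is nonconstant and Weil gives $O(k\sqrt p)$, while Cauchy--Schwarz plus Parseval bounds $\sum_r|\widehat g(r)|\le p\sqrt{|A||B|}$. This yields the threshold $(\a\beta)^{k/2}\gg p^{-1/2}$, i.e.\ $k\le(1-o(1))\log p/\log(1/\a\beta)$.

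The gap is in the $A/B$ half. Expanding $A$ in \emph{additive} characters and averaging over the dilations $b_1,\dots,b_k\in B$ does not produce a Weil--type complete sum: after detecting the constraint $\sum_j t_jb_j=0$ by characters and summing, the expression collapses back to $\sum_s\prod_j f(s+x_j)$ with $f(y)=|\{(a,b):y=a/b\}|$, so nothing has been gained; and the crude bound ``$\le|B|^{k-1}$ solutions of a single linear relation'' loses far too much to hit the constant $\log(1/\a\beta)$. There is also no dichotomy forcing the complement of $A/B$ to behave like a subgroup coset in the ``remaining regime'' --- that step is simply false, and in any case unnecessary. The paper's point is that $f$ is a \emph{multiplicative} convolution, so one should expand in multiplicative characters: then $\widehat f(\chi)=\widehat A(\chi)\,\overline{\widehat B(\chi)}$, the inner sum is exactly $\sum_z\chi_1(z+x_1)\cdots\chi_k(z+x_k)$, and Weil's bound for multiplicative characters of $\prod_j(z+x_j)$ (distinct $x_j$) gives $\le(k-1)\sqrt p$ directly. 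Combined with $\sum_\chi|\widehat f(\chi)|\le(p-1)\sqrt{|A||B|}$ one gets the displayed inequality
\[
\sum_z f(z+x_1)\cdots f(z+x_k)>\frac{(|A||B|)^k}{(p-1)^{k-1}}-(k+1)\sqrt p\,(|A||B|)^{k/2}>0
\]
for $k\le(1-o(1))\log p/\log(1/\a\beta)$. The multiplicative--cover statements for $(A/B)+s$ and $(A+B)^{-1}+s$ then follow by the same template with $z\mapsto \lambda x_j-s$ in place of $z\mapsto z+x_j$; the nonzero shift $s$ is what makes the resulting polynomial (resp.\ rational function) non--degenerate.
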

\begin{proof}
    We start with the first part and 
    let us prove $k:=\cov^{+} (\Omega (A/B)) \ge \frac{\log p}{\log (1/\a\beta)} (1-o(1))$, the second inequality follows similarly.
    Let $f(x)$ counts the number of 
    solutions to $x=a/b$, where $a\in A$, $b\in B$.
    Using the multiplicative Fourier transform, the Parseval identity and Weil's bound for the sums over multiplicative characters, we get for different $x_j$, $j\in [k]$ that 
\[
    \sum_{z} f(z+x_1) \dots f(z+x_k) = \frac{1}{(p-1)^k} \sum_{\chi_1, \dots, \chi_k} \FF{f} (\chi_1) \dots \FF{f} (\chi_k) \sum_{z} \chi_1 (z+x_1) \dots \chi_k (z+x_k) 
\]
\begin{equation}\label{tmp:21.02_1}
    > 
    \frac{(|A||B|)^k}{(p-1)^{k-1}} - (k+1)\sqrt{p} (|A||B|)^{k/2} > 0 \,,
\end{equation}
    provided $k = \frac{\log p}{\log (1/\a\beta)} (1-o(1))$. 
    Inequality \eqref{tmp:21.02_1} implies that $(A/B)_{x_1,\dots,x_k} \neq \emptyset$.

    Now let us obtain the second part of our result and again it is suffices to estimate the quantity $k:=\cov^{+} (\Omega(A+B)^{-1})$, since  the second bound follows in a similar way.
    Putting $f=A*B$ and applying the additive Fourier transform, we obtain
\[
    \sum_{z} f( (z+x_1)^{-1}) \dots f( (z+x_k)^{-1}) = \frac{1}{p^k} \sum_{r_1, \dots, r_k} \FF{f} (r_1) \dots \FF{f} (r_k) \sum_{z} e(r_1 (z+x_1)^{-1} +  \dots + r_k (z+x_k)^{-1}) \,.
\]
    One can check (calculating the Wronskian, for example) that the functions $(z+x_1)^{-1}, \dots, (z+x_k)^{-1}$ are linearly independent. 
    After that we use the  Weil bound again 
    as well as 
    the previous argument. 
    In the case of $\cov^{\times} (\F_p \setminus ((A+B)^{-1}+s))$ our functions are $f((x_j z-s)^{-1})$ and, thus  we can actually repeat our approach. 
This completes the proof.
$\hfill\Box$
\end{proof}

\begin{question}
    Let $\Gamma \le \F^*_p$ be a multiplicative subgroup, $|\G| < p^{1-\eps}$, where $\eps>0$ is any constant. 
    Is it true that $\cov^{+} (\Gamma) \gg_\eps \frac{p\log |\G|}{|\G|}$? 
\end{question}

Now let us consider the complements to the sets from Proposition \ref{p:R_gen}.

\begin{question}
    Let $p$ be a prime number and $A,B \subseteq \F_p$ be sets such that $|A|,|B| \gg p$ and $|(A/B)^c| \gg p$, $|(A-B)^c| \gg p$. Is it true that $\cov^{+} (A/B)$ and  $\cov^{+} ((A-B)^{-1})$ are both $\Omega(\log p)$? For $A=B$? 
\label{q:cov+(A/B)}
\end{question}


We give a partial answer to the question above.

\begin{proposition}
    Let $p$ be a prime number and $A,B \subseteq \F_p$ be sets such that $|A|,|B| \gg p$ and 
    $|(AB)^c| \gg p$, $|(A+B)^c| \gg p$. 
    Then 
    $\cov^{+} (A \cdot_{\Omega(1)} B)$ and 
    $\cov^{+} ((A+_{\Omega(1)} B)^{-1})$ are 
    both 
    $\Omega(\log p)$. 
\label{p:R_gen_opp}
\end{proposition}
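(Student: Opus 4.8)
The plan is to derive this from the combination of Corollary~\ref{c:conseq_sums} (specifically formula~\eqref{f:conseq_sums3}) and Theorem~\ref{t:croot2012some}. The key structural fact in~\eqref{f:cov_un} is that $\cov(\Omega(S)) = \un(S) + 1$, so a lower bound of $\Omega(\log p)$ for a covering number $\cov^{+}$ of a set of the form $\Omega(E) = E^c$ is exactly a statement that $E = (AB)$ (or $(A+B)^{-1}$, after a change of variable) has $A+_\eps B$-type popular structure forcing $(E^c)^c = E$ to contain a large Bohr set's worth of ``missing'' points. Concretely: we want $\cov^{+}(A\cdot_\eps B) = \cov(\Omega(A\cdot_\eps B)) + \text{const}$ to be $\Omega(\log p)$, and by~\eqref{f:conseq_sums3} it suffices to find a set $B'\subseteq\F_p$, $|B'| = \beta' p$, with $(A\cdot_\eps B) + B' \le (1-\delta')p$ for constants $\beta', \delta' = \Omega(1)$.

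First I would reduce the multiplicative case to the additive case by the logarithm map: $AB$ in $\F_p^*$ is $\exp(\log A + \log B)$, and the popular product set $A\cdot_\eps B$ corresponds to the popular sumset of $\log A$ and $\log B$ inside $\F_p^* \cong \Z/(p-1)\Z$. Thus both clauses of the proposition are instances of the single statement: if $|A|, |B| \gg |\Gr|$ and $|(A+B)^c| \gg |\Gr|$ in an abelian group $\Gr$, then $\cov^{+}\big((A+_{\Omega(1)} B)^{?}\big) = \Omega(\log|\Gr|)$, where $?$ is either the identity or inversion — and inversion is handled by noting $\cov^{+}$ is invariant under the involution $x\mapsto x^{-1}$ composed appropriately, or more carefully by running the whole argument with $f((x_jz-s)^{-1})$-style substitutions as in the proof of Proposition~\ref{p:R_gen}; the point is that $((A+B)^{-1})^c = ((A+B)^c)^{-1}$ up to the zero element, so an inverted missing-set is again a missing-set of comparable density.

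The core step is then: by Theorem~\ref{t:croot2012some}, since $|A|,|B| \gg |\Gr|$ and $|(A+B)^c| \gg |\Gr|$, the complement of the popular sumset $A +_{\delta} B$ (for a suitable fixed $\delta = \Omega(1)$) contains a shift $t + \mathcal{B}$ of a Bohr set $\mathcal B = \mathcal B(\Gamma,\eps)$ of dimension $O(1)$ and radius $\Omega(1)$. By~\eqref{f:Bohr_size} we get $|\mathcal B| \ge (\eps/2\pi)^{|\Gamma|}|\Gr| = \Omega(|\Gr|)$. Now take $B' = \mathcal B$ (or a translate). Since $\mathcal B$ is a Bohr set, $\mathcal B + \mathcal B \subseteq \mathcal B(\Gamma, 2\eps)$ and more usefully, because $t+\mathcal B$ is disjoint from $A +_\delta B$ and $\mathcal B - \mathcal B \subseteq \mathcal B(\Gamma,2\eps)$ is itself only a slightly larger Bohr set, one controls $(A+_\delta B) + \mathcal B$: it misses a positive-density set. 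The cleanest route is to apply~\eqref{f:conseq_sums3} directly with $A\leftarrow A+_\delta B$ and $B\leftarrow \mathcal B$, checking $|(A+_\delta B) + \mathcal B| \le (1-\delta')p$. For this I would use that $A+_\delta B$ is disjoint from the shifted Bohr set $t+\mathcal B$, and that adding $\mathcal B$ to a set disjoint from $t+\mathcal B$ still misses the ``core'' $t + \mathcal B(\Gamma, \eps/2)$, which has density $\Omega(1)$ by~\eqref{f:Bohr_size} again — because if $y \in A+_\delta B$ and $b, b' \in \mathcal B(\Gamma,\eps/2)$ with $y + b = t + b'$, then $y = t + (b' - b) \in t + \mathcal B(\Gamma,\eps)$, contradicting disjointness. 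Hence $(A+_\delta B) + \mathcal B(\Gamma,\eps/2)$ avoids $t + \mathcal B(\Gamma, \eps/2)$, giving $\delta' = |\mathcal B(\Gamma,\eps/2)|/p = \Omega(1)$ and $\beta' = \Omega(1)$.

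Plugging into~\eqref{f:conseq_sums3}: $\cov(\Omega(A+_\delta B)) \le (\delta')^{-1}\log(1/\beta') + 1 = O(1)$ — wait, that is an \emph{upper} bound, which is the wrong direction. The resolution, and the point I expect to be the main obstacle, is that the lower bound $\Omega(\log p)$ must come from the \emph{other} side: we are bounding $\cov^{+}$ of the popular \emph{sumset} $A\cdot_\eps B$ itself (not its complement). Its complement $\Omega(A\cdot_\eps B)$ contains $t+\mathcal B$, a shift of a Bohr set, so $\Omega(A\cdot_\eps B)$ contains a long arithmetic-progression-like structure; but $\cov^{+}$ of a set whose \emph{complement} is large and structured is itself large only if the set is ``spread out,'' which is false in general. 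So instead the right mechanism is: $A\cdot_\eps B$ (as a popular sumset of dense sets) is itself additively \emph{rich} — it contains, or its iterated sumsets rapidly fill $\F_p$ — no: the correct observation from the Kelley--Meka / Croot circle is that $A\cdot_\eps B$, being a popular sumset, has $\un(A\cdot_\eps B) = \Omega(\log p)$ because its complement misses a Bohr set and a set missing a Bohr set is far from universal — equivalently, apply~\eqref{f:cov_un}: $\cov^{+}(A\cdot_\eps B) = \un(\Omega(A\cdot_\eps B)) + 1$, and $\Omega(A\cdot_\eps B) \supseteq t + \mathcal B$ which is a set containing a shift of a Bohr set of density $\Omega(1)$; such a set \emph{is} highly universal — a Bohr set $\mathcal B$ of bounded dimension and radius $\Omega(1)$ satisfies $\un(\mathcal B) = \Omega(\log p)$ since $|\mathcal B^{k-1} - \Delta_{k-1}(\mathcal B)|$ fills $\Gr^{k-1}$ for $k$ up to $\Omega(\log(1/\text{density}))/\Omega(1) = \Omega(\log p)$, by a direct check (or by~\eqref{f:un_upper} being tight for Bohr sets as noted in Example~\ref{exm:R}). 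Since $\un$ is monotone under inclusion, $\un(\Omega(A\cdot_\eps B)) \ge \un(t+\mathcal B) = \un(\mathcal B) = \Omega(\log p)$, hence $\cov^{+}(A\cdot_\eps B) = \Omega(\log p)$. The inversion case $(A+_\eps B)^{-1}$ follows identically once one observes its complement is the inverse of $(A+_\eps B)^c \supseteq t + \mathcal B$, and the inverse of a shifted Bohr set still contains a large structured (e.g. generalized-arithmetic / multiplicative-Bohr) set of density $\Omega(1)$ with $\un = \Omega(\log p)$ — this last point, transferring universality through $x\mapsto x^{-1}$, is the genuinely delicate step and I would handle it by the same Weil-bound character-sum computation used in the proof of Proposition~\ref{p:R_gen} rather than by a structural argument.
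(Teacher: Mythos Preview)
Your final strategy --- Theorem~\ref{t:croot2012some} places a shifted Bohr set inside the complement of the popular set, then via \eqref{f:cov_un} and monotonicity of $\un$ one reduces to a universality lower bound proved by the Weil-type estimates of Proposition~\ref{p:R_gen} --- is exactly the paper's. But two points in your write-up need fixing, and the paper's single extra observation removes the step you flag as ``genuinely delicate''.

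First, the logarithm reduction is a wrong turn: $\cov^{+}$ is the \emph{additive} covering number in $\F_p$, and the discrete log destroys additive structure, so nothing about $\log A +_\eps \log B$ in $\Z/(p-1)\Z$ controls $\cov^{+}(A\cdot_\eps B)$. What Theorem~\ref{t:croot2012some} (applied in the multiplicative group) actually gives is that $(A\cdot_\eps B)^c$ contains a \emph{multiplicative} coset $s\cdot \mathcal B$ of a \emph{multiplicative} Bohr set --- not an additive translate $t+\mathcal B$. Your subsequent claim that ``a Bohr set $\mathcal B$ of bounded dimension and radius $\Omega(1)$ satisfies $\un(\mathcal B)=\Omega(\log p)$'' is false for \emph{additive} Bohr sets (a short arithmetic progression is not even $2$--universal), and Example~\ref{exm:R} concerns quadratic residues, a multiplicative object. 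So the multiplicative case is not yet handled.

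The paper closes both gaps in one stroke: a Bohr set of radius $\eps$ contains the (group-operation) combination of two half-radius copies, namely
\[
    \mathcal B(d,\eps)+s \;\supseteq\; \big(\mathcal B(d,\eps/2)+s\big) + \mathcal B(d,\eps/2)
\]
in the additive case, and the analogous product containment in the multiplicative case. Thus the complement of the popular set contains a genuine sum $C+D$ (respectively product $C\cdot D$) of two sets each of density $\Omega(1)$. Now Proposition~\ref{p:R_gen} applies \emph{verbatim}: its first clause gives $\cov^{+}\big((C\cdot D)^c\big)=\Omega(\log p)$ for the $A\cdot_\eps B$ case, and its second clause gives $\cov^{+}\big(((C+D)^{-1})^c\big)=\Omega(\log p)$ for the $(A+_\eps B)^{-1}$ case. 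Monotonicity of $\cov$ finishes. No fresh Weil computation is needed --- the character-sum work is already packaged inside Proposition~\ref{p:R_gen}, and the inversion you call delicate is exactly what that proposition was set up to absorb.
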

\begin{proof}
    Consider the case of $\cov^{+} ((A+_{\Omega(1)} B)^{-1})$, say, another bound follows similarly. 
    By assumption $|A|,|B| \gg p$ and $|(A+B)^c| \gg p$. 
    Hence Theorem \ref{t:croot2012some} implies that the complement to $A+_{\Omega(1)} B$ contains a translation $\mathcal{B}(d,\eps)+s$ of a Bohr set $\mathcal{B}(d,\eps)$, where $d=O(1)$ and $\eps\gg 1$. 
    In view of estimate \eqref{f:Bohr_size}, we see 
    that $|\mathcal{B}(d,\eps)| \gg p$.
    Clearly, $\mathcal{B}(d,\eps)+s$ contains $(\mathcal{B}(d,\eps/2)+s) + \mathcal{B}(d,\eps/2)$. Using Proposition \ref{p:R_gen}, we get 
\begin{equation}\label{tmp:01.04_2}
    \cov^{+} ((A+_{\Omega(1)} B)^{-1}) \ge \cov^{+} ( (((\mathcal{B}(d,\eps/2)+s) + \mathcal{B}(d,\eps/2))^{-1})^c ) \gg \log p
\end{equation}
as required. 
$\hfill\Box$
\end{proof}

\bp

A 
natural approach to 
Question \ref{q:cov+(A/B)} 
is to try to choose a random subset $S$ of $A+B$
(we consider the question about $\cov^{+} ((A+B)^{-1})$, say) with small Wiener norm  \eqref{def:Wiener} and repeat the proof of Proposition \ref{p:R_gen}. 
However, generally speaking, this is impossible, since the Wiener norm of an arbitrary  set tends to infinity \cite{Konyagin_Littlewood}, \cite{MPS_Littlewood}.
Anyway, 
the proof of  Proposition \ref{p:R_gen} gives us 
the following connection between the Wiener norm  of a set and its covering number.

\begin{corollary}
    Let be a prime number, $A \subseteq \F_p$ be a set such that $|A| = \a p$, $\|A\|_W = K$ and $s\in \F_p$ be an arbitrary non--zero number. Then $\cov^{+} (\Omega(A^{-1}))$ and  $\cov^\times (\Omega (A^{-1}+s))$ are both at least $\frac{\log p}{2\log (K/\a)} (1-o (1))$.
\end{corollary}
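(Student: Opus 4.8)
The plan is to deduce both estimates directly from the method of Proposition \ref{p:R_gen}, with the Parseval step there replaced by the trivial Wiener--norm identity $\sum_{r}|\FF{A}(r)| = Kp$ (this is \eqref{def:Wiener} with $N=p$). By the basic formula \eqref{f:cov_un}, together with its multiplicative analogue, it suffices to show that $A^{-1}$ is a $k$--universal subset of $(\F_p,+)$ and that $A^{-1}+s$ is a $k$--universal subset of $(\F^*_p,\times)$ for every $k \le \frac{\log p}{2\log(K/\a)}(1-o(1))$; then $\cov^{+}(\Omega(A^{-1})) = \un(A^{-1})+1$ and likewise in the multiplicative case, and we are done.

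For the additive part, I would fix distinct $x_1,\dots,x_k\in\F_p$ and count the $z\notin\{-x_1,\dots,-x_k\}$ for which $(z+x_j)^{-1}\in A$ for all $j\in[k]$; since $(z+x_j)^{-1}\in A$ is equivalent to $z+x_j\in A^{-1}$, positivity of this count is exactly additive $k$--universality of $A^{-1}$. The count equals $\sum_{z}\prod_{j=1}^{k}A\big((z+x_j)^{-1}\big)$ over $z$ avoiding the $\le k$ poles. Expanding each factor by Fourier inversion $A(y)=p^{-1}\sum_{r}\FF{A}(r)e(ry)$, the diagonal $r_1=\dots=r_k=0$ contributes $p^{-k}|A|^k(p-k)=\a^k p - O(k)$, while each other term contributes $p^{-k}\FF{A}(r_1)\cdots\FF{A}(r_k)$ times the complete rational sum $\sum_{z}e\big(\sum_{j}r_j(z+x_j)^{-1}\big)$. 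Since the functions $(z+x_1)^{-1},\dots,(z+x_k)^{-1}$ are linearly independent (verify the Wronskian, as in Proposition \ref{p:R_gen}), for any $(r_1,\dots,r_k)\neq\vec 0$ the exponent is a non--constant rational function with at most $k$ simple poles and not of Artin--Schreier type, so Weil's bound gives $\big|\sum_z e(\cdots)\big|\ll k\sqrt p$. Summing absolute values and inserting $\sum_r|\FF{A}(r)|=Kp$ bounds the whole off--diagonal contribution by $\ll kK^k\sqrt p$, whence the count is at least $\a^k p - O(kK^k\sqrt p)$. This is positive as soon as $(K/\a)^k\ll \sqrt p/k$, i.e. $k\log(K/\a)\le\tfrac12\log p-O(\log k)$; since $K\ge\a$ always (the $r=0$ term of \eqref{def:Wiener}), this holds for $k=\frac{\log p}{2\log(K/\a)}(1-o(1))$ in the relevant range $\log(Ck)=o(\log p)$, giving the first bound.

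For the multiplicative part one repeats the argument with $z+x_j$ replaced by $zx_j-s$: if $x_1,\dots,x_k\in\F^*_p$ are distinct and $s\neq0$, the poles $s/x_j$ are distinct, the functions $(zx_j-s)^{-1}$ are again linearly independent, the same Weil estimate applies, and positivity of $\sum_z\prod_j A\big((zx_j-s)^{-1}\big)$ yields $z\in\F^*_p$ with $zx_j\in A^{-1}+s$ for all $j$, i.e. multiplicative $k$--universality of $A^{-1}+s$. I do not expect a genuine obstacle here, since this really is the proof of Proposition \ref{p:R_gen} with one bound swapped. The only point requiring a little care --- in both settings --- is checking that $\sum_j r_j(z+x_j)^{-1}$ (respectively $\sum_j r_j(zx_j-s)^{-1}$) is non--constant for every nonzero coefficient vector and lies outside the exceptional case of Weil's theorem, so that the $\sqrt p$--saving with a polynomial loss in $k$ is legitimate; this is the same verification as in Proposition \ref{p:R_gen}. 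The remaining bookkeeping --- the $O(\log k)$ term that controls the $o(1)$ in the exponent, and the exclusion of the finitely many $z$ where a denominator vanishes --- is routine.
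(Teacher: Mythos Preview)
Your proposal is correct and is exactly the argument the paper has in mind: the corollary is stated immediately after Proposition~\ref{p:R_gen} with the remark that ``the proof of Proposition~\ref{p:R_gen} gives us'' this result, and the only modification is to replace the Cauchy--Schwarz/Parseval bound $\sum_r|\FF{f}(r)|\le p\sqrt{|A||B|}$ (for $f=A*B$) by the hypothesis $\sum_r|\FF{A}(r)|=Kp$ coming from \eqref{def:Wiener}. Your Fourier expansion, Weil estimate on the rational exponential sums, and the resulting inequality $(K/\a)^k\ll \sqrt{p}/k$ reproduce the computation in \eqref{tmp:21.02_1} verbatim with these substitutions.
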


The next example is of a different type and provides a wide  family of sets with large $\cov^\times$.

\begin{example}
\label{exm:A-A}
    Let $\Gr = \F_p$, $A\subseteq \F_p$, $|A|=\a p$ be an arbitrary set such that $A-A \neq \F_p$ and consider $(A-A)^c$. We know that $A-A$ contains a multiplicative shift of any collection of points $x_1,\dots,x_k \in \F_p$, where $k\le \frac{\log (p-1)}{\log (1/\a)}$ 
    see, e.g., \cite[Corollary 6]{SS_higher}. 
    Thus $\cov^{\times} ((A-A)^c) \ge \frac{\log (p-1)}{\log (1/\a)}$ (for sets $A$ with small sumsets  consult \cite{CRS}). 
    For example, $\cov^{\times} ([p/3, 2p/3]) \gg \log p$ and 
    the last fact 
    was known before see, e.g.,  \cite[Section 5]{s_Fish}.
    In view of Lemma \ref{l:Fish_covering} one can see that in general $\cov^{\times} ((A+B)^c)$ can be small. 
\end{example}

The later example allows us to say something about the covering numbers of sets avoiding solutions to some linear equations.

\begin{corollary}
    Let $c\in \F_p$ be a nonzero number, $A \subseteq \F_p$, $|A|=\a p$ be a set having no solutions to the equation $x-y=cz$, where $x,y,z\in A$. 
    Then $\cov^\times (A) = \Omega (\log p /\log (1/\a))$. 
\end{corollary}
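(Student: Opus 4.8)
The plan is to connect the linear-equation hypothesis to the set $(A-A)^c$ via the structure of the forbidden equation, and then invoke Example~\ref{exm:A-A}. First I would observe that the equation $x-y=cz$ with $x,y,z\in A$ being unsolvable means precisely that $c\cdot A$ is disjoint from $A-A$; equivalently $c\cdot A \subseteq (A-A)^c$, and after the multiplicative rescaling $A \subseteq c^{-1}\cdot (A-A)^c$. Since $\cov^\times$ is invariant under multiplicative shifts (the analogue of the identity $\un(A)=\un(\lambda\cdot A)$ used throughout, applied to $\cov$ via \eqref{f:cov_un}), we have $\cov^\times(A) \ge \cov^\times(c\cdot A) \ge \cov^\times((A-A)^c)$ using monotonicity $\cov(A;E)\le\cov(A';E)$ for $A'\subseteq A$ — wait, here the inclusion goes $c\cdot A\subseteq (A-A)^c$, so I want $\cov^\times$ to be monotone decreasing under taking supersets, i.e.\ $\cov^\times(S)\ge\cov^\times(S')$ when $S\subseteq S'$, which is immediate from the definition since a covering set for the smaller set covers the larger one too.

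Then the second step is simply to apply Example~\ref{exm:A-A}, which states that $\cov^\times((A-A)^c) \ge \frac{\log(p-1)}{\log(1/\a)}$ whenever $A-A\neq\F_p$; here $|A|=\a p$, so this gives exactly the claimed bound $\cov^\times(A) = \Omega(\log p/\log(1/\a))$. One should check that the hypotheses of Example~\ref{exm:A-A} are met: we need $A-A\neq \F_p$, which follows because $c\cdot A$ is a nonempty subset of $(A-A)^c$ (as $A\neq\emptyset$), so $(A-A)^c\neq\emptyset$.

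I expect essentially no obstacle here — the whole content is the translation ``no solution to $x-y=cz$'' $\iff$ ``$c\cdot A\cap(A-A)=\emptyset$'' together with the scaling-invariance and monotonicity of $\cov^\times$, after which Example~\ref{exm:A-A} does all the work. The only mild subtlety to state carefully is that the multiplicative covering number is unchanged by multiplicative dilation $A\mapsto \la\cdot A$ for $\la\in\F_p^*$, since multiplication by $\la$ is an automorphism of the multiplicative structure of $\F_p$; this is the multiplicative counterpart of the additive-shift invariance noted for $\un$ and hence for $\cov$ in Section~\ref{sec:covering}. With that in hand the corollary is a two-line deduction.
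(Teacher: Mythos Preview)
Your argument is correct and is exactly the deduction the paper has in mind: the corollary is placed immediately after Example~\ref{exm:A-A} with no separate proof, so the intended reasoning is precisely the observation that the hypothesis forces $c\cdot A\subseteq (A-A)^c$, followed by monotonicity and dilation-invariance of $\cov^\times$, and then the bound from Example~\ref{exm:A-A}. Your brief tangle over the direction of monotonicity resolves correctly: from $S\subseteq S'$ any $X$ with $S\cdot X=\F_p^*$ also satisfies $S'\cdot X=\F_p^*$, hence $\cov^\times(S)\ge\cov^\times(S')$.
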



    Finally, let us show that there are universal sets of special type. We will need this example in the proof of Theorem \ref{t:table_intr}  from the introduction.

\begin{example}
\label{exm:universal_sumset}
    Given $1\le k \ll \log N$ let us construct a $k$--universal set $U \subset \Gr:= \Z/N\Z$ such that 
\[
    U=A+B\,, \quad \quad |U^c| \gg N \quad \quad 
    \mbox{and} \quad \quad |A|,\, |B| \gg N \,. 
\]
    Let $d_0 = \lceil \sqrt{N} \rceil$ and let $d$ be a prime $d_0 \le d \le 2d_0$, $d$ does not divide $N$. 
    It is easy to see that such $d$ exists.
    Now choose a set $S_0$, $|S_0|=cd$, where $c>0$ is a sufficiently small constant  such that $S_0$ 
    is an arbitrary 
    $k' = \Omega(k)$--universal subset of $\Z/d\Z$. 
    Such sets exist thanks to Example \ref{exm:random} or see paper \cite{alon2009discrete}. 
    Put $S = S_0 \cup (S_0 +d)$. 
    Further, any $x\in \{0,1, \dots,N-1\}$ can be written as $x=y+dz$, where $y,z\in \{0,1,\dots,d-1\}$.
    Given any $x_1,\dots,x_{k'} \in \Gr$ consider them as elements of $\{0,1, \dots,N-1\}$ and therefore we can write $x_j = y_j+dz_j$, where $y_j,z_j\in \{0,1,\dots,d-1\}$, $j\in [k']$. 
    Since $S_0$ is $k'$--universal set of $\Z/d\Z$, it follows that there are $w_1, w_2 \in  \{0,1, \dots,p-1\}$ such that $y_j+w_1 \in S$ and $z_j+w_2 \in S$ for all $j\in [k']$. 
    Put $w=w_1+dw_2$. 
    Thus $x_j + w \in S + d \cdot S$, $j\in [k']$ and hence $S + d \cdot S$ is $k'$--universal set in $\Gr$.
    Now let $P=[c_* N]$, where $c_*>0$ is another sufficiently small constant and choose $s\in \Gr$ in a random way.
    Then with high probability the set $Q = d^{-1}(P+s) \cap P$ has size $\Omega (|P|^2/N) = \Omega (N)$. 
    Put $A=S \cup d \cdot Q$, $B = d \cdot S \cup Q$. 
    Then $|A|,|B| \gg N$. 
    Finally, the set $U:=A+B$ is $\Omega(k)$--uniform and 
\begin{equation}\label{tmp:01.04_1}
    |U| \le |S+d\cdot S| + 2|S+P| + |P+P+s| \le |S|^2 + 4|P| + 2d < 3N/4 
\end{equation}
    as required. Here we have chosen our constants $c,c_*$ to be sufficiently small.

    Notice that it is possible to 
    take 
    $B=A$ or  $B=-A$.
    Indeed, in the first case  just choose $A=B= A_0 (S) := S \cup d \cdot Q_* \cup d \cdot S \cup Q_*$, where $Q_* = P \cap d^{-1} (P+s) \cap d (P+t)$ and  the shifts $s,t$ are taken in a random way. It gives us $|Q_*| \gg N$ and an analogue of \eqref{tmp:01.04_1} holds. 
    In the second case let $A=A_0 (S')$, $B=-A_0 (S')$, where $S'= \pm S_0 \cup \pm (S_0 +d)$, $S'=-S'$. 
\end{example}

Now we are ready to proof Theorem \ref{t:table_intr}.

\bp 

\begin{proof}
We need to say something about any element of our table, which corresponds to the matrix $2\times 8$. Denote this matrix by $M$. 
Theorem \ref{t:cov_ABC} allows us to fill cells 
$M_{11}, M_{13}, M_{52}$ by $O(1)$. 
Further, 
$M_{21}$ follows from  Lemma \ref{l:Fish_covering},  $M_{16}$, $M_{18}$, and $M_{15}$, $M_{17}$ follow from Propositions \ref{p:R_gen}, \ref{p:R_gen_opp}. 
Example \ref{exm:A-A} corresponds to cells $M_{22}$ and $M_{24}$. Similarly, one can have everything 
(i.e. $O(1)$ or $\Omega(L)$) 
in $M_{23}$ and $M_{27}$ thanks to Example \ref{exm:A-A} due to $[p/3,2p/3]$ is a sumset. Finally, the remaining cells $M_{12}, M_{14}, M_{26}$ are all $\forall_{O(1)}$. Indeed, by Theorem \ref{t:croot2012some} the popular sums of these sets contain a shift of a Bohr set $\mathcal{B}(d,\eps)$, $|\mathcal{B}(d,\eps)| \gg p$ (additive or multiplicative).
Thus, using Theorem \ref{t:cov_ABC} and calculation in \eqref{tmp:01.04_2}, we obtain that the corresponding covering numbers are $O(1)$. 
On the other hand, the usual sumsets in $M_{12}, M_{14}, M_{26}$ and in $M_{28}$ 
can have $O(1)$ or $\Omega(L)$ as the covering numbers. 
Indeed, consider, for example, $M_{12}$. Then $\cov^{+}((A-A)^c)$ is, basically, $\un^{+} (A-A)$ and in the light of Example \ref{exm:universal_sumset} it can be $\Omega(L)$ and if $A$ is a sufficiently small  arithmetic progression, then $\cov^{+} ((A-A)^c) \ll 1$. 
The same is true for $M_{14}, M_{26}$. 
As for $M_{28}$ 
one has 
\[
\cov^\times (((A+B)^{-1})^c) = \un^\times ((A+B)^{-1}) +1 = \un^\times (A+B) + 1
\]
and thus we arrive at the cell $M_{24}$. 
This completes the proof of Theorem \ref{t:table_intr}.
$\hfill\Box$
\end{proof}

\section{Concluding remarks}
\label{sec:concluding}

   Universal sets also have other very strong properties. For example, let $U \subseteq \Gr$ be a (one--dimensional) $k$--universal set. 
    Then by estimate \eqref{f:universal_addition} we know that $U$ has the repulsion property, namely, for 
    an arbitrary set $S$ one has 
\begin{equation}\label{f:universal_addition'}
    |U+S| \ge N \cdot \left( \frac{|S|}{N} \right)^{1/k} \,.
\end{equation}
    Now let $m$ be a positive integer, $U_1,\dots, U_m$ be $k$--universal sets and let us consider the following multi--dimensional set  $\mathcal{U}:= U_1 \times \dots \times U_m  
    \subseteq \Gr^m$. 
    One can check 
    that 
    \[
        \mathcal{U}^{k} - \D_k (\Gr^m) = (U^k_1 - \Delta_k (\Gr)) \times \dots \times (U^k_m - \Delta_k (\Gr)) = \Gr^{mk}
    \]
    and thus $\mathcal{U}$ is a $k$--universal set as well. 
    One can rewrite the last formula as 
\begin{equation}\label{f:un_Cartesian_product}
    \un (U_1 \times \dots \times U_m, \Gr^m) = \min_{j\in [m]} \un (U_j, \Gr) \,.
\end{equation}
    In particular, we have an analogue of inequality \eqref{f:universal_addition'} for the set $\mathcal{U}$. 
    It is easy to see that bound \eqref{f:universal_addition'}
    can be generalized and refined for the sets of the form $\Delta_m (S)$. 
    Indeed,  
    let $k-m<k_*\le k$ be the maximal  number not exceeding $k$ such that $m$ divides $k_*$. 
    Write $d=k_*/m \in \Z$.
    Then as in \eqref{f:gen_triangle}, we get 
\[
    |S| N^{k_*-1} = |S| |U^{k_*-1} - \Delta_{k_*-1} (U)| \le |U^{k_*} - \Delta_{k_*} (S)| \le |U^m-\Delta_m (S)|^{d}   \,.
\]
    Thus if $|S| = \sigma N$, then 
\[
    |U^m-\Delta_m (S)|\ge N^m \sigma^{1/d} 
    > N^m (1-d^{-1} \log (1/\sigma) ) \,.
\]

This argument gives us the following result about universal sets and hence about the complements of sets with the large covering number, see examples of Section \ref{sec:sum-product}.
In particular, it is possible to obtain an analogue of Corollary \ref{c:conseq_sums} for higher sumsets.

\begin{corollary}
    Let $U\subseteq \Gr$ be a $k$--universal set, and $S\subseteq \Gr$ be an arbitrary set, $|S|=\sigma N$.
    Then 
\begin{equation}\label{f:universal_addition_Um}
|U^m-\Delta_m (S)| 
    > N^m \left( 1- \frac{m}{k-m+1} \log (1/\sigma) \right) \,.
\end{equation}
    In particular, for any $\mathcal{A} \subseteq \Gr^m$, $|\mathcal{A}| = \a N^m$ one has 
\[
    (\mathcal{A} + \Delta_m (S)) \cap U^m \neq \emptyset \,,
\]
    provided $\a \ge \frac{m}{k-m+1} \log (1/\sigma)$. 
\label{c:universal_addition_Um}
\end{corollary}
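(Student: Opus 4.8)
The plan is to make precise the two elementary estimates sketched just before the statement. First I would pass to a multiple of $m$: let $k_*$ be the largest multiple of $m$ with $k_* \le k$, so that $k - m < k_* \le k$ and $d := k_*/m$ is a positive integer satisfying $d \ge (k-m+1)/m$, equivalently $d^{-1} \le \frac{m}{k-m+1}$. Since $U$ is $k$--universal and $k_* \le k$, it is also $k_*$--universal, hence $U^{k_*-1} - \D_{k_*-1}(U) = \Gr^{k_*-1}$ has size exactly $N^{k_*-1}$.

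Next I would apply the generalized triangle inequality \eqref{f:gen_triangle_S} of Lemma \ref{l:gen_triangle_S} exactly as in the derivation of \eqref{f:gen_triangle}, but with $S$ (rather than $U$) occupying the single extra $\D_1$--slot; this gives
\[
|S|\, N^{k_*-1} = |S|\,|U^{k_*-1} - \D_{k_*-1}(U)| \le |U^{k_*} - \D_{k_*}(S)| \,.
\]
Then I would split the $k_*$ coordinates into $d$ consecutive blocks of length $m$: if $y \in U^{k_*} - \D_{k_*}(S)$ is witnessed by $s \in S$ (so $y_i + s \in U$ for all $i$), the restriction of $y$ to each block lies in $U^m - \D_m(\{s\}) \subseteq U^m - \D_m(S)$, whence under the identification $\Gr^{k_*} \cong (\Gr^m)^d$ we get $U^{k_*} - \D_{k_*}(S) \subseteq (U^m - \D_m(S))^d$ and so $|U^{k_*} - \D_{k_*}(S)| \le |U^m - \D_m(S)|^d$; this is the same block argument that proves \eqref{f:AAk_mult}. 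Combining the two displays and taking $d$--th roots yields $|U^m - \D_m(S)| \ge \sigma^{1/d} N^m$, and then $\sigma^{1/d} = e^{-d^{-1}\log(1/\sigma)} > 1 - d^{-1}\log(1/\sigma) \ge 1 - \frac{m}{k-m+1}\log(1/\sigma)$ (for $0<\sigma<1$, using $e^{-t}>1-t$ for $t>0$; the degenerate case $\sigma=1$ being immediate) delivers \eqref{f:universal_addition_Um}.

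For the ``in particular'' clause I would observe that $(\mathcal{A} + \D_m(S)) \cap U^m \neq \emptyset$ is equivalent to $\mathcal{A} \cap (U^m - \D_m(S)) \neq \emptyset$; since \eqref{f:universal_addition_Um} gives $|U^m - \D_m(S)| > N^m\big(1 - \frac{m}{k-m+1}\log(1/\sigma)\big) \ge N^m - |\mathcal{A}|$ under the hypothesis $\a \ge \frac{m}{k-m+1}\log(1/\sigma)$, we conclude $|\mathcal{A}| + |U^m - \D_m(S)| > N^m$, so the two sets must intersect. I do not expect a genuine obstacle here; the only points needing care are bookkeeping ones --- matching the indices in Lemma \ref{l:gen_triangle_S} so that $S$ (and not $U$) sits in the extra slot, and noting that $k$--universality descends both to $k_*$--universality and to the restrictions of $U^{k_*}-\D_{k_*}(S)$ to blocks of $m$ coordinates.
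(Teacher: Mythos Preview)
Your proposal is correct and follows essentially the same route as the paper: choose the largest multiple $k_*\le k$ of $m$, apply the triangle inequality \eqref{f:gen_triangle_S} to get $|S|\,N^{k_*-1}\le |U^{k_*}-\D_{k_*}(S)|$, use the block inclusion $U^{k_*}-\D_{k_*}(S)\subseteq (U^m-\D_m(S))^d$, and finish with $\sigma^{1/d}>1-d^{-1}\log(1/\sigma)$ together with $d^{-1}\le m/(k-m+1)$. You additionally spell out the ``in particular'' clause via a pigeonhole count, which the paper leaves implicit.
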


    Thanks to formulae \eqref{f:characteristic2} of Lemma \ref{l:characteristic} we have, in particular, that 
\[
    \sum_{x\in U-S} |U-(S\cap (U-x))| = |U^2 - \D_2 (S)| > N^2 \left( 1- \frac{2 \log (1/\sigma)}{k-1} \right) \,,
\]
and 
\[
    \sum_{x\in U-S} |U-(S\cap (U-x))|^2 \ge \sum_{x\in U-S} |U^2-\D_2(S\cap (U-x))| = |U^3 - \D_3 (S)| >
    N^3 \left( 1- \frac{3 \log (1/\sigma)}{k-2} \right) \,.
\]
Thus, for any sufficiently dense set $S\subseteq \Gr$ there are many shifts of the universal set $U$ such that $U-(S\cap (U-x))$ is close to the 
entire
group $\Gr$.

\bp 

It would be interesting to find further properties of universal sets.




\bibliographystyle{abbrv}

\bibliography{bibliography}{}

\end{document}